\documentclass[12pt]{amsart}
\usepackage{amssymb}
\usepackage{amsmath}
\usepackage{bbm}
\usepackage[dvips]{graphicx}
\usepackage{hyperref}
\usepackage[capitalize,nameinlink,noabbrev,nosort]{cleveref}
\usepackage[dvipsnames]{xcolor}
\usepackage{todonotes}
\usepackage{enumitem}

\numberwithin{equation}{section}

\newtheorem{thm}{Theorem}[section]
\newtheorem{prop}[thm]{Proposition}
\newtheorem{cor}[thm]{Corollary}
\newtheorem{eg}[thm]{Example}
\newtheorem{lem}[thm]{Lemma}
\theoremstyle{definition}
\newtheorem{defn}[thm]{Definition}
\newtheorem{rem}[thm]{Remark}

\newcommand{\tcr}[1]{\textcolor{red}{#1}}
\newcommand{\bZ}{\mathbb{Z}}

\newcommand{\ds}{\displaystyle}
\newcommand{\1}{\bullet}
\newcommand{\2}{{\scriptstyle \Box}}
\newcommand{\0}{\cdot}
\newcommand{\aver}[1]{\langle #1 \rangle}
\DeclareMathOperator{\wt}{wt}

\title
[A disordered two-species ASEP on a torus]
{Combinatorics of a disordered two-species ASEP on a torus}

\author{Arvind Ayyer}
\address{Arvind Ayyer, Department of Mathematics, 
Indian Institute of Science, Bangalore  560012, India.}
\email{arvind@iisc.ac.in}

\author{Philippe Nadeau}
\address{Philippe Nadeau, Univ Lyon, CNRS, Universit\'e Claude Bernard Lyon 1, UMR 5208, Institut Camille Jordan, 43 blvd. du 11 novembre 1918, F-69622 Villeurbanne cedex, France}
\email{nadeau@math.univ-lyon1.fr}

\date{\today}

\begin{document}

\begin{abstract}
We define a new disordered asymmetric simple exclusion process (ASEP) with two species of particles, first-class particles labelled $\1$ and second-class particles labelled $\2$, on a two-dimensional toroidal lattice.  
The dynamics is controlled by particles labelled $\1$, which only move horizontally, with forward and backward hopping rates $p_i$ and $q_i$ respectively if the $\1$ is on row $i$. 
The motion of particles labelled $\2$ depends on the
relative position of these with respect to $\1$'s, and can be both
horizontal and vertical. We show that the stationary weight of any configuration is proportional to a monomial in the $p_i$'s and $q_i$'s.
Our process projects to the disordered ASEP on a ring, and so explains combinatorially the stationary distribution of the latter first derived by Evans (Europhysics Letters, 1996). 
We compute the partition function, as well as densities and currents of $\1$'s and $\2$'s in the stationary state.
We observe a novel mechanism we call the \emph{Scott Russell phenomenon}: the current of $\2$'s in the vertical direction is the same as that of $\1$'s in the horizontal direction.
\end{abstract}

\keywords{exclusion process, multispecies, two dimensions, stationary distribution, partition function, density, current, set partitions}
\subjclass[2010]{05A15, 60C05, 60K35, 05A18}

\maketitle

\section{Introduction}
\label{sec:intro}

The asymmetric simple exclusion process (ASEP) is an important model in nonequilibrium statistical physics. Over the last few decades, the one-dimensional ASEP on a finite one-dimensional lattice with open boundaries~\cite{dehp} has been intensively studied by mathematicians due to the simple yet nontrivial combinatorial structure of its stationary distribution; see for example~\cite{duchi-schaeffer-2005,corteel-williams-2010}. The stationary distribution of the one-dimensional ASEP with periodic boundary conditions is uniform, but the story gets interesting if there are two species of particles, one faster and one slower. 
These are called first and second class particles respectively, and were first considered in the study of shock measures in the single species ASEP on $\mathbb{Z}$ in~\cite{andjel-bramson-liggett-1988}; see~\cite[Part III, Chapter 2]{liggett-sis-1999} for more details.
In this case too, the stationary distribution has an elegant combinatorial structure~\cite{djls-1993}. The combinatorics of the closed two-species has also been understood from different points of view~\cite{angel-2006,FM07,ayyer-linusson-2014, mandelshtam-2020, martin-2020}.
Another combinatorial generalization of the one-dimensional single-species ASEP is where the particles have disordered rates, i.e., the $k$'th particle hops forward (resp. backward) with rate $p_k$ (resp. $q_k$). 
This was first studied by Spitzer~\cite[Section 5a]{spitzer-1970} in the symmetric case ($p_k = q_k$) and later generalised by Evans~\cite{evans-1996}. It is the combinatorial structure of this disordered ASEP that we will unravel here.

We present an exact solution of a two-dimensional exclusion process with closed boundaries (i.e. on a discrete $L \times n$ torus) with two kinds of particles. To the best of our knowledge, when there are multiple species of particles and the rates are disordered, no formulas for finite systems exist in the literature. This is the first two-dimensional disordered exclusion process whose stationary distribution is understood exactly.

The \emph{first-class} particles are denoted $\1$, and the \emph{second-class} particles are denoted $\2$. There is one first-class particle per row and these only move horizontally, the particle on the $k$'th row moving forward with rate $p_k$ and backward with rate $q_k$. However, these particles dictate the motion of the second-class particles which move both horizontally and vertically; see \cref{sec:torus} for the precise definition. The process can also be viewed isomorphically as a one-dimensional multispecies ASEP (\cref{sub:colored}) by projection. 

More interestingly, it can also be formulated as a process on  set partitions (\cref{sub:partitions}) with $n$ blocks on $L$ elements, with some specific marking.

For this two-dimensional ASEP,
we give an explicit formula for the stationary distribution (\cref{thm:ssrefined}) and the nonequilibrium partition function (\cref{thm:pf}). In particular, we show that the stationary probability of any configuration is proportional to a monomial in the $p_k$'s and $q_k$'s. This two-dimensional ASEP projects to the disordered one-dimensional ASEP studied by Evans (\cref{prop:lump}), and this allows us to give a combinatorial formula for the stationary distribution of the latter (\cref{cor:ss}).

It turns out that the two-dimensional ASEP is interesting in its own right for several reasons. 

For two special cases, (i) $p_i = q_i$ for all $i$ and (ii) $q_i = 0$ for all $i$, we find that the partition function is a symmetric polynomial in the $p_i$'s; see \cref{prop:symmetric_particles} and \cref{prop:Z_totally_asymmetric}.
We give explicit formulas for the densities (i.e. the occupation probabilities in the stationary distribution) of both $\1$'s and $\2$'s. 
We then calculate the currents for both $\1$'s and $\2$'s across a given horizontal edge. 
Since the $\2$'s move nonlocally, we consider their horizontal current between any two adjacent columns as well as their vertical current between any two adjacent rows.

We find a remarkable coincidence, that the total current of $\1$'s in the horizontal direction in the $j$'th row is identical to that of the $\2$'s in the vertical direction between the $(j-1)$'th and $j$'th row (\cref{cor:curr1} and \cref{cor:curr2-v}). 
The fact that these two are the same does not follow from the dynamics.
We dub this the \emph{Scott Russell phenomenon}, named after the Scottish engineer (John) Scott Russell, the eponym for the \emph{linkage} which translates linear motion in one direction to that in a perpendicular direction. See \cref{fig:linkage} for an illustration of the linkage.
This is a manifestly two-dimensional occurrence and the reason why it is crucial to view this as a process on the torus rather than as a multispecies one-dimensional process or a process on set partitions.

\begin{center}
\begin{figure}[!ht]
\includegraphics[width=0.7\textwidth]{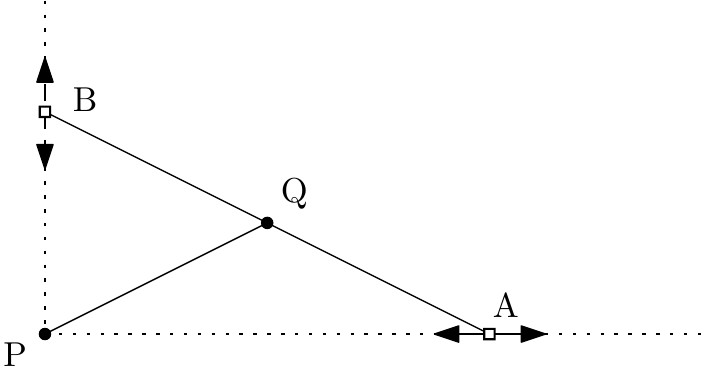}
\caption{A cartoon of the Scott Russell linkage. Here, PQ and AB are rigid bars. P is fixed and Q is a hinge. A and B are forced to move along the lines shown. When A is moved horizontally, B moves vertically at the same speed.}
\label{fig:linkage}
\end{figure}
\end{center}

Our two-dimensional process also sheds some light on the combinatorial structure underlying previous work by the first author~\cite{ayyer-2020} in which only one particle, known as a tracer, moves asymmetrically with forward and backward rates $p$ and $q$ respectively. The other particles move symmetrically with rate $1$. This also explains certain simplifications that occur in the recent work by Lobaskin and Evans~\cite{lobaskin-evans-2020} where they study a model with many totally asymmetric tracers (i.e. $q_i=0$ for all $i$).

The plan of the rest of the article is as follows. In \cref{sec:torus}, we define the two-dimensional model on the torus and explain how it can be interpreted as a one-dimensional multispecies exclusion process. We explain the projection to the inhomogeneous ASEP on the ring in \cref{sec:onedim}. We compute the stationary distribution and the partition function in \cref{sec:statdist}. 
The special case where some particles move totally asymmetrically is dealt with in \cref{sec:qi_zero}.
Finally, the densities and currents are derived and the Scott Russell phenomenon is explained in \cref{sec:denscurr}.

\section{The two-dimensional model on the torus}
\label{sec:torus}

We now define the exclusion process on a discrete $L \times n$ torus $\bZ/L\bZ\times \bZ/n\bZ$  with particles of two types and vacancies.
As mentioned earlier, we will denote first class particles by $\1$, second class particles by $\2$, and vacancies by $\0$.

\subsection{State space}
\label{sub:state_space}

\begin{defn}
\label{config-defi}
Let $\mathcal{A}_{L,n}$ consist of configurations 
$A \equiv (A_{i,j})_{\substack{1 \leq i \leq L \\ 1 \leq j \leq n}}$ where $A_{i,j}\in\{\0,\1,\2\}$ such that:
\begin{itemize}

\item Each row contains exactly one $\1$.

\item Each column contains exactly one particle (either $\1$ or $\2$).

\item The column indices of $\1$'s read from left to right form a cyclically increasing sequence, i.e. a sequence of integers for which a cyclic permutation exists transforming it to an increasing sequence.

\end{itemize}
\end{defn}

\begin{center}
\begin{figure}[!ht]
\includegraphics[width=0.9\textwidth]{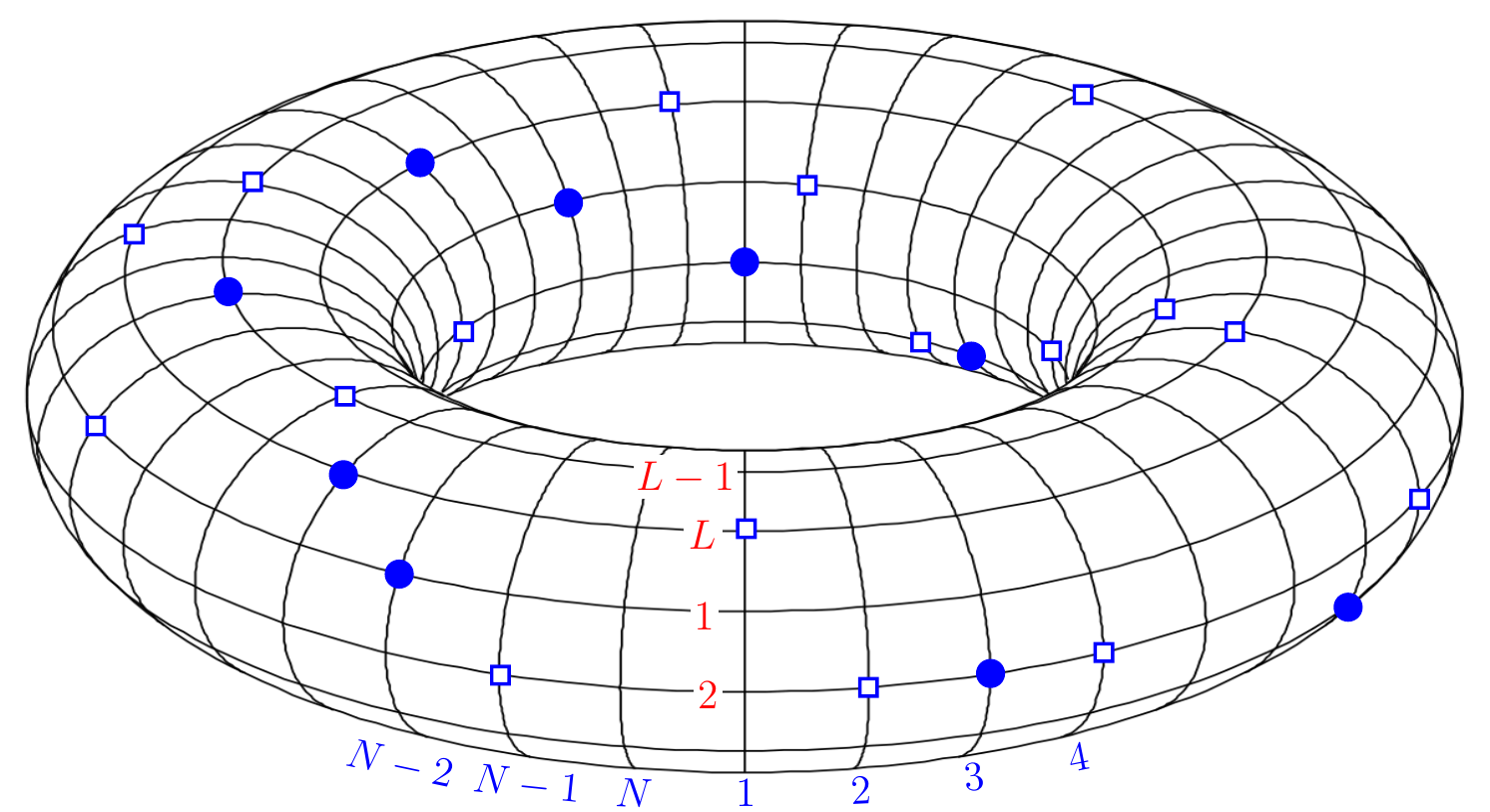}
\caption{An illustration of a configuration in $\mathcal{A}_{L,n}$.}
\label{fig:multiline_on_torus}
\end{figure}
\end{center}

Such a configuration is illustrated in Figure~\ref{fig:multiline_on_torus}; note that certain particles are hidden from view. For convenience, we will represent configurations $A \in \mathcal{A}_{L,n}$ can be written as arrays $A \equiv (A_{i,j})_{1 \leq i \leq L, 1 \leq j \leq n}$, keeping in mind that this is actually a torus so that rows and columns ``wrap around'' horizontally and vertically; see Figure~\ref{fig:small_example}, top.

It will turn out, because of the horizontal translation invariance of the dynamics, that it suffices to focus attention to configurations in $A \in \mathcal{A}_{L,n}$ such that $A_{1,1} = \1$. We will denote the set of such configurations by $\mathcal{A}'_{L,n}$.
With that normalization, note that the third condition in \cref{config-defi} becomes that the column indices of $\1$'s in $A$ form a strictly increasing sequence.
We call such configurations {\em restricted configurations}. For example, the set of such restricted configurations $\mathcal{A}'_{4,2}$ is depicted in Figure~\ref{fig:small_example}.

\begin{figure}[!ht]
\includegraphics[width=\textwidth]{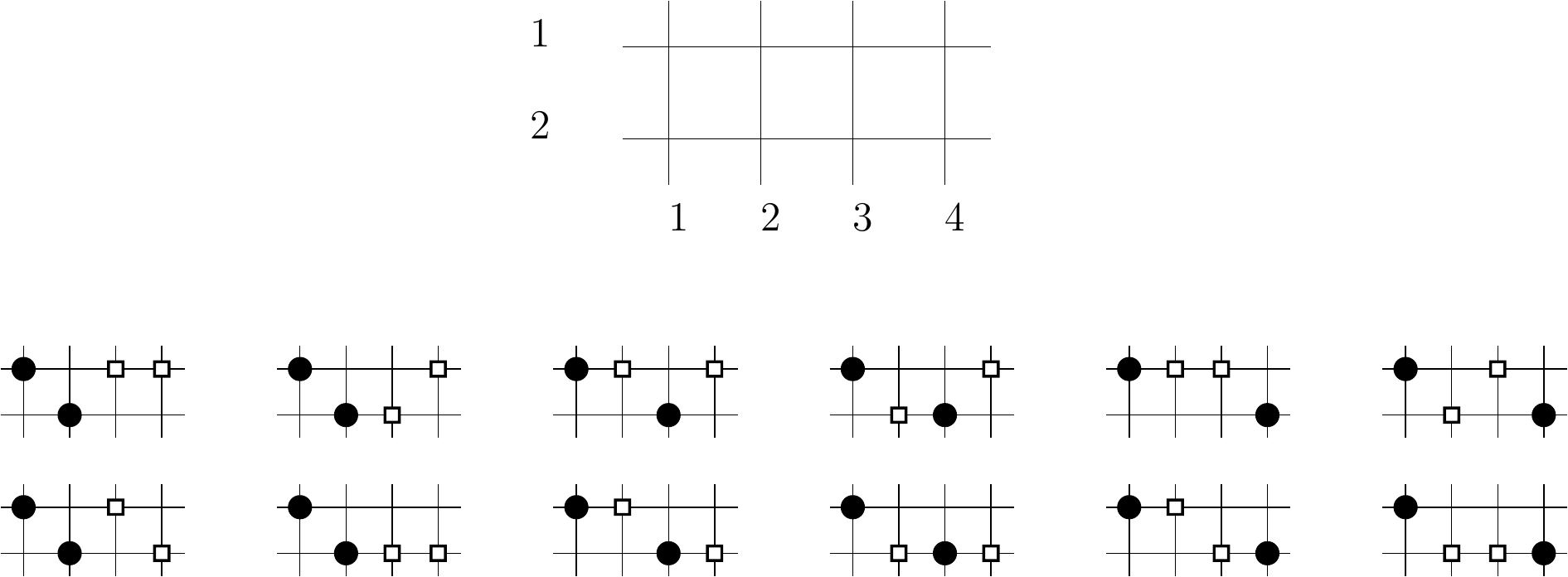}
\caption{Torus for $L=2$ and $N=4$, with all 12 of its restricted configurations.\label{fig:small_example}}
\end{figure}

\subsection{Dynamics}
\label{sub:dynamics}

The dynamics is a continuous-time Markov chain with the following transitions and rates.
Transitions are always initiated by particles of type $\1$. In fact, the vertical projection of the particles of type $\1$ follow an exclusion process on a one dimensional torus, cf. \cref{sec:onedim}.
We focus on the $\1$ in the $k$\textsuperscript{th} row so $A_{k,j}=\1$ for a unique index $j$. There are four types of transitions.

The first two are forward transitions. Now by definition $A_{k',j+1}\neq 0$ for a unique value of $k'$. For a forward transition to occur, we require $A_{k',j+1}=\2$, which we now assume. We distinguish two cases:
\begin{enumerate}
\item 
\label{it:transp1}
If $k'\neq k$, then we have the transition with rate $p_k$
\begin{center}
\includegraphics[scale=0.8]{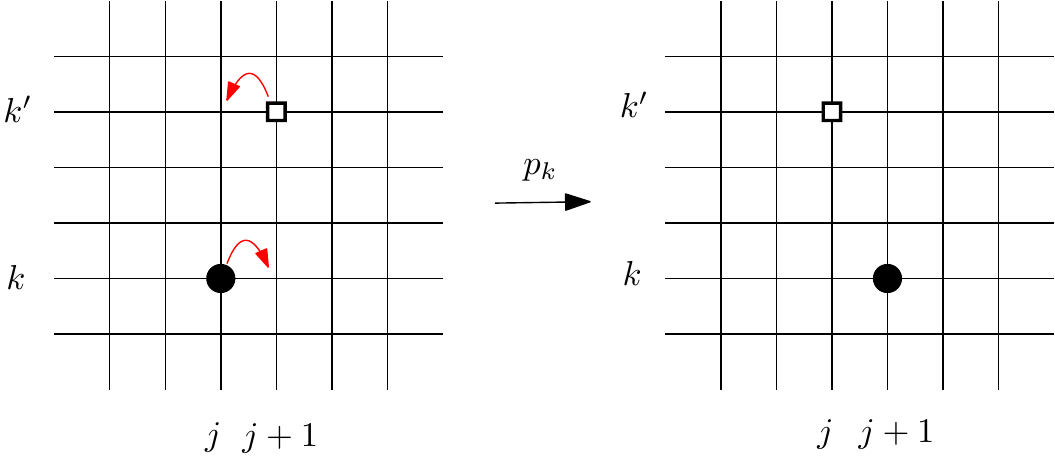}
\end{center}
That is, the new configuration $B$ satisfies $B_{k,j+1}=\1$, $B_{k',j}=\2$,  while the other columns are the same as in $A$.

\item 
\label{it:transp2}
If $k'=k$, then we have the transition with rate $p_k$
\begin{center}
\includegraphics[scale=0.8]{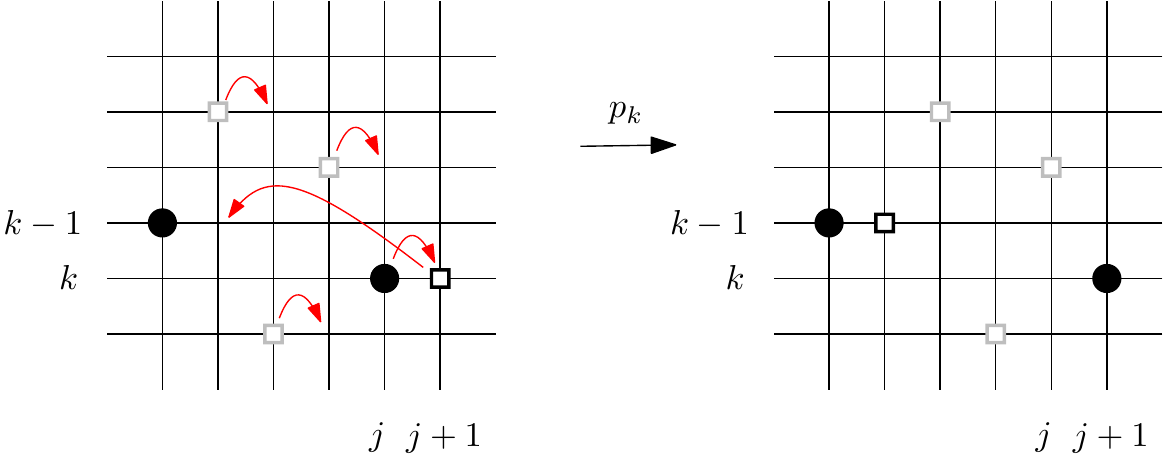}
\end{center}
That is, the new configuration $B$ satisfies $B_{i,l}=A_{i,l+1}$ for $l \in \{j'+1,\dots,j+1\}$ and any $i$, $B_{k-1,j'}=\2$, while the other columns are the same as in $A$. Here $j'$ is given by $A_{k-1,j'-1}=\1$.

We now consider backward transitions, which are defined in complete analogy, and so we illustrate them succinctly. By definition $A_{k'',j-1}\neq 0$ for a unique value of $k''$. For a backward transition to occur, we need to have $A_{k'',j-1}=\2$, which we again assume. As before, we have two cases:

\item 
\label{it:transq1}
If $k''\neq k$, then we have the transition with rate $q_k$
\begin{center}
\includegraphics[scale=0.8]{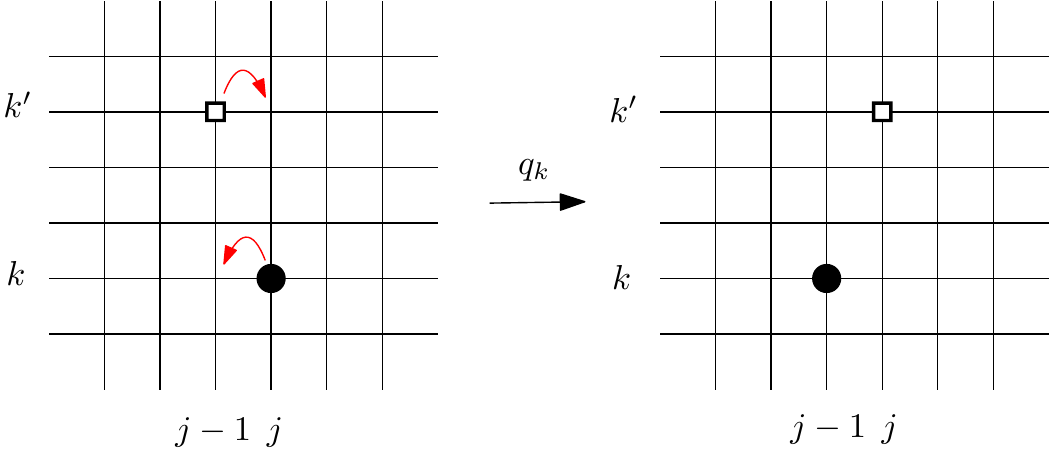}
\end{center}

\item 
\label{it:transq2}
If $k''= k$, then we have the transition with rate $q_k$
\begin{center}
\includegraphics[scale=0.8]{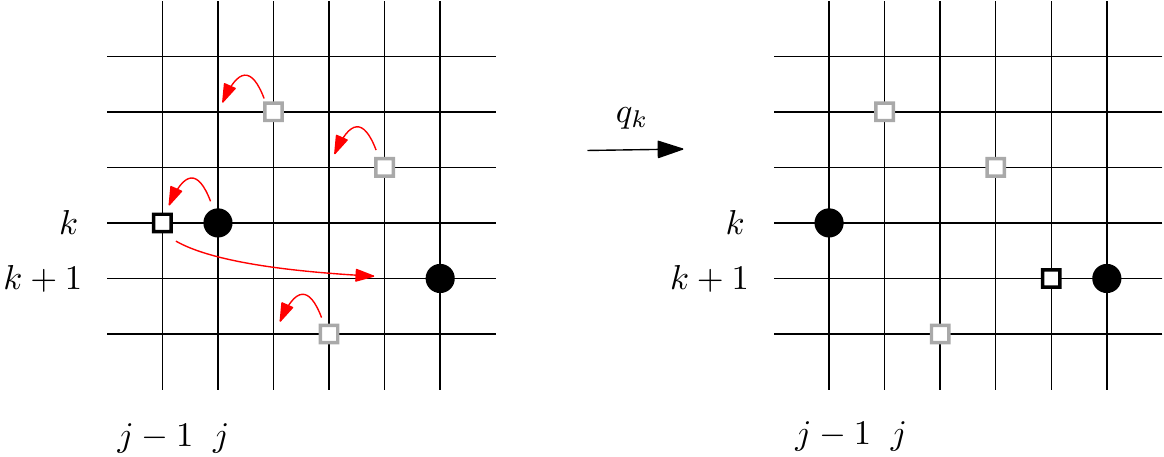}
\end{center}
\end{enumerate}

Note that \cref{it:transq1,it:transq2} are reversed versions of \cref{it:transp1,it:transp2} respectively. 
However, even when $p_k = q_k$ for all $k$, the dynamics is not reversible since transitions of types \cref{it:transp2} and \cref{it:transq2} are not inverses of each other.

From the general theory of Markov processes~\cite{norris-1998}, a continuous-time Markov chain is completely determined by its \emph{(column-stochastic) generator}. Recall that a generator is a matrix indexed by the configuration space whose $(i,j)$'th entry is equal to the transition rate from state $i$ to state $j$ if $i \neq j$ and whose diagonal entries are chosen such that column sums are zero.
The stationary probabilities are then given by the entries of the right null-eigenvector of the generator.

We make a few remarks about the symmetries of this dynamics:

\begin{rem}
\label{rem:dynamics}
\begin{enumerate}
\item The dynamics is invariant with respect to horizontal translation. Indeed, the value of $j$ does not modify the dynamics.
\item  Vertical translation modifies the rates by shifting $p_k\mapsto p_{k+1}$ and $q_k\mapsto q_{k+1}$.
\item  Forward and backward rules are directly related as follows: one can go from the first ones to the second ones, and vice versa, by simultaneous reflections along both coordinate axes (or equivalently, a rotation by $\pi$) together with an exchange of $p_k$ with $q_k$ for all $k$.
\end{enumerate}
\end{rem}

\subsection{Reformulation 1: Colored one-dimensional exclusion process.}
\label{sub:colored}

We consider now a second $(2n)$-species exclusion process (without vacancies) on the one-dimensional ring $\bZ/L\bZ$, with $L\geq n$. As will be quite evident from its definition, it is simply a more compact encoding of the previous process.

The $2n$ particles are labelled $\1_1,\ldots,\1_n$, ${\2}_1,\dots,{\2}_n$, and their indices will always be considered modulo $n$. The configurations can be naturally considered as words $w_1\ldots w_L$ where each $w_i$ is one of the $2n$ particles.

\begin{defn}[$\Omega_{L,n}$] 
The state space $\Omega_{L,n}$ consists of configurations with exactly one particle of each type $\1_1,\ldots,\1_n$ occurring cyclically in that order. The remaining $L-n$ positions are occupied by the particles ${\2}_1,\dots,{\2}_n$ and each of these can occur arbitrarily many times.
\end{defn}

Let $\Omega'_{L,n}\subset \Omega_{L,n}$ be the subset of restricted configurations $w$ defined by $w_1 = \1_1$. For example, the restricted configurations in $\Omega'_{4,2}$ are
\begin{multline}
\label{configs-42'}
\{\1_1 \1_2 {\2}_1 {\2}_1, \1_1 \1_2 {\2}_1 {\2}_2, \1_1 \1_2 {\2}_2 {\2}_1, \1_1 \1_2 {\2}_2 {\2}_2, 
\1_1 {\2}_1 \1_2 {\2}_1, \1_1 {\2}_1 \1_2 {\2}_2, \\
\1_1 {\2}_2 \1_2 {\2}_1, \1_1 {\2}_2 \1_2 {\2}_2, 
\1_1 {\2}_1 {\2}_1 \1_2, \1_1 {\2}_1 {\2}_2 \1_2, \1_1 {\2}_2 {\2}_1 \1_2, \1_1 {\2}_2 {\2}_2 \1_2 \}.
\end{multline}

There are clearly $\binom{L-1}{n-1}$ to choose the locations of the particles $\1_2,\1_3, \allowbreak \ldots,\1_n$. This leaves the remaining $L-n$ positions for the particles of the form $\2_i$, where $i$ can be chosen arbitrarily. It follows
\begin{align}
|\Omega'_{L,n}| =\frac{1}{L}|\Omega_{L,n}|=\binom{L-1}{n-1} n^{L-n}.
\end{align}

The transitions in $\Omega_{L,n}$ are given by the following rules for $1 \leq k \leq n$:
\begin{align}
\label{transp1}
\cdots \1_k| {\2}_i \cdots {\overset{p_k}{\longrightarrow}}&\quad \cdots {\2}_i | \1_k\cdots & \hspace*{-1.5cm} \text{ if }i\neq k,\\
\label{transp2}
 \cdots \1_{k-1} C \1_k|{\2}_k \cdots {\overset{p_k}{\longrightarrow}}&\quad \cdots \1_{k-1}{\2}_{k-1} C |\1_k\cdots,\\
\label{transq1}
\cdots  {\2}_i|\1_k\cdots {\overset{q_k}{\longrightarrow}}&\quad \cdots \1_k | {\2}_i\cdots & \hspace*{-1.5cm} \text{ if }i\neq k,\\
\label{transq2}
 \cdots  {\2}_k |\1_k C \1_{k+1}\cdots {\overset{q_k}{\longrightarrow}}&\quad \cdots \1_k | C {\2}_{k+1}\1_{k+1}\cdots,
\end{align}  
where we have placed a vertical divider to mark the location of the transition and $C$ is a (possibly empty) block which contains particles in the set $\{{\2}_1,\dots,{\2}_n\}$ between successive $\1$'s. Note that the integers $k+1$ and $k-1$ have to be interpreted modulo $n$ as was mentioned above.

\begin{eg}
\label{eg:trans}
Consider the configuration \[\tau = \1_1 {\2}_3{\2}_3{\2}_4\1_2 {\2}_2\1_3 {\2}_3\1_4 {\2}_1 \in \Omega'_{10,4}.\] The outgoing transitions from $\tau$ are 
\begin{align*}
\tcr{{\2}_3 \1_1} {\2}_3{\2}_4\1_2 {\2}_2\1_3 {\2}_3\1_4 {\2}_1 &\quad \text{with rate} \quad p_1, \\
\1_1 \tcr{{\2}_1} {\2}_3{\2}_3{\2}_4\tcr{\1_2} \1_3 {\2}_3\1_4 {\2}_1 &\quad \text{with rate} \quad p_2, \\
\1_1 {\2}_3{\2}_3{\2}_4\1_2 \tcr{{\2}_2} {\2}_2\tcr{\1_3} \1_4 {\2}_1 &\quad \text{with rate} \quad p_3, \\
\1_1 {\2}_3{\2}_3{\2}_4\1_2 {\2}_2\1_3 {\2}_3 \tcr{{\2}_1} \tcr{\1_4} & \quad \text{with rate} \quad p_4, \\
{\2}_3{\2}_3{\2}_4 \tcr{{\2}_2} \1_2 {\2}_2\1_3 {\2}_3\1_4 \tcr{\1_1} &\quad \text{with rate} \quad q_1, \\
\1_1 {\2}_3{\2}_3 \tcr{\1_2 {\2}_4} {\2}_2 \1_3 {\2}_3\1_4 {\2}_1 &\quad \text{with rate} \quad q_2, \\
\1_1 {\2}_3{\2}_3{\2}_4\1_2 \tcr{\1_3 {\2}_2} {\2}_3\1_4 {\2}_1 &\quad \text{with rate} \quad q_3, \\
\1_1 {\2}_3{\2}_3{\2}_4\1_2 {\2}_2\1_3 \tcr{\1_4 {\2}_3} {\2}_1 &\quad \text{with rate} \quad q_4.
\end{align*}
\end{eg}

{\bf Isomorphism between $\mathcal{A}_{L,n}$ and $\Omega_{L,n}$:}  
There is a simple  bijection between the two sets $\mathcal{A}_{L,n}$ and $\Omega_{L,n}$: If $A\in \mathcal{A}_{L,n}$, then for any $i=1,\ldots,L$, there is a unique $k_i$ such that $A_{k_i,i}\neq 0$, since each column contains exactly one particle. If  $A_{k_i,i}=\1$, define $w_i=\1_{k_i}$, and if $A_{k_i,i}=\2$ then define $w_i=\2_{k_i}$. The resulting word $w_1w_2\cdots w_L$ is the desired configuration in $\Omega_{L,n}$.

This can be simply understood as projecting while recording the labels of the rows as indices. See \cref{fig:multiline} for an illustration. Also, note that configurations in \cref{fig:small_example} project to those in \eqref{configs-42'}, in their given oredering.

\begin{center}
\begin{figure}[!ht]
\includegraphics[width=0.7\textwidth]{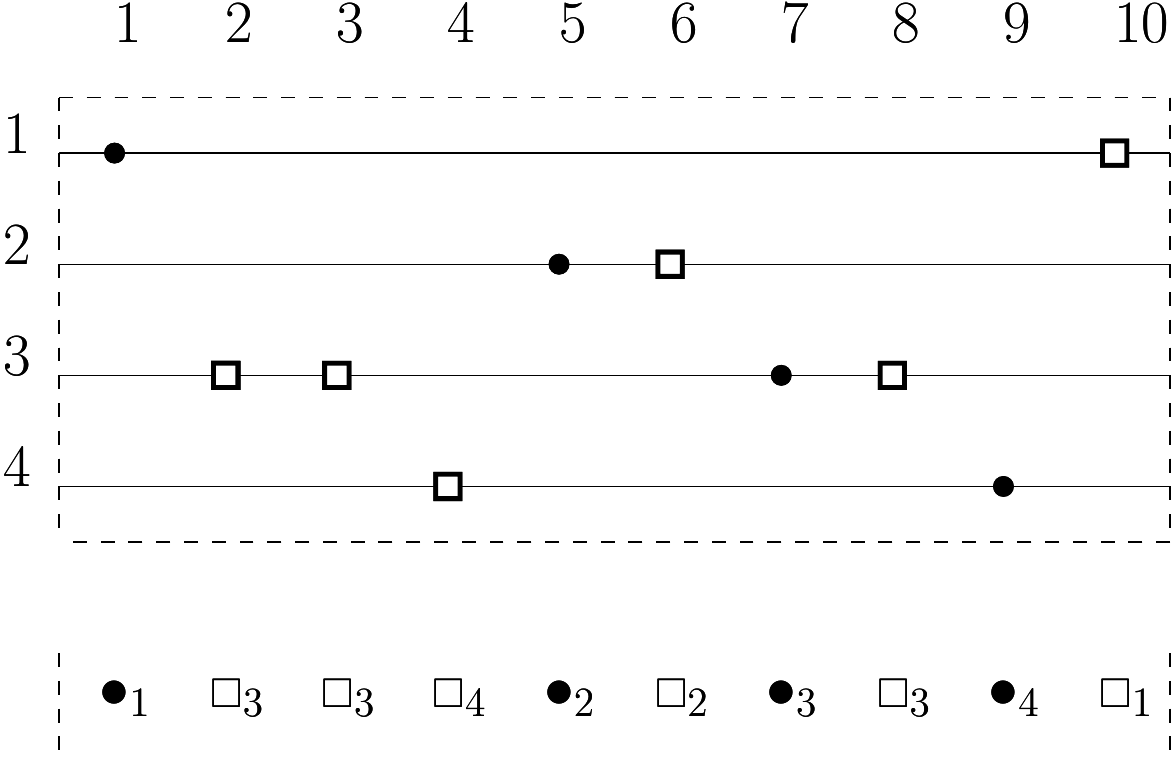}
\caption{
The configuration in $\mathcal{A}'_{10,4}$  (top) corresponding to the configuration $\tau \in \Omega'_{10,4}$ in \cref{eg:trans} (bottom), according to \cref{prop:isom}.
\label{fig:multiline}
}
\end{figure}
\end{center}

\begin{prop}
\label{prop:isom}
For arbitrary rates $p_k$ and $q_k$, the correspondence above is an isomorphism between the exclusion processes on $\mathcal{A}_{L,n}$ and $\Omega_{L,n}$.
\end{prop}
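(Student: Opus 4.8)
The plan is to verify that the explicit bijection $A \mapsto w$ described just above the statement intertwines the two generators, i.e. that it maps transitions of $\mathcal{A}_{L,n}$ to transitions of $\Omega_{L,n}$ preserving rates, and conversely. Since a continuous-time Markov chain is determined by its generator (as recalled in the excerpt), and a bijection of state spaces that preserves all off-diagonal rates automatically preserves the diagonal entries (column sums zero), it suffices to check the correspondence of transitions one type at a time.

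\begin{proof}[Proof sketch]
First I would record the inverse of the bijection: given $w = w_1\cdots w_L \in \Omega_{L,n}$, place a $\1$ (resp.\ $\2$) in row $k$, column $i$ whenever $w_i = \1_k$ (resp.\ $w_i = \2_k$), and fill the rest of column $i$ with $\0$'s. One checks immediately that the three defining conditions of \cref{config-defi} correspond exactly to the defining conditions of $\Omega_{L,n}$: ``one $\1$ per row'' $\leftrightarrow$ ``one particle $\1_k$ for each $k$'', ``one particle per column'' $\leftrightarrow$ ``$w$ is a word'', and the cyclically increasing condition on column indices of $\1$'s $\leftrightarrow$ the $\1_k$'s occur in cyclic order $\1_1,\ldots,\1_n$. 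So the map is a genuine bijection $\mathcal{A}_{L,n}\to\Omega_{L,n}$.

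Next I would go through the four transition types. Fix the $\1$ in row $k$, sitting in column $j$, so $w_j = \1_k$. In the torus picture, the column $j+1$ contains a unique particle, in some row $k'$; under the bijection this is exactly the letter $w_{j+1}$, which is $\1_{k'}$ or $\2_{k'}$. A forward transition requires $A_{k',j+1} = \2$, i.e.\ $w_{j+1} = \2_{k'}$, matching the hypothesis ``$\1_k \mid \2_i$'' in \eqref{transp1}--\eqref{transp2}.

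\begin{itemize}
\item Case $k'\neq k$ (rule \ref{it:transp1}): the $\1$ and the $\2$ in columns $j,j+1$ swap their columns, all other columns untouched. On words this reads $\cdots \1_k \mid \2_{k'}\cdots \to \cdots \2_{k'}\mid \1_k\cdots$, which is exactly \eqref{transp1} with $i = k'\neq k$, rate $p_k$.
\item Case $k'= k$ (rule \ref{it:transp2}): here the $\2$ hit is in the same row as the moving $\1$. Let $j'$ be defined by $A_{k-1,j'-1}=\1$; then columns $j'+1,\ldots,j+1$ are cyclically shifted left by one (within their rows) and a $\2$ is created at $(k-1,j')$. Reading off the word: the block between $\1_{k-1}$ and $\1_k$ is the $C$ of \eqref{transp2}, and the effect is $\cdots \1_{k-1} C \,\1_k \mid \2_k\cdots \to \cdots \1_{k-1}\,\2_{k-1}\,C\mid \1_k\cdots$, i.e.\ \eqref{transp2}, rate $p_k$. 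I would draw this carefully from the figures \texttt{Transitionf2.pdf} to confirm the index bookkeeping ($j'$ corresponds to the position just after $\1_{k-1}$, and the $\2$ created in row $k-1$ becomes $\2_{k-1}$).
\item The two backward cases \ref{it:transq1}, \ref{it:transq2} are handled identically using \cref{rem:dynamics}(3): they are the forward cases reflected through a rotation by $\pi$ with $p_k\leftrightarrow q_k$, and the bijection manifestly commutes with this rotation, so they map to \eqref{transq1}, \eqref{transq2} with rate $q_k$.
\end{itemize}

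Since the bijection takes each transition of $\mathcal{A}_{L,n}$ with a given rate to a transition of $\Omega_{L,n}$ with the same rate, and since it is a bijection on states (so the correspondence of outgoing transitions is a bijection for each state, forcing diagonal generator entries to match as well), the two generators are conjugate by the corresponding permutation matrix. Hence the processes are isomorphic. The main obstacle is purely bookkeeping: in case \ref{it:transp2}/\ref{it:transq2} one must check that the cyclic left-shift of a range of columns on the torus translates exactly into moving a single particle past the block $C$ in the word, and that the newly created $\2$ picks up the correct row index $k-1$ (resp.\ $k+1$); comparing directly with \cref{eg:trans} on $\Omega'_{10,4}$ and its picture in \cref{fig:multiline} provides a complete consistency check.
\end{proof}
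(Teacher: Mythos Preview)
Your proof is correct and follows essentially the same approach as the paper: both verify that the bijection $A\mapsto w$ carries each of the four transition types \cref{it:transp1}--\cref{it:transq2} in $\mathcal{A}_{L,n}$ to the corresponding rules \eqref{transp1}--\eqref{transq2} in $\Omega_{L,n}$ with matching rates. The paper's version is simply terser, treating only one case explicitly and declaring the rest a ``simple inspection'', whereas you spell out the bookkeeping (especially for rule~\ref{it:transp2}) and invoke the $\pi$-rotation symmetry of \cref{rem:dynamics}(3) for the backward cases.
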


\begin{proof}
The correspondence is clearly bijective. The isomorphism between the two exclusion processes is established by verifying that the transitions \cref{it:transp1,it:transp2,it:transq1,it:transq2}
in $\mathcal{A}_{L,n}$
match exactly with those in 
\cref{transp1,transp2,transq1,transq2} 
in $\Omega'_{L,n}$. This is a simple inspection:
for example, \cref{it:transp1} corresponds to $\1_k \2_{k'} \to \2_{k'} \1 k$ where $k' \neq k$. The others follow similarly.
\end{proof}

\subsection{Reformulation 2: Marked set partitions}
\label{sub:partitions}

While the bijection in the last section is essentially straightforward, we now present another one which reveals some of the combinatorics of the model. Indeed
the configurations in $\mathcal{A}_{L,n}$ can be represented as certain (marked) set partitions, as follows:

 Let $A\in\mathcal{A}_{L,n}$. For $i\in\{1,\ldots,L\}$, let $B_i$ be the set of $j\in\{1,\ldots,n\}$ such that $A_{i,j}\neq 0$. That is, $B_i$ is the set of column indices of particles on row $i$ in $A$. We let $b_i\in B_i$ be the unique position such that $A_{i,b_i}=1$. Equivalently, in the model $\Omega_{L,n}$, $B_i$ is the set of positions of all particles $\2_i$ together with the position $b_i$ of the particle $\1_i$.

For example, the configuration in Figure~\ref{fig:multiline} corresponds to the following subsets with the elements $b_i$  underlined:
\[
(B_1,\dots,B_4) = (\{\underline{1},10\}, \{\underline{5},6\}, \{2,3,\underline{7},8\}, \{4,\underline{9}\}).
\]

  By the first two conditions in Definition~\ref{config-defi}, $B_i$, $i=1,\ldots,L$, form an ordered set partition of $\{1,\ldots,n\}$: that is that the $B_i$ are nonempty, disjoint, and their union is $\{1,\ldots,n\}$. Moreover, the third condition says that the $b_i$ form a cyclically increasing sequence. 
 
\begin{defn} 
Given $L,n$, define $\mathcal{P}_{L,n}$ as the set of ordered set partitions with single marked elements in each block that form a cyclically increasing sequence.
\end{defn}

We have illustrated the following:

\begin{prop}
\label{prop:bij_marked_partitions}
The map associating to a configuration $A$ the data $\left((B_i,b_i)\right)_{i=1,\ldots,L}$ is a bijection from $\mathcal{A}_{L,n}$ to $\mathcal{P}_{L,n}$.
\end{prop}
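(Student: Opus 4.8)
The plan is to write down the inverse map explicitly and observe that both composites are the identity; since the discussion preceding the statement already exhibits the correspondence on an example and traces through the conditions, the remaining work is a direct matching of the three clauses of \cref{config-defi} against the definition of $\mathcal{P}_{L,n}$.

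First I would check well-definedness: for $A\in\mathcal{A}_{L,n}$ the sequence of pairs $((B_i,b_i))_{i=1,\dots,L}$ lies in $\mathcal{P}_{L,n}$. The ``one $\1$ per row'' clause makes each $b_i$ well-defined and unique, so each block has exactly one marked element and is nonempty; the ``one particle per column'' clause says each $j\in\{1,\dots,n\}$ lies in exactly one $B_i$, so $(B_1,\dots,B_L)$ is an ordered set partition of $\{1,\dots,n\}$; and since $b_i$ is by construction the column index of the $\1$ on row $i$, the third clause is verbatim the condition that $(b_1,\dots,b_L)$ be cyclically increasing. Conversely, given $((B_i,b_i))_{i=1,\dots,L}\in\mathcal{P}_{L,n}$ I would reconstruct $A$ by setting $A_{i,j}=\1$ when $j=b_i$, $A_{i,j}=\2$ when $j\in B_i\setminus\{b_i\}$, and $A_{i,j}=\0$ otherwise; the partition property guarantees exactly one particle per column, uniqueness of $b_i$ gives exactly one $\1$ per row, and cyclic monotonicity of the $b_i$ is again the third clause, so $A\in\mathcal{A}_{L,n}$. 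The two constructions are manifestly mutually inverse, which gives the bijection.

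I do not expect a genuine obstacle. The only point deserving a line of care is the observation that the marked elements $b_i$ and the $\1$-positions in $A$ are literally the same sequence, so that the ``cyclically increasing'' conditions on the two sides coincide with no reindexing, and that ``exactly one particle per column'' is precisely the assertion that the $B_i$ cover $\{1,\dots,n\}$ disjointly.
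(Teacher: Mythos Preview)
Your proposal is correct and follows exactly the paper's approach: the paper's entire proof is the single sentence describing the inverse map (place particles in row $i$ at positions $B_i$, with the $\1$ at $b_i$), and you do the same while additionally spelling out the well-definedness checks in both directions. There is nothing to add; your version is simply a more explicit rendering of the paper's one-line argument.
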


Indeed, the inverse map consists in letting the particles in row $i$ to occur at positions $B_i$, the particle $\1$ occurring at position $b_i$.
If we define $\mathcal{P}'_{L,n}$ to be the subset of $\mathcal{P}_{L,n}$ where the first marked element is $1$, it is naturally in bijection with $\mathcal{A}'_{L,n}$ by restriction of the above correspondence.

\begin{rem}
The dynamics can be of course transported to this model via this bijection, but the result does not look particularly pleasant or illuminating. Since we do not use it in the following, we leave the explicit description to the interested reader.
\end{rem}

\section{The one-dimensional ASEP}
\label{sec:onedim}

We now recall the one-dimensional asymmetric simple exclusion process (ASEP) on a ring where each particle has different forward and backward jumping rates studied first by Evans~\cite{evans-1996}.

Consider $n$ particles $\1_1,\1_2,\ldots,\1_n$ from left to right (cyclically) on a ring of size $L$ with vacancies denoted by $\2$. Let $\Psi_{L,n}$ be the set of all such configurations. The dynamics
is as follows. Particle $\1_k$ has forward and backward jumping rates $p_k,q_k$ respectively, i.e.
\[
\1_k \, \2 \underset{q_k}{\overset{p_k}{\rightleftharpoons}}
 \2 \1_k,
\] 
and no other transitions.
We note that it is not necessary for the particles $\1$ to be distinguishable. However, it will be convenient from our point of view to suppose that they are.
See \cref{fig:eg-conf} for a configuration and its allowed transitions.
 
\begin{center}
\begin{figure}[!ht]
\includegraphics{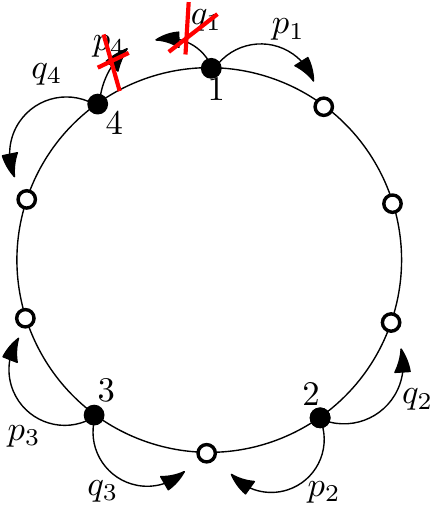}
\caption{The configuration $\tau = \1_1{\2}{\2}{\2}\1_2{\2}\1_3{\2}{\2}\1_4$ for the system with $L=10$ and $n=4$, where the labelling starts from the top and proceeds clockwise.}
\label{fig:eg-conf}
\end{figure}
\end{center}

This ASEP is clearly irreducible if and only if either $\prod_k p_k\neq 0$ or $\prod_kq_k\neq 0$. It has thus a unique stationary distribution, which is explicitly described by Evans~\cite{evans-1996}. 
We will now see how to recover this stationary distribution via ${\Omega}_{L,n}$.

Recall that a {\em projection} or {\em lumping} of a Markov chain is a projection such that the resulting stochastic process is also a Markov chain~\cite[Section 2.3.1]{MarkovMixing}.

\begin{defn}
We define the map $\Pi: \Omega_{L,n} \to \Psi_{L,n}$ as follows.
Given $\omega \in {\Omega}_{L,n}$, replace all ${\2}_i$'s in $\omega$ by $\2$ to obtain $\Pi(\omega) \in \Psi_{L,n}$.
\end{defn}

It is easily checked that the transitions \cref{it:transp1,it:transp2} (resp. \cref{it:transq1,it:transq2}) 
in $\mathcal{A}_{L,n}$ correspond to forward (resp. backward) transitions in $\Omega_{L,n}$.
We thus obtain the following result.

\begin{prop}
\label{prop:lump}
The exclusion process on ${\Omega}_{L,n}$ lumps to the ASEP on $\Psi_{L,n}$ via the map $\Pi$.
\end{prop}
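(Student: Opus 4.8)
The plan is to verify the defining property of a lumping (Markov projection) directly from the transition rules, by showing that the total rate out of a configuration $\omega \in \Omega_{L,n}$ into the preimage $\Pi^{-1}(\psi)$ of any target $\psi \in \Psi_{L,n}$ depends only on $\Pi(\omega)$, not on $\omega$ itself. Concretely, I would fix $\omega$, set $\psi = \Pi(\omega)$, and examine each of the four transition types \cref{transp1,transp2,transq1,transq2} in turn, recording which $\1_k$ triggers it and what the image of the resulting configuration is under $\Pi$.

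First I would observe that in every transition the set of positions occupied by first-class particles $\1_1,\dots,\1_n$ changes in a way that does not depend on the $\2$-indices: in \cref{transp1} the particle $\1_k$ simply swaps right with an adjacent vacancy (after erasing indices, $\1_k \2 \to \2 \1_k$); in \cref{transq1} it swaps left ($\2 \1_k \to \1_k \2$); and crucially, in \cref{transp2} and \cref{transq2} the net effect on the $\1$'s is again exactly that $\1_k$ moves one step forward, respectively backward, over a single vacancy — the block $C$ consists entirely of $\2$-particles, so under $\Pi$ the configuration $\cdots \1_{k-1} C \1_k \2_k \cdots$ maps to $\cdots \1_{k-1} (C\text{ erased}) \1_k \2 \cdots$ and the target $\cdots \1_{k-1} \2 (C\text{ erased}) \1_k \cdots$ has the same underlying $\1$-positions, namely $\1_k$ has hopped one site left-to-right past a vacancy relative to the $\1_{k-1}\cdots\1_k$ arrangement. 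Wait — I must be careful about orientation, but in any case the point is the same: applying $\Pi$ to both sides of each of the four rules yields exactly the ASEP move $\1_k \2 \to \2 \1_k$ (rate $p_k$) or $\2 \1_k \to \1_k \2$ (rate $q_k$) on the ring $\Psi_{L,n}$, and the location of that move is determined by $\psi = \Pi(\omega)$ alone.

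Second, I would check the two remaining bookkeeping points needed for a genuine lumping. One: whether a given $\1_k$ in $\omega$ can move forward at all is governed by whether the site immediately to its right (in the $\2$-indexed model) is a $\2_i$; after applying $\Pi$ this is precisely the condition that the site to the right of $\1_k$ in $\psi$ is a vacancy $\2$. The subtlety is transition \cref{transp2}: there the site to the right of $\1_k$ must be specifically $\2_k$, not just any $\2_i$ — but the rules \cref{transp1} and \cref{transp2} are set up so that exactly one of them applies whenever that site is any $\2_i$ whatsoever, so the \emph{combined} rate for $\1_k$ to make a forward move is $p_k$ precisely when $\1_k$ is immediately followed by a vacancy in $\psi$, and $0$ otherwise. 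The same dichotomy handles \cref{transq1} versus \cref{transq2} on the backward side. Two: I should note that distinct transitions out of $\omega$ land in distinct configurations of $\Omega_{L,n}$ with distinct $\Pi$-images (each is labelled by the unique $k$ that moved and the direction), so no collapsing of rates occurs and the lumped rate from $\psi$ to a neighbour $\psi'$ is exactly $p_k$ or $q_k$.

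The main obstacle — really the only thing requiring genuine attention rather than inspection — is transition \cref{transp2} (and its mirror \cref{transq2}): one must be convinced that although this move is nonlocal in $\Omega_{L,n}$ (it cyclically shifts the whole block $C$), its projection under $\Pi$ is the purely local ASEP move $\1_k \2 \to \2 \1_k$, and that the trigger condition dovetails with \cref{transp1} to give total forward rate $p_k$. Once that is nailed down, I would conclude by invoking the standard criterion for lumpability \cite[Section 2.3.1]{MarkovMixing}: since for every $\omega$ and every $\psi'$ the quantity $\sum_{\omega' \in \Pi^{-1}(\psi')} (\text{rate } \omega \to \omega')$ depends only on $\Pi(\omega)$, the image process is Markov, and the computation above identifies it with the ASEP on $\Psi_{L,n}$ with rates $p_k, q_k$. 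This completes the proof.
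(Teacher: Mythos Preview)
Your proposal is correct and follows essentially the same approach as the paper, which dispatches the result in one sentence (``it is easily checked that the transitions \cref{it:transp1,it:transp2} (resp.\ \cref{it:transq1,it:transq2}) \ldots\ correspond to forward (resp.\ backward) transitions'') without spelling out the verification. Your write-up supplies exactly the details the paper omits: that under $\Pi$ the nonlocal moves \cref{transp2} and \cref{transq2} still project to the local ASEP hop of $\1_k$, and that the dichotomy between \cref{transp1}/\cref{transp2} (and \cref{transq1}/\cref{transq2}) guarantees the combined forward (resp.\ backward) rate out of any $\omega$ is $p_k$ (resp.\ $q_k$) whenever $\1_k$ borders a vacancy in $\Pi(\omega)$ --- which is precisely the lumping criterion.
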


Let $\pi$ denote the stationary distribution of the ASEP on $\Psi_{L,n}$ and $\hat{\pi}$, that for the exclusion process on ${\Omega}_{L,n}$: we will show in 
\cref{prop:irred}
that $\hat{\pi}$ is uniquely defined. It follows then from \cref{prop:lump} that we have the following. 

\begin{cor}
\label{cor:lump}
For every $\psi \in \Psi_{L,n}$,
\[
\pi(\psi) = \sum_{\omega \in \Pi^{-1}(w)} \hat{\pi}(\omega).
\]
\end{cor}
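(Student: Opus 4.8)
The plan is to use the standard fact that a stationary distribution pushes forward under a lumping, and then to invoke uniqueness. Set $\nu := \Pi_*\hat\pi$, that is, $\nu(\psi) := \sum_{\omega\in\Pi^{-1}(\psi)}\hat\pi(\omega)$ for $\psi\in\Psi_{L,n}$; this is a probability distribution on $\Psi_{L,n}$ because the fibres of $\Pi$ partition $\Omega_{L,n}$. The statement follows once we show that $\nu$ is stationary for the ASEP on $\Psi_{L,n}$: that ASEP is irreducible under the running hypothesis $\prod_k p_k\neq 0$ or $\prod_k q_k\neq 0$ (as recalled just before \cref{prop:lump}), hence has a unique stationary distribution, namely $\pi$, so $\nu = \pi$, which is exactly the claimed identity. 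Note that only the existence of $\hat\pi$ is needed here, which is automatic on the finite state space $\Omega_{L,n}$; its uniqueness is \cref{prop:irred}.

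To see that $\nu$ is stationary, write $q(\omega,\omega')$ for the rates on $\Omega_{L,n}$ and $\bar q(\psi,\psi')$ for those on $\Psi_{L,n}$. By \cref{prop:lump} the process on $\Omega_{L,n}$ is (strongly) lumpable along $\Pi$, which is to say that for every target $\psi'$ the quantity $\sum_{\omega'\in\Pi^{-1}(\psi')}q(\omega,\omega')$ depends only on $\Pi(\omega)$ and equals $\bar q(\Pi(\omega),\psi')$ when $\Pi(\omega)\neq\psi'$; moreover, inspecting \cref{transp1,transp2,transq1,transq2} one sees that $\Pi$ sends every elementary transition of $\Omega_{L,n}$ to a genuine transition of $\Psi_{L,n}$, so the fibres of $\Pi$ contain no internal transitions. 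Then, for fixed $\psi'$,
\begin{align*}
\sum_{\psi\neq\psi'}\nu(\psi)\,\bar q(\psi,\psi')
&=\sum_{\psi\neq\psi'}\ \sum_{\omega\in\Pi^{-1}(\psi)}\hat\pi(\omega)\sum_{\omega'\in\Pi^{-1}(\psi')}q(\omega,\omega')\\
&=\sum_{\omega'\in\Pi^{-1}(\psi')}\ \sum_{\omega\in\Omega_{L,n}}\hat\pi(\omega)\,q(\omega,\omega'),
\end{align*}
where the fibre $\Pi^{-1}(\psi')$ has been added back to the inner sum at no cost. Applying the balance equation for $\hat\pi$ to each $\omega'$, and using lumpability once more to identify the total exit rate $\sum_{\omega''\neq\omega'}q(\omega',\omega'')$ out of $\omega'$ with the total exit rate $R(\psi') := \sum_{\psi''\neq\psi'}\bar q(\psi',\psi'')$ out of $\psi'$, the right-hand side equals $R(\psi')\sum_{\omega'\in\Pi^{-1}(\psi')}\hat\pi(\omega') = R(\psi')\,\nu(\psi')$, which is precisely the stationarity equation for $\nu$.

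Combining this with uniqueness of the stationary distribution of the irreducible ASEP on $\Psi_{L,n}$ gives $\nu = \pi$, proving \cref{cor:lump}. There is no serious obstacle here: the only point requiring care is that \cref{prop:lump} must supply \emph{strong} lumpability (the rate of a lumped transition being the fibrewise sum of rates, independently of the chosen representative), which is exactly what is asserted and verified there by inspection of the transition rules. If one wishes to avoid the forward reference to \cref{prop:irred}, the computation above in fact shows that $\Pi_*$ carries \emph{every} stationary distribution of $\Omega_{L,n}$ to the unique stationary distribution of $\Psi_{L,n}$.
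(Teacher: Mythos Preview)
Your proof is correct and follows the same approach the paper implicitly relies on: the paper states the corollary as an immediate consequence of \cref{prop:lump} together with the (forward-referenced) uniqueness of $\hat\pi$, without spelling out the pushforward-of-stationary-under-lumping computation, which is exactly what you have written out in detail. One small notational point: when you extend the inner sum to all of $\Omega_{L,n}$ ``at no cost,'' you should make clear that you are summing over $\omega\neq\omega'$ (or that you take $q(\omega,\omega)=0$ in that sum), since otherwise the diagonal term of the generator would enter; with that understood, the subsequent use of the balance equation and the identification of exit rates is exactly right.
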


\section{Stationary distribution}
\label{sec:statdist}

We now describe the stationary distribution of the two-dimensional exclusion process on $\mathcal{A}_{L,n}$. 
It will be convenient for some of the proofs to work with the isomorphic multispecies exclusion process on $\Omega_{L,n}$; see \cref{prop:isom}. Let us recall the dynamics of the latter for easy reference:
\begin{align}
\label{transp1bis}
\cdots \1_k| {\2}_i \cdots {\overset{p_k}{\longrightarrow}}&\quad \cdots {\2}_i | \1_k\cdots & \hspace*{-1.5cm} \text{ if }i\neq k,\\
\label{transp2bis}
 \cdots \1_{k-1} C \1_k|{\2}_k \cdots {\overset{p_k}{\longrightarrow}}&\quad \cdots \1_{k-1}{\2}_{k-1} C |\1_k\cdots,\\
\label{transq1bis}
\cdots  {\2}_i|\1_k\cdots {\overset{q_k}{\longrightarrow}}&\quad \cdots \1_k | {\2}_i\cdots & \hspace*{-1.5cm} \text{ if }i\neq k,\\
\label{transq2bis}
 \cdots  {\2}_k |\1_k C \1_{k+1}\cdots {\overset{q_k}{\longrightarrow}}&\quad \cdots \1_k | C {\2}_{k+1}\1_{k+1}\cdots,
\end{align}  
\smallskip

{\noindent\bf Note:} In this section we consider the case where $p_k,q_k>0$ for all $k$. This is slightly less general than the conditions in the previous section. We will extend the results to the more general case in~\cref{sec:qi_zero}.

\subsection{Irreducibility}

Recall that we work under the assumptions $p_k, \allowbreak q_k>0$ for all $k$.

\begin{prop}
\label{prop:irred}
Let $L \geq 1$ and $1 \leq n < L$. The exclusion process on $\mathcal{A}_{L,n}$ and on $\Omega_{L,n}$ is irreducible.
\end{prop}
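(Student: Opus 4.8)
The plan is to prove irreducibility by showing that from any configuration one can reach a fixed reference configuration, and conversely; since the rates $p_k,q_k$ are all strictly positive, every transition listed in \cref{transp1bis,transp2bis,transq1bis,transq2bis} is available whenever its combinatorial precondition holds, so I only need to produce, for any $\omega\in\Omega_{L,n}$, a finite sequence of legal moves leading to a chosen normal form and a reverse sequence back. It is convenient to work in $\Omega_{L,n}$ rather than on the torus, and to pick as reference configuration the ``sorted'' word $\omega_0 = \1_1\2_1\cdots\2_1\,\1_2\,\1_3\cdots\1_n$ — i.e. all $L-n$ second-class particles carry label $1$ and sit immediately after $\1_1$, with the remaining $\1_k$'s in a consecutive block. (One may equivalently take $\1_1\1_2\cdots\1_n\2_?\cdots\2_?$; the exact choice is cosmetic.)

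First I would establish that one can bring all the $\1_k$'s together into a consecutive cyclic block $\1_1\1_2\cdots\1_n$. Using the type-\cref{transp1bis} move $\1_k|\2_i\to\2_i|\1_k$ (valid since $i\ne k$ is automatic as long as the $\2$ immediately ahead of $\1_k$ does not carry label $k$) and its reverse \cref{transq1bis}, a first-class particle can hop over any adjacent second-class particle of a different label. The only obstruction is a $\2_k$ sitting directly in front of $\1_k$; but then \cref{transp2bis} applies (it consumes that $\2_k$, relabels a $\2$ in the preceding block to $\2_{k-1}$, and advances $\1_k$), or, if the block $C$ between $\1_{k-1}$ and $\1_k$ is empty, $\1_{k-1}$ and $\1_k$ are already adjacent. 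Iterating, I can shepherd $\1_n$ up behind $\1_{n-1}$, then the pair behind $\1_{n-2}$, and so on, until all $\1$'s form one block; I should check this process terminates, e.g. by a monovariant such as the total cyclic gap between consecutive $\1$'s.

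Next, with the $\1$'s consolidated, every $\2$ particle lies in the single arc strictly between $\1_n$ and $\1_1$. Here I would show any such $\2_i$ can be converted to a $\2_1$: walk $\1_1$ backward over the block of $\2$'s using \cref{transq1bis} until a $\2_1$ is needed in front of it, i.e. use the $k=1$ instances of \cref{transp2bis}/\cref{transq2bis} which create $\2_{1-1}=\2_n$... — more carefully, the relabelling moves \cref{transp2bis} with $k$ ranging cyclically let me change the label of a second-class particle adjacent to the $\1$-block by one unit at a time, so by composing such moves I can set every $\2$ to label $1$, reaching a configuration $\cdots\1_n|\2_1\cdots\2_1|\1_1\cdots$, which is a cyclic shift of $\omega_0$; cyclic shifts are irrelevant on the torus, and within $\Omega_{L,n}$ they are themselves reachable since translation is generated by the dynamics. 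Reversibility of each elementary step (every forward move has a backward counterpart with positive rate, as noted after \cref{rem:dynamics}) then gives paths in both directions, proving strong connectivity of the transition graph, hence irreducibility; the statement for $\mathcal{A}_{L,n}$ follows from the isomorphism \cref{prop:isom}.

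The main obstacle I anticipate is the bookkeeping in the second stage: the relabelling moves \cref{transp2bis,transq2bis} are the only ones that change $\2$-labels, and they are somewhat rigid (they act on the $\2$ immediately adjacent to a specific $\1_k$ and simultaneously shuffle the block $C$), so I must verify that by repeatedly cycling a single $\2$ particle around the $\1$-block I can realize an arbitrary label change without disturbing the already-normalized part — equivalently, that the subgroup of relabelling-and-shuffling moves acts transitively on the fiber over a fixed $\1$-pattern. A clean way to package this is to argue in two halves: (i) any two configurations with the same underlying $\Pi$-image (same positions of $\1$'s, same positions of $\2$'s) are mutually reachable, using only the type-\cref{transp2bis} family run around the $\1$-block; and (ii) any configuration is reachable from one in a fixed $\Pi$-class, using the consolidation of stage one together with \cref{prop:lump} and the known irreducibility of the Evans ASEP on $\Psi_{L,n}$ (which holds for $p_k,q_k>0$). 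Combining (i) and (ii) yields the result, and this split lets me quote the one-dimensional irreducibility instead of re-proving it.
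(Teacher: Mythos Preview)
Your appeal to ``reversibility of each elementary step'' is the genuine gap. Transitions \cref{transp1bis} and \cref{transq1bis} are indeed mutual inverses, but \cref{transp2bis} and \cref{transq2bis} are \emph{not}: the paper states this explicitly just before \cref{rem:dynamics}. Concretely, the move $\1_{k-1}\,C\,\1_k\,\2_k \to \1_{k-1}\,\2_{k-1}\,C\,\1_k$ has no inverse among the four rules; there is no transition sending $\1_{k-1}\,\2_{k-1}\,C\,\1_k$ back to $\1_{k-1}\,C\,\1_k\,\2_k$. Since your first stage (consolidating the $\1$'s) necessarily uses \cref{transp2bis} whenever a $\2_k$ blocks $\1_k$, you cannot simply reverse your path to conclude $\omega_0\to\omega$. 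The paper handles this asymmetry by analyzing the reversed transition graph separately: it observes that the reverses of \cref{transp2bis} and \cref{transq1bis} have the same combinatorial shape as \cref{transp2bis} and \cref{transp1bis} (only the rates differ, which is irrelevant for connectivity), so the forward argument can be rerun verbatim in the reversed graph. The remark in \cref{rem:dynamics}(3) that you cite is a global symmetry relating the chain to a \emph{different} chain with reflected configurations and swapped rates; it does not give edge-reversibility within the original chain.

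A second, softer issue is your two-stage decomposition (consolidate $\1$'s, then relabel $\2$'s). You correctly flag the relabelling stage as the hard part, and your sketch does not actually carry it out: cycling a single $\2$ around the $\1$-block via \cref{transp2bis} changes its label and position simultaneously while shuffling the block $C$, so arguing transitivity on a $\Pi$-fiber is not obviously easier than the original problem. The paper sidesteps this entirely with a cleaner monovariant: define the \emph{excess} of a $\2_i$ to be the cyclic distance from $i$ to the index $k$ of the nearest $\1$ on its left, and the excess of a configuration as the sum over all $\2$'s. A configuration has excess zero iff it is \emph{basic} (only $\2_k$'s between $\1_k$ and $\1_{k+1}$), and whenever the excess is positive one can find a subword $\1_k\,\2_k^{\,t}\,\2_i$ with $i\neq k$, apply \cref{transp2bis} $t$ times and then \cref{transp1bis} once to decrease the excess by one. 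From a basic configuration, \cref{transp2bis} alone consolidates the $\1$'s while preserving basicness, so no separate relabelling is needed.
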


\begin{proof} 
We will show that starting from any $\tau \in \Omega_{L,n}$, we can reach the special configuration 
\[
\tau_0=(\1_1,\dots,\1_n,{\2}_n,\dots,{\2}_n),
\] 
and conversely.

Fix $\tau \in \Omega_{L,n}$. Say that a configuration $\tau'$ is {\em basic} if for all $k$, the particles between $\1_k$ and $\1_{k+1}$ are only ${\2}_k$'s. We claim that starting with $\tau$ and applying forward transitions of type \eqref{transp1bis} and \eqref{transp2bis}, one can reach a basic configuration. 

To show this, define the \emph{excess} of a particle $\2_i$ in a given configuration to be the integer $k\in\{0,1,\ldots,n-1\}$ such that the nearest particle $\1$ to the left of $\2_i$ is $\1_{i+k}$ (recall that indices for particles are understood modulo $n$). Define the {excess} of a configuration to be the sum of all its particles of type $\2$. For example, for $n=4$, $\1_1 {\2}_3{\2}_3{\2}_4\1_2 {\2}_2\1_3 {\2}_3\1_4 {\2}_1$ from \cref{eg:trans} has excess equal to
\[
(2+2+1) + (0) + (0) + (3) = 8.
\]
The excess of a configuration is zero if and only if it is basic. If the excess is $e>0$, then there exists a contiguous subconfiguration of the form 
\[\1_k\underbrace{{\2}_k\dots {\2}_k}_{t\geq 0}{\2}_i\] with $i\neq k$. By applying $t$ times \eqref{transp2bis} and then \eqref{transp1bis} once, we obtain a configuration has excess $e-1$. The claim is thus proved by induction.

We have thus reached a basic configuration $\tau'$. By continuing to apply transitions of type \eqref{transp2bis}, in order to have the particles $\1_i$ occurring consecutively, which is clearly possible, we can reach new basic configuration that is a cyclic shift of the configuration $\tau_0$. Finally, notice that using \eqref{transp2bis} applied successively with $k=n,n-1,\dots,1$ rotates such  configuration to the right, and so we can eventually reach $\tau_0$ itself.

We now have to prove that from $\tau_0$ one can reach any $\tau\in \Omega_{L,n}$. Equivalently, we have to prove that if one reverses the arrows in the dynamics, one can reach $\tau_0$ from $\tau$ in the resulting graph. In this graph, \eqref{transp2bis} and \eqref{transq1bis} become edges
\begin{align}
\label{transp2bis_reversed} \cdots \1_{k}{\2}_{k} C |\1_{k+1}\cdots {\overset{p_{k+1}}{\longrightarrow}}&\quad \cdots \1_{k} C \1_{k+1}|{\2}_{k+1} \cdots,\\
\label{transq1bis_reversed} \cdots \1_k | {\2}_i\cdots {\overset{q_k}{\longrightarrow}}&\quad \cdots  {\2}_i|\1_k\cdots & \hspace*{-1.5cm} \text{ if }i\neq k,
\end{align} 
Note the formal similarity of the two above transitions with \eqref{transp2bis} and \eqref{transp1bis} respectively. In particular, the rates have changed but this does not affect the analysis since these are all nonzero. 

One can apply the same strategy as in the first part of the proof to show that with the transitions \eqref{transp2bis_reversed}, \eqref{transq1bis_reversed} one can reach $\tau_0$ from $\tau$. 
\end{proof}

By \cref{prop:irred}, the Markov chain has a unique stationary distribution. It is necessary invariant under horizontal translation, since the transition rates have the same property; see \cref{rem:dynamics}(1).

\subsection{Weights}
Let $A \in \mathcal{A}_{L,n}$ be a configuration. For $i=1,\dots,n$, let $b_i$ be the column containing $\1$ in row $i$ of $A$. We have therefore $b_1 < \cdots < b_n$ cyclically.

\begin{defn}
\label{Ck-defi}
Let $k\in\{1,\dots,n\}$. We define $C_k \equiv C_k(A)$ be the(possibly empty) open integer interval $(b_k,b_{k+1}) 
= \{b_k + 1, \dots, b_{k+1} - 1\}$.
\end{defn}

Let $j\in C_k$. We define the weight of a particle $\2$ at position $(i,j)$, $j \in C_k$ to be ($i,k$ are taken to be in $\{1,\ldots,n\}$ here):
\begin{equation}
\label{wt-2}
w_\2(i,k) = \begin{cases}
p_1 \cdots p_{i-1} q_{i+1} \cdots q_k p_{k+1} \cdots p_n & 1 \leq i \leq k, \\
q_1 \cdots q_{k} p_{k+1} \cdots p_{i-1} q_{i+1} \cdots q_n & k < i \leq n.
\end{cases}
\end{equation}

\begin{defn}[Stationary weight] 
The weight $\wt(A)$ of $A \in \mathcal{A}_{L,n}$ is 
\begin{equation}
\label{wt-omega}
\wt(A) = \prod_{k=1}^n \prod_{\substack{j\in C_k \\ A_{i,j}=\2}} w_\2(i,k),
\end{equation}
the product of weights associated to all $\2$'s.
\end{defn}

For example, the weight of the configuration in \cref{eg:trans} and \cref{fig:multiline} is 
\begin{align*}
\wt(\1_1 {\2}_3{\2}_3{\2}_4 \!\1_2 \!{\2}_2 \!\1_3 \! {\2}_3 \! \1_4 \!{\2}_1 \!)
&=
\underbrace{(q_1 p_2 q_4)^2 (q_1 p_2 p_3)}_{C_1}
\underbrace{(p_1 p_3 p_4)}_{C_2}
\underbrace{(p_1 p_2 p_4)}_{C_3}
\underbrace{(q_2 q_3 q_4)}_{C_4} \\
 &= p_1^2 p_2^4 p_3^2 p_4^2 q_1^3 q_2 q_3 q_4^3.
\end{align*}

\begin{rem}
We make a curious observation about the weights in \eqref{wt-2}. The determinant of the matrix formed by these weights has a very nice formula,
\[
\det (w_\2(i,k))_{1 \leq i,k \leq n} = \left( p_1 \cdots p_n - q_1 \cdots q_n \right)^{n-1}.
\]
It can be computed by simple row operations transforming the matrix into a lower triangular matrix. 
The factor on the right hand side will appear frequently in \cref{sec:denscurr} when we calculate the currents.
\end{rem}

\subsection{Stationary distribution}

We can now state the exact form of the stationary distribution of the exclusion process $\mathcal{A}_{L,n}$. Note that we have to assume that all rates $p_k,q_k$ are nonzero here. The special case where some rates vanish is treated in the next section.

\begin{thm}
\label{thm:ssrefined}
Let $L \geq 1$ and $1 \leq n < L$ and suppose $p_k,q_k>0$ for all $1 \leq k \leq n$. 
Then the stationary probability $\hat\pi(A)$ of the configuration $A$ for the exclusion process on $\mathcal{A}_{L,n}$ is proportional to $\wt(A)$.
\end{thm}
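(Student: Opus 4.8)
The plan is to verify that the proposed weight function $\wt$ satisfies the master equation for the stationary distribution of the exclusion process, working in the isomorphic model $\Omega_{L,n}$ (or equivalently $\mathcal{A}_{L,n}$), which by \cref{prop:irred} has a unique stationary distribution. That is, I would show that for every configuration $A$,
\[
\wt(A) \sum_{B : A \to B} \mathrm{rate}(A \to B) \;=\; \sum_{B : B \to A} \wt(B)\, \mathrm{rate}(B \to A),
\]
where the sums are over the four transition types \eqref{transp1bis}--\eqref{transq2bis}. Since $\hat\pi$ is unique, any positive function satisfying this balance equation (for all $A$) must be proportional to $\hat\pi$, and $\wt$ is manifestly positive when all $p_k,q_k>0$.

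The first step is bookkeeping: for a fixed configuration $A$, enumerate the outgoing transitions. As \cref{eg:trans} illustrates, there are exactly $2n$ of them — one of type $p_k$ and one of type $q_k$ for each $k$ — so the left-hand side is $\wt(A)\sum_{k=1}^n (p_k + q_k)$. (One should check that the two transitions associated to a given $k$ are always distinct and always available; this uses $n<L$ and the cyclic order of the $\1_k$'s.) Dually, I would enumerate the incoming transitions: reversing \eqref{transp1bis}--\eqref{transq2bis}, there are again $2n$ of them, and for each one I must compute the ratio $\wt(B)/\wt(A)$ where $B$ is the source. The heart of the proof is then the identity
\[
\sum_{k=1}^n (p_k+q_k) \;=\; \sum_{\text{incoming } B\to A} \frac{\wt(B)}{\wt(A)}\,\mathrm{rate}(B\to A).
\]
The key computational input is that when a single $\2_i$ hops across a single $\1_k$ (transitions \eqref{transp1bis}, \eqref{transq1bis}), the weight $w_\2(i,k)$ changes by a clean factor: moving past $\1_k$ forward multiplies by $p_k/q_k$ or $q_k/p_k$ depending on the relative cyclic position of $i$ and $k$ — this is visible directly from the two-case formula \eqref{wt-2}, since incrementing $k$ in the interval index swaps a $p$ for a $q$ in exactly one slot. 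The ``long'' transitions \eqref{transp2bis}, \eqref{transq2bis}, which move a whole block $C$ and relabel a $\2_k$ to $\2_{k-1}$, require checking that the product of weight-changes over all particles in $C$ together with the relabelled particle telescopes correctly; the definition of $C_k$ as the open interval between consecutive $\1$'s is exactly what makes this work.

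I would organize the verification by grouping incoming transitions cleverly rather than checking all $2n$ weight ratios blindly. A natural grouping: for each $k$, pair the incoming transition that undoes an outgoing $p_k$-move with the one that undoes an outgoing $q_k$-move at the ``same location,'' and show the pair contributes exactly $p_k+q_k$ after dividing by $\wt(A)$; the telescoping should reduce each such contribution to a statement like $\frac{q_k}{p_k}\cdot p_k + \frac{p_k}{q_k}\cdot q_k$-type cancellation, or an identity of the form $x\cdot\frac{1}{x}$ appearing $n$ times. The main obstacle I anticipate is the combinatorial case analysis forced by transitions of type \eqref{transp2bis}/\eqref{transq2bis} together with the cyclic boundary (indices mod $n$, and configurations where some block $C$ is empty or where a $\1_k$ sits adjacent to another $\1$): one must be careful that the incoming transitions are correctly identified in these degenerate cases and that no weight ratio is miscounted when the ``moving block'' wraps around column $L$. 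A clean way to control this is to fix the normalization $A\in\mathcal{A}'_{L,n}$ (i.e. $w_1=\1_1$), handle transitions not involving the boundary column first, and then treat the wrap-around transitions using the horizontal translation invariance noted in \cref{rem:dynamics}(1). Once the balance equation is established for all $A$, \cref{prop:irred} closes the argument.
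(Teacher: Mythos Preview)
Your overall strategy---verify the master equation and invoke \cref{prop:irred}---is the paper's strategy, and your observation that the ratio $w_\2(i,k)/w_\2(i,k-1)$ collapses to a single $p_k/q_k$ factor is exactly the engine of the computation. However, two concrete claims in your bookkeeping are false, and they break the proposed organization.

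First, there are not always $2n$ outgoing transitions, and the condition $n<L$ does not salvage this. If $\1_k$ and $\1_{k+1}$ occupy adjacent columns (i.e.\ $C_k=\emptyset$), then the forward $p_k$-move of $\1_k$ and the backward $q_{k+1}$-move of $\1_{k+1}$ are both blocked; the configuration $\1_1\1_2\2_1\2_1\in\Omega'_{4,2}$ has only two outgoing transitions, with rates $q_1$ and $p_2$. So the left-hand side is $\wt(A)\sum_{k:\,C_k\neq\emptyset}(p_k+q_{k+1})$, not $\wt(A)\sum_k(p_k+q_k)$.

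Second, your grouping ``pair the incoming transition that undoes an outgoing $p_k$-move with the one that undoes an outgoing $q_k$-move'' presupposes a reversibility that the chain does not have: the paper notes explicitly that transitions \eqref{transp2bis} and \eqref{transq2bis} are not inverses of each other. Concretely, take $A=\1_1\2_2\1_2\2_1\in\Omega'_{4,2}$. Its four outgoing moves have rates $p_1,q_1,p_2,q_2$, but its four \emph{incoming} moves have rates $q_1,q_1,q_2,q_2$ (two distinct predecessors for each backward move, none for the forward ones). There is no ``$\frac{q_k}{p_k}\cdot p_k+\frac{p_k}{q_k}\cdot q_k$'' cancellation available.

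The paper's repair is to organize by the interval $C_k$ rather than by the particle $\1_k$. For each nonempty $C_k$ the outgoing rate is $p_k+q_{k+1}$, and the incoming transitions ``into $C_k$'' are determined by the identity of the $\2$ immediately right of $\1_k$ and the $\2$ immediately left of $\1_{k+1}$: a short case analysis (is that $\2$ indexed $k$, resp.\ $k{+}1$, or not?) yields exactly $p_k\wt(A)$ and $q_{k+1}\wt(A)$ in every case. One pleasant surprise relative to your outline: the ``long'' transitions require no telescoping over the block $C$, because every particle of $C$ stays in the same interval $C_k$ and hence keeps its weight; only the single relabelled $\2$ changes weight.
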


\begin{proof}
Since the stationary probabilities are unique by \cref{prop:irred}, it is enough to verify
the balance equation,
\begin{equation}
\label{eq:balance}
\sum_{\tau \in \mathcal{A}_{L,n}} \hat\pi(A)\text{rate}(A \to \tau)
=
\sum_{\tau \in \mathcal{A}_{L,n}} \hat\pi(\tau)\text{rate}(\tau \to A),
\end{equation}

for every configuration $A$. Since all transitions are initiated by particles of type $\1$, it suffices to look at the positions of these particles.
Moreover, every particle of type $\1$ can move to its current location in at most two ways, one from the left and one from the right.

Fix $1 \leq k \leq n$. We will focus on transitions affecting the positions in $C_k$, defined above. If $C_k$ is empty, there cannot be any transitions, either outgoing or incoming, affecting $C_k$.

Suppose $C_k$ is nonempty. Then the outgoing weight of transitions from $A$ is given by $(p_k+q_{k+1}) \wt(A)$. We focus first on the particle of type $\1$ in row $k$. Let $n_k$ be its column. 
The incoming transition that brings $\1$ to this position depends on the row $i$ of the $\2$ in column $n_k + 1$. 
Suppose $i \neq k$. 
Let $A_1$ be the (unique) configuration in the
state space which goes to $A$ with rate $q_k$. Then $A_1$ is obtained from $A$ by switching columns $n_k$ and $n_k +1$. 
In that case,
\[
\frac{\hat{\pi}(A_1)}{\hat{\pi}(A)} = 
\begin{cases}
\frac{\ds p_1 \cdots p_{i-1} q_{i+1} \cdots q_{k-1} p_{k} \cdots p_n}
{\ds p_1 \cdots p_{i-1} q_{i+1} \cdots q_k p_{k+1} \cdots p_n} & i < k, 
\\[0.25cm]
\frac{\ds q_1 \cdots q_{k-1} p_{k} \cdots p_{i-1} q_{i+1} \cdots q_n}
{\ds q_1 \cdots q_{k} p_{k+1} \cdots p_{i-1} q_{i+1} \cdots q_n} & i > k.
\end{cases}.
\]
Thus, $q_k \hat{\pi}(A_1) = p_k \hat{\pi}(A)$.
Suppose the particle of type $\1$ in row $k+1$ is at column $n_{k+1}$.
If $i=k$, then the incoming transition comes from the configuration $A_2$, in which column $n_k + 1$ is moved to column $n_{k+1}$, the particle of type $\2$ is moved to row $k+1$, and all intermediate columns are shifted left. This transition happens with rate $p_{k+1}$.
Then 
\[
\frac{\hat{\pi}(A_2)}{\hat{\pi}(A)} = 
\frac{p_1 \cdots p_{k} p_{k+2} \cdots p_n}
{p_1 \cdots p_{k-1} p_{k+1} \cdots p_n},
\]
and $p_{k+1} \hat{\pi}(A_2) = p_k \hat{\pi}(A)$.
Thus, the incoming weight of the particle of type $\1$ in row $k$ affecting $C_k$ is the same as the outgoing weight.

We now look at the particle of type $\1$ in row $k+1$. 
The incoming transition that brings $\1$ to this position depends on the row $j$ of the $\2$ in column $n_{k+1} - 1$. If $j \neq k+1$, the incoming transition comes from $A_3$, which is obtained from $A$ by switching columns $n_{k+1}$ and $n_{k+1} - 1$, with rate $p_{k+1}$. In that case,
\[
\frac{\hat{\pi}(A_3)}{\hat{\pi}(A)} = 
\begin{cases}
\frac{\ds p_1 \cdots p_{j-1} q_{j+1} \cdots q_{k+1} p_{k+2} \cdots p_n}
{\ds p_1 \cdots p_{j-1} q_{j+1} \cdots q_k p_{k+1} \cdots p_n} & j \leq k, \\[0.25cm]
\frac{\ds q_1 \cdots q_{k+1} p_{k+2} \cdots p_{j-1} q_{j+1} \cdots q_n}
{\ds q_1 \cdots q_{k} p_{k+1} \cdots p_{j-1} q_{j+1} \cdots q_n} & j > k+1.
\end{cases}.
\]
Thus, $p_{k+1} \hat{\pi}(A_3) = q_{k+1} \hat{\pi}(A)$.
If $j = k+1$, then the incoming transition comes from the configuration $A_4$, in which column $n_{k+1} - 1$ is moved to column $n_{k}$, the particle of type $\2$ is moved to row $k$, and all intermediate columns are shifted right. This transition happens with rate $q_{k}$.
Then 
\[
\frac{\hat{\pi}(A_4)}{\hat{\pi}(A)} = 
\frac{q_1 \cdots q_{k-1} q_{k+1} \dots q_n}
{q_1 \cdots q_{k} q_{k+2} \dots q_n},
\]
and $q_{k} \hat{\pi}(A_4) = q_{k+1} \hat{\pi}(A)$.
We have thus matched all the incoming and outgoing transitions affecting $C_k$, and this argument holds for all $k$. Thus, we have proved that the weight function in \eqref{wt-omega} satisfies the master equation.
\end{proof}

\begin{eg}
\label{eg:ss-42}
Here is the set of restricted configurations in $\mathcal{A}'_{4,2}$ from \cref{fig:small_example}, together with their weights:
\begin{center}
\includegraphics[width=\textwidth]{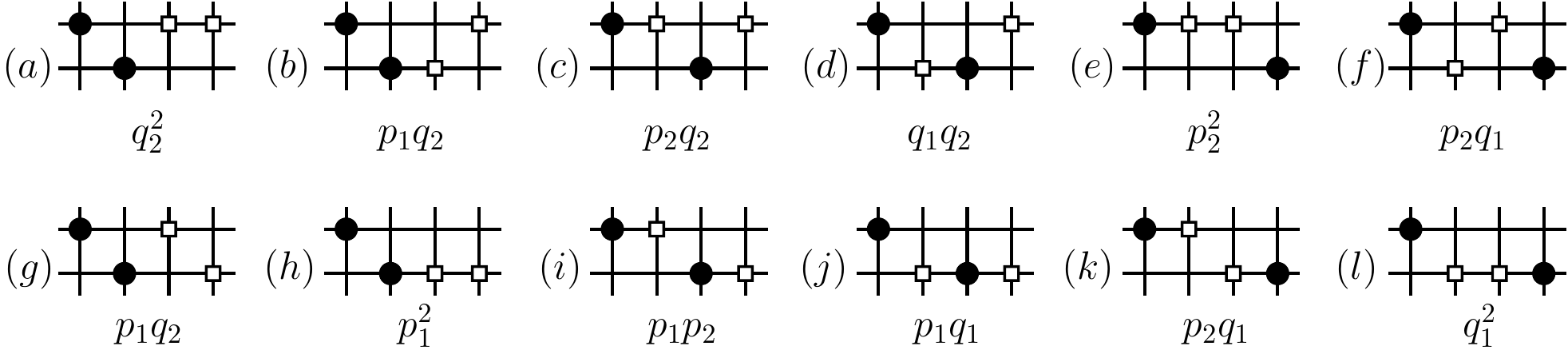}
\end{center}
The balance equation \eqref{eq:balance} can be checked at each of them. Consider the case of configuration (a): Incoming transitions occur from (c) with rate $q_2$ and from a translated version of (d) with rate $q_1$. Outgoing transitions occur with rates $q_1$ and $p_2$, and the total weights match.
\end{eg}

For convenience, we define
\begin{equation}
\label{W-def}
W_\2(k) = \sum_{j=1}^n w_\2(j,k).
\end{equation}

We now apply this result to the one-dimensional model $\Psi_{L,n}$ of \cref{sec:onedim}. For a configuration $\tau \in \Psi_{L,n}$ with $\tau_1 = \1_1$, let $c_i$ count the number of vacancies between $\1_i$ and $\1_{i+1}$. This completely encodes the configuration up to rotation. The proof of the following result is then a simple consequence of \cref{thm:ssrefined} and \cref{cor:lump}.

\begin{cor}[{\cite{evans-1996}}]
\label{cor:ss}
The stationary distribution $\pi$ of the exclusion process on $\Psi_{L,n}$ is given as follows: for any $\tau \in \Psi_{L,n}$ with tuple $(c_1,\ldots,c_n)$ summing to $L-n$, the stationary probability $\pi(\tau)$ is
proportional to $\ds\prod_{k=1}^n (W_\2(k))^{c_k}$.
\end{cor}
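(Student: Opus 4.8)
The plan is to combine the explicit stationary weights on $\mathcal{A}_{L,n}$ from \cref{thm:ssrefined} with the lumping identity of \cref{cor:lump}, so that the whole computation reduces to summing $\wt$ over a single fiber $\Pi^{-1}(\tau)$ and observing that this sum factors. Via the isomorphism of \cref{prop:isom} we identify $\Omega_{L,n}$ with $\mathcal{A}_{L,n}$, so that $\hat\pi(\omega)$ is proportional to $\wt$ of the corresponding array, with the same constant of proportionality for all $\omega$ (using that $\hat\pi$ is the unique stationary distribution by \cref{prop:irred}). Then \cref{cor:lump} gives $\pi(\tau) \propto \sum_{\omega \in \Pi^{-1}(\tau)} \wt(\omega)$. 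Since both $\pi$ and the claimed formula are invariant under rotation of the ring — the disordered ASEP rates are attached to the particles $\1_k$, not to sites, so rotation commutes with the generator — it suffices to prove the identity for the representative $\tau$ with $\tau_1 = \1_1$.

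Next I would describe the fiber $\Pi^{-1}(\tau)$ explicitly. Fixing $\tau$ with $\tau_1 = \1_1$ pins down the cyclic positions and labels of all first-class particles $\1_1,\dots,\1_n$, and hence the open intervals $C_k$ of \cref{Ck-defi} between the columns of $\1_k$ and $\1_{k+1}$; by construction $|C_k| = c_k$, the number of vacancies of $\tau$ lying in that gap. An element of $\Pi^{-1}(\tau)$ is obtained from $\tau$ precisely by replacing each vacancy by some $\2_i$, and in $\Omega_{L,n}$ the second-class labels are completely unconstrained: each of the $c_k$ positions in $C_k$ may independently carry any $\2_i$ with $i \in \{1,\dots,n\}$. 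Thus $\Pi^{-1}(\tau)$ is in bijection with the set of functions assigning to each vacancy of $\tau$ a row label in $\{1,\dots,n\}$, of cardinality $\prod_{k=1}^n n^{c_k} = n^{L-n}$.

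Finally I would carry out the factorization. By \eqref{wt-omega}, $\wt$ of a configuration is a product over all its $\2$'s of the local weights $w_\2(i,k)$, where $k$ indexes the gap $C_k$ containing the $\2$ and $i$ is its row; crucially $w_\2(i,k)$ depends only on the pair $(i,k)$ and not on the placement of the other particles. Summing over the fiber therefore splits as a product over the gaps and, within each gap, over the independent position labels:
\begin{equation*}
\sum_{\omega \in \Pi^{-1}(\tau)} \wt(\omega)
= \prod_{k=1}^n \ \prod_{j \in C_k} \left( \sum_{i=1}^n w_\2(i,k) \right)
= \prod_{k=1}^n \left( W_\2(k) \right)^{c_k},
\end{equation*}
using \eqref{W-def} and $|C_k| = c_k$. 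Together with $\pi(\tau) \propto \sum_{\omega \in \Pi^{-1}(\tau)} \wt(\omega)$, this is exactly the asserted formula. There is no serious obstacle here; the only bookkeeping point is to match the gaps $C_k$ of the two-dimensional/colored picture correctly with the vacancy-gaps $c_k$ of the one-dimensional chain under $\Pi$, and to check that the normalization $\tau_1 = \1_1$ is harmless — both of which are immediate once the definitions are unwound.
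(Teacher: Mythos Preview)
Your proposal is correct and follows exactly the approach indicated in the paper, which simply states that the result is a direct consequence of \cref{thm:ssrefined} and \cref{cor:lump}. You have faithfully unpacked these two ingredients---identifying the fiber $\Pi^{-1}(\tau)$ as an independent choice of row label at each vacancy and factoring the sum of weights over the gaps $C_k$---which is precisely the implicit computation the paper is pointing to.
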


The proof in~\cite{evans-1996} uses a matrix ansatz to come up with this product form.

\begin{eg} 
For $n=4$, $W_\2(1)=p_2p_3p_4+q_1p_2p_3+q_4q_1p_2+q_3q_4q_1$ and $W_\2(i)$ for $2 \leq i \leq 4$ are obtained by shifting indices. The stationary probability $\pi(\tau)$ of the configuration $\tau$ in \cref{fig:eg-conf}, which is encoded by the tuple $(3,1,2,0)$, is thus proportional to
$W_\2(1)^3 W_\2(2) W_\2(3)^2$.
\end{eg}

\subsection{Partition function}

The {\em restricted partition function} is defined as
\begin{equation}
Z_{L,n} = \sum_{\substack{A \in \mathcal{A}_{L,n} \\ A_{1,1} = \1}} \wt(A).
\end{equation}
$Z_{L,n}$ is a polynomial in the variables $p_1,\dots,p_n, q_1, \dots, q_n$, homogeneous of degree $L-n$.
By the translation invariance in \cref{rem:dynamics}(1), the full {\em partition function} is $L Z_{L,n}$.
Recall that, for $f(x) = \sum_i a_i x^i$ a polynomial or formal power series in the variable $x$, the notation $[x^i] f(x)$ stands for the coefficient of $x^i$ in $f(x)$, namely $a_i$.

\begin{thm}[{\cite{evans-1996}}]
\label{thm:pf}
The restricted partition function $Z_{L,n}$ is given by:
\[
Z_{L,n} = 
\sum_{\substack{c_1,\ldots,c_n \geq 0\\ c_1+\cdots+c_n=L-n}}
W_\2(1)^{c_1} \cdots W_\2(n)^{c_n}
=[x^{L-n}]\left( \prod_{k=1}^n \frac{1}{1-W_\2(k) x} \right).
\]
\end{thm}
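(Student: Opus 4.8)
The plan is to compute $Z_{L,n}$ directly from the explicit stationary weights in \cref{thm:ssrefined}, organized according to the combinatorial structure of restricted configurations recorded via the reformulation $\Omega'_{L,n}$. Recall that a restricted configuration $A \in \mathcal{A}'_{L,n}$ corresponds to a word in $\Omega'_{L,n}$ beginning with $\1_1$, which is determined by: (i) the cyclic gaps $c_1,\dots,c_n \geq 0$ with $\sum_k c_k = L-n$ between consecutive first-class particles $\1_k$ and $\1_{k+1}$ (equivalently, the sets $C_k$ of columns strictly between the $\1$'s in rows $k$ and $k+1$, so $|C_k| = c_k$), together with (ii) an independent choice, for each column $j \in C_k$, of which row $i \in \{1,\dots,n\}$ carries the $\2$ sitting in that column. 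By the weight formula \eqref{wt-omega}, a $\2$ in column $j \in C_k$ and row $i$ contributes exactly $w_\2(i,k)$, and these contributions multiply across all $\2$'s with no interaction between different columns.

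First I would fix the tuple $(c_1,\dots,c_n)$ and sum $\wt(A)$ over all configurations $A$ realizing it. Since the row assignment in each column of $C_k$ is free and contributes a factor $w_\2(i,k)$, summing over the row assignment of a single column of $C_k$ yields $\sum_{i=1}^n w_\2(i,k) = W_\2(k)$ by definition \eqref{W-def}; the $c_k$ columns of $C_k$ are independent, giving $W_\2(k)^{c_k}$; and the $n$ blocks $C_1,\dots,C_n$ are independent, giving $\prod_{k=1}^n W_\2(k)^{c_k}$. This establishes the first equality,
\[
Z_{L,n} = \sum_{\substack{c_1,\ldots,c_n \geq 0\\ c_1+\cdots+c_n=L-n}} W_\2(1)^{c_1} \cdots W_\2(n)^{c_n}.
\]
The second equality is then the standard generating-function identity: expanding $\prod_{k=1}^n (1 - W_\2(k)x)^{-1} = \prod_{k=1}^n \sum_{c_k \geq 0} W_\2(k)^{c_k} x^{c_k}$ and extracting $[x^{L-n}]$ collects exactly the terms with $c_1 + \cdots + c_n = L-n$.

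The only genuine point requiring care is the bookkeeping in the first step — specifically, confirming that a restricted configuration is faithfully and bijectively encoded by the pair $\big((c_k)_k, (\text{row assignments})\big)$, and that the $\2$-weight is genuinely a product over columns with the factor in column $j \in C_k$ depending only on $(i,k)$ and not on neighboring columns. This is immediate from \cref{config-defi} (one $\1$ per row fixes the $b_k$'s once the gaps and the normalization $b_1 = 1$ are chosen; one particle per column forces every non-$\1$ column to hold a single $\2$ whose row is unconstrained) together with \eqref{wt-omega}, so I do not expect any real obstacle here; the argument is essentially a repackaging of the structure already laid out in \cref{sub:colored} and \cref{sub:partitions}. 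One should also note that the restriction $A_{1,1} = \1$ is harmless: it simply pins down the rotational representative, exactly as the $(c_1,\dots,c_n)$ encoding in \cref{cor:ss} does, so summing over $\mathcal{A}'_{L,n}$ matches summing over tuples.
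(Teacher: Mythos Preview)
Your proposal is correct and takes essentially the same approach as the paper: the paper's proof simply asserts that the first equality ``follows from the definition of the partition function'' (relying implicitly on the decomposition by gap sizes and row assignments already noted in the lead-up to \cref{cor:ss}) and that the second is the standard geometric-series expansion, which is exactly what you spell out in detail.
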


\begin{proof}
The first equality follows from the definition of the partition function. The second one is an immediate consequence of the expansion of the rational function as a series in $x$.
\end{proof}

For example, for $L=4$ and $n=2$, $Z_{4,2}$  is the coefficient of $x^2$ in $(1-(p_1+q_2)x)^{-1}(1-(p_2+q_1)x)^{-1}$, that is
\[
Z_{4,2} = (p_2+q_1)^2+(p_1+q_2) (p_2+q_1)+(p_1+q_2)^2.
\]
It corresponds as expected to the sum of the weights in \cref{eg:ss-42}.

To end this section, we give two special cases that are easy to prove: first we consider the case where the $n$ particles have identical rates, then we consider the case where particles have symmetric jumps. 

\begin{prop}
\label{prop:identical_particles}
If we set $p_i = p$ and $q_i = q$ for all $i$, then
\[
Z_{L,n} = \binom{L-1}{n-1} [n]_{p,q}^{L-n},
\]
where $[n]_{p,q} = p^{n-1} + p^{n-2} q + \cdots + p q^{n-2} + q^{n-1}$.
\end{prop}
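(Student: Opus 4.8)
The plan is to specialize the formula for $Z_{L,n}$ in \cref{thm:pf} to the case $p_i = p$, $q_i = q$ for all $i$. First I would compute $W_\2(k)$ in this homogeneous case. From the definition \eqref{wt-2}, the weight $w_\2(i,k)$ is a monomial in which exactly $n-1$ of the variables $p_2,\dots$ or $q$'s appear, with the pattern depending only on the relative positions of $i$ and $k$; setting all $p_i = p$ and $q_i = q$, each such monomial becomes $p^a q^{n-1-a}$ for some $a$ depending on $i$. Summing over $i = 1,\dots,n$ as in \eqref{W-def}, one checks that each power $p^a q^{n-1-a}$ for $a = 0,1,\dots,n-1$ arises exactly once, so that $W_\2(k) = p^{n-1} + p^{n-2}q + \cdots + q^{n-1} = [n]_{p,q}$, independently of $k$.

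Once this is established, the first expression in \cref{thm:pf} becomes
\[
Z_{L,n} = \sum_{\substack{c_1,\dots,c_n \geq 0 \\ c_1 + \cdots + c_n = L-n}} [n]_{p,q}^{c_1 + \cdots + c_n} = [n]_{p,q}^{L-n} \cdot \#\{(c_1,\dots,c_n) \in \bZ_{\geq 0}^n : c_1 + \cdots + c_n = L-n\}.
\]
The number of such weak compositions is $\binom{L-n+n-1}{n-1} = \binom{L-1}{n-1}$ by the standard stars-and-bars count, which yields the claimed formula. Alternatively, one can argue directly from the generating-function form: with all $W_\2(k)$ equal to $[n]_{p,q}$, the product $\prod_{k=1}^n (1 - W_\2(k)x)^{-1} = (1 - [n]_{p,q}x)^{-n}$, and extracting $[x^{L-n}]$ via the negative binomial series gives $\binom{L-n+n-1}{n-1}[n]_{p,q}^{L-n} = \binom{L-1}{n-1}[n]_{p,q}^{L-n}$.

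The only step requiring genuine verification is the claim that $W_\2(k) = [n]_{p,q}$ in the homogeneous specialization; everything else is a routine combinatorial count. Concretely, I would note that for $1 \leq i \leq k$ the monomial $w_\2(i,k)$ carries $q$'s in positions $i+1,\dots,k$ (that is $k-i$ factors of $q$) and $p$'s elsewhere among the $n-1$ active indices, giving $p^{n-1-(k-i)} q^{k-i}$; as $i$ ranges over $1,\dots,k$ the exponent $k-i$ ranges over $0,\dots,k-1$. Similarly, for $k < i \leq n$ the monomial carries $q$'s in positions $1,\dots,k$ and $i+1,\dots,n$ (that is $k + (n-i)$ factors of $q$), giving $p^{(i-1-k)}q^{k+n-i}$; as $i$ ranges over $k+1,\dots,n$ the exponent $k+n-i$ ranges over $n-1$ down to $k$. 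Together these cover each exponent in $\{0,1,\dots,n-1\}$ exactly once, so $W_\2(k) = \sum_{a=0}^{n-1} p^{n-1-a} q^{a} = [n]_{p,q}$ as required.
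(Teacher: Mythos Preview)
Your proposal is correct and follows exactly the approach the paper implicitly intends: the paper does not spell out a proof but presents \cref{prop:identical_particles} as an ``easy to prove'' specialization immediately after \cref{thm:pf}, and your argument---verifying that $W_\2(k)=[n]_{p,q}$ for all $k$ via \eqref{wt-2} and then counting weak compositions---is the natural execution of that specialization. The detailed check that the $q$-exponents $k-i$ (for $1\le i\le k$) and $k+n-i$ (for $k<i\le n$) cover $\{0,\dots,n-1\}$ exactly once is accurate and is the only non-routine step.
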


Recall that the {\em elementary symmetric polynomial} 
$e_k(x_1,\dots,x_j)$, for $1 \leq k \leq j$ is given by
\begin{equation}
e_k(x_1,\dots,x_j) = \sum_{1 \leq i_1 < i_2 < \cdots < i_k \leq j} 
x_{i_1} x_{i_2} \dots x_{i_k}.
\end{equation}

\begin{prop}
\label{prop:symmetric_particles}
If we set $q_i = p_i$ for all $i$, then
\[
Z_{L,n} = \binom{L-1}{n-1} e_{n-1}(p_1,\dots,p_n)^{L-n}.
\]
\end{prop}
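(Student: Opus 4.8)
The natural route is to start from the first formula for the partition function in \cref{thm:pf}, namely
\[
Z_{L,n} = \sum_{\substack{c_1,\ldots,c_n \geq 0\\ c_1+\cdots+c_n=L-n}} W_\2(1)^{c_1} \cdots W_\2(n)^{c_n},
\]
and simply compute $W_\2(k)$ under the specialization $q_i = p_i$. The point is that each individual weight $w_\2(i,k)$ of \eqref{wt-2} collapses to something independent of $k$: in the case $1\le i\le k$ the factor $q_{i+1}\cdots q_k$ becomes $p_{i+1}\cdots p_k$, so $w_\2(i,k) = p_1\cdots p_{i-1}\,p_{i+1}\cdots p_n$, and in the case $k<i\le n$ the factors $q_1\cdots q_k$ and $q_{i+1}\cdots q_n$ become $p_1\cdots p_k$ and $p_{i+1}\cdots p_n$, giving again $w_\2(i,k) = p_1\cdots p_{i-1}\,p_{i+1}\cdots p_n$. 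I would check the boundary cases $i=1$, $i=k$, $i=n$ explicitly (each amounts to an empty product disappearing) to be safe. Thus $w_\2(i,k) = \prod_{j\neq i} p_j$ for every $k$.

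Summing over $i$ via the definition \eqref{W-def} then gives
\[
W_\2(k) = \sum_{i=1}^n \prod_{j\neq i} p_j = e_{n-1}(p_1,\dots,p_n)
\]
for all $k$, again independent of $k$. Substituting this into the compositional sum above, every monomial $W_\2(1)^{c_1}\cdots W_\2(n)^{c_n}$ equals $e_{n-1}(p_1,\dots,p_n)^{c_1+\cdots+c_n} = e_{n-1}(p_1,\dots,p_n)^{L-n}$, so
\[
Z_{L,n} = e_{n-1}(p_1,\dots,p_n)^{L-n}\cdot \#\Bigl\{(c_1,\dots,c_n)\in\bZ_{\geq 0}^n : \textstyle\sum_i c_i = L-n\Bigr\}.
\]
The number of weak compositions of $L-n$ into $n$ parts is $\binom{(L-n)+(n-1)}{n-1} = \binom{L-1}{n-1}$, which yields the claimed formula.

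There is essentially no obstacle here beyond the one observation that the specialization $q_i=p_i$ makes $w_\2(i,k)$ lose its dependence on $k$ (and equal the $i$-th ``monomial'' $\prod_{j\ne i}p_j$); once that is seen, the rest is the same bookkeeping as in \cref{prop:identical_particles}, with $e_{n-1}(p_1,\dots,p_n)$ playing the role that $[n]_{p,q}$ plays there. If desired one could also phrase the argument through the generating-function form $[x^{L-n}]\prod_k (1-W_\2(k)x)^{-1}$ of \cref{thm:pf}, which becomes $[x^{L-n}](1-e_{n-1}x)^{-n}$ and gives the binomial coefficient directly from the negative binomial series; I would likely include this one-line alternative as a remark.
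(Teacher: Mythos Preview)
Your argument is correct. The paper does not actually supply a proof of this proposition; it merely introduces \cref{prop:identical_particles} and \cref{prop:symmetric_particles} as ``two special cases that are easy to prove'' immediately following \cref{thm:pf}, so the route you take---specializing $q_i=p_i$ in \eqref{wt-2}, observing $W_\2(k)=e_{n-1}(p_1,\dots,p_n)$ for every $k$, and then counting weak compositions in the first formula of \cref{thm:pf}---is exactly the intended one.
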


It is somewhat surprising that we obtain a manifestly symmetric function in the $p_i$'s even though a priori we should only expect the partition function to be symmetric under cyclic permutations.

\section{Some totally asymmetric particles} 
\label{sec:qi_zero}

In this section we consider the exclusion process on $\mathcal{A}_{L,n}$, or equivalently $\Omega_{L,n}$, where some parameters $q_i$ are equal to zero. This is not immediately a special case of the results of the previous section. Indeed, in this case, the chain on $\Omega_{L,n}$ is not irreducible\footnote{This can be seen directly, and also from Theorem~\ref{thm:ssrefined}: if some $q_i$ vanishes then certain weights vanish, which cannot happen for the stationary distribution of an ergodic Markov chain.} any more. Hence, we need to modify some of the results of the \cref{sec:statdist}.

\begin{rem} 
The case where some parameters $p_i$ are zero is treated similarly by symmetry. Also, if there exist $i_1,i_2$ such that $p_{i_1}=q_{i_2}=0$, then even the one-dimensional ASEP of \cref{sec:onedim} is not irreducible, and so we are not interested in this case.
\end{rem}

\begin{defn} Let $I\subseteq\{1,\ldots,n\}$ be the set of indices $k$ such that $q_k=0$. We define $\Omega^I \equiv \Omega^I_{L,n}$ to be the subset of states $\tau \in \Omega_{L,n}$ such that $\wt(\tau)\neq 0$.
Similarly, let $\Omega'^{I}$ by the subset of $\Omega^I$ consisting of the states $\tau=(\tau_1,\tau_2,\dots,\tau_L)$ such that $\tau_1 = \1_1$.

\end{defn}

Of course $\Omega^{\emptyset}=\Omega$. Moreover we have
\begin{equation}
\label{eq:Omega_I_intersection}
\Omega^I=\cap_{i\in I}\Omega^{\{i\}}.
\end{equation}
Indeed, this follows immediately from the fact that $\wt(\omega)$ is a monomial in the $q_i$'s.

We thus consider the case where $I$ has cardinality $1$. 
We also assume $I=\{1\}$ without loss of generality because of translational symmetry in \cref{rem:dynamics}(1).

\begin{prop}
\label{prop:ta_description1} 
$\Omega'^{\{1\}}$ is the set of states starting with $\1_1$ such that for any $i$, particles of type $\2_i$ are only allowed to occur to the right of particle $\1_{i}$. 
\end{prop}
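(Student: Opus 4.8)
The plan is to characterize the weight-positive configurations $\Omega'^{\{1\}}$ via the explicit formula~\eqref{wt-2} for $w_\2(i,k)$, and then separately confirm that the combinatorial description given is exactly the set of configurations reachable from (and able to reach) a reference state under the modified dynamics, so that it is indeed the irreducible component carrying the stationary mass. First I would observe that, since $q_1 = 0$ and all other $q_\ell, p_\ell > 0$, a factor $w_\2(i,k)$ vanishes if and only if the string $q_1$ appears in the monomial~\eqref{wt-2}; inspecting the two cases, $q_1$ occurs in $w_\2(i,k)$ precisely when $k \geq 1$ and $i > 1$ in the first case (i.e. $2 \le i \le k$), and when $i > k$ always carries $q_1$ in the second case. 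So $w_\2(i,k) \neq 0$ forces $i = 1$ in the first case, meaning the only nonzero weights are $w_\2(1,k) = p_{k+1}\cdots p_n$ (for $1 \le k \le n$, with the convention the empty product is $1$) — wait, this needs care, because a $\2$ in block $C_k$ sits on some row $i$, and with $q_1=0$ the formula forces $i=1$ for every $\2$. That would be too restrictive; the resolution is that the relevant row label here is taken modulo $n$ and the ``row $i$'' of a $\2$ in~\eqref{wt-2} is to be read relative to which $\1$'s flank it. I would therefore restate the weight in terms of the excess/block data: a $\2_i$ lying in a block $C_k$ contributes a factor which is $q$-free in the index $1$ exactly when the indices $i, i+1, \dots, k$ (cyclically) do not wrap past $1$ in the ``wrong'' way — concretely, when $i \le k$ in the linear order inherited from starting at $\1_1$, equivalently when $\2_i$ occurs to the right of $\1_i$. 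Translating~\eqref{wt-2} carefully in the normalization $\tau_1 = \1_1$, the factor $w_\2(i,k)$ contains $q_1$ iff the cyclic interval from $i$ to $k$ (going up) passes through $1$, which (since we have fixed $\tau_1 = \1_1$ at the far left) happens iff $i > k$, i.e. iff $\2_i$ is to the left of $\1_i$. Hence $\wt(\tau) \neq 0$ iff no $\2_i$ lies to the left of $\1_i$, which is exactly the asserted description.

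The second half is to argue that $\Omega^{\{1\}}$ is genuinely a (closed, irreducible) class for the dynamics, so that ``$\wt \neq 0$'' is the right notion. For closedness I would check directly on the four transition rules~\eqref{transp1bis}--\eqref{transq1bis} that if $\tau$ has all $\2_i$'s weakly right of $\1_i$, then so does any $\tau'$ reachable from $\tau$ with positive rate. Rules~\eqref{transp1bis} and~\eqref{transq1bis} with $i \neq k$ move a $\2_i$ across a $\1_k$ with $k \neq i$, which cannot change the side of $\2_i$ relative to $\1_i$; rule~\eqref{transp2bis} (rate $p_k$) creates a $\2_{k-1}$ immediately after $\1_{k-1}$, hence to its right, and rule~\eqref{transq2bis} has rate $q_k$ which is $0$ when $k=1$ and otherwise creates $\2_{k+1}$ in a position we must check remains right of $\1_{k+1}$ — and indeed it is inserted just before $\1_{k+1}$. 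The only danger is rule~\eqref{transq2bis} with $k$ such that $k+1 = 1$, i.e. $k = n$, producing a $\2_1$; but that transition has rate $q_n > 0$, so it is not forbidden, and I must verify the $\2_1$ it produces lands to the right of $\1_1$ — which it does, since everything sits between $\1_n$ and $\1_1$... here one uses that in the cyclic word the segment after $\1_n$ and before the wrap is still ``right of $\1_1$'' only if $\1_1$ is the leftmost letter, which is the restricted normalization, so one should phrase this part on $\Omega^{\{1\}}$ rather than $\Omega'^{\{1\}}$ and then restrict. Finally, irreducibility within $\Omega^{\{1\}}$ follows by rerunning the proof of \cref{prop:irred}: the argument there only ever uses forward moves of type~\eqref{transp1bis}, \eqref{transp2bis} (both with positive rates here) to reach the basic configuration $\tau_0$, and $\tau_0$ visibly lies in $\Omega^{\{1\}}$; for the converse direction one reverses arrows and uses~\eqref{transp2bis_reversed}, \eqref{transq1bis_reversed}, whose rates $p_{k+1}, q_k$ with $k \ne 1$ in the relevant places remain nonzero, and one checks these reversed moves also preserve $\Omega^{\{1\}}$.

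The main obstacle I anticipate is purely bookkeeping but genuinely delicate: correctly reading off from~\eqref{wt-2} — which is written with a fixed linear labeling $1,\dots,n$ of rows and a fixed choice of blocks $C_1,\dots,C_n$ anchored at $\1_1$ — the statement ``$q_1 \mid w_\2(i,k)$ iff $\2_i$ is left of $\1_i$''. One has to be scrupulous about the cyclic wraparound: the indices $i, k$ in~\eqref{wt-2} are in $\{1,\dots,n\}$, the products $q_{i+1}\cdots q_k$ etc. are genuine linear products (not cyclic), and the assertion depends on the normalization $b_1 = 1 < b_2 < \cdots < b_n$. I would handle this by introducing the excess statistic from the proof of \cref{prop:irred} (the $k$ such that the nearest $\1$ to the left of $\2_i$ is $\1_{i+k}$) and showing excess $\neq$ a ``bad'' value $\iff$ the monomial is $q_1$-free, which makes the cyclic/linear translation explicit and mechanical. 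Everything else — closedness and irreducibility — is a routine case check on the four (and two reversed) transition rules.
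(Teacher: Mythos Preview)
Your core argument is correct and matches the paper's: $\Omega'^{\{1\}}$ is \emph{defined} as the set of restricted configurations with $\wt(\tau)\neq 0$, so the entire content of the proposition is the claim that $q_1 \mid w_\2(i,k)$ if and only if $i>k$, i.e.\ the $\2_i$ lies to the left of $\1_i$. You eventually state and justify exactly this (after the false start where you momentarily concluded $w_\2(i,k)\neq 0$ forces $i=1$, which is wrong --- in the first case of~\eqref{wt-2} the $q$-indices run over $\{i+1,\dots,k\}\subseteq\{2,\dots,n\}$, so $q_1$ never appears there regardless of $i$). The paper's proof is the same one-line observation, stated without the detour.

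Where you go astray is in scope. Your entire ``second half'' --- closedness of $\Omega^{\{1\}}$ under the dynamics and irreducibility of the restricted chain --- is not part of this proposition. You seem to be reading $\Omega^{\{1\}}$ as ``the irreducible component supporting the stationary measure'' and then trying to reconcile that with ``$\wt\neq 0$''; but in the paper $\Omega^I$ is \emph{defined} to be $\{\tau:\wt(\tau)\neq 0\}$, full stop. Stability under the dynamics is the content of the subsequent Lemma, and irreducibility plus the stationary-measure statement is the subsequent Theorem. So your second paragraph is proving later results, not this one, and your worry that you must show ``$\wt\neq 0$ is the right notion'' is misplaced. (Incidentally, your closedness check for rule~\eqref{transq2bis} with $k=n$ is wrong: the newly created $\2_{1}$ is placed immediately before $\1_{1}$, which in the restricted picture is at the far right, hence to the \emph{right} of every position --- but on the torus ``just before $\1_1$'' is not ``to the right of $\1_1$''. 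In fact that transition \emph{can} take you out of $\Omega'^{\{1\}}$ as a set of linear words; closedness holds only on $\Omega^{\{1\}}$, read cyclically, which is how the paper states the Lemma.)

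Strip the proposal down to: inspect~\eqref{wt-2}, note $q_1$ appears exactly in the second case $i>k$, translate $i>k$ (with $b_1=1<b_2<\cdots<b_n$) to ``$\2_i$ is in a block $C_k$ with $k<i$, i.e.\ to the left of $\1_i$''. That is the whole proof.
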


\begin{proof}
By the definition of the weight $\wt$, given a configuration $\omega\in\Omega'_{L,n} $, $q_1$ occurs in $\wt(\omega)$ if and only if there is a particle  $\2_i$ occurring to the left of $\1_i$ for a certain $i$. Therefore if $q_1=0$ and $q_i>0$ for $i>1$, $\wt(\omega)$ is nonzero if and only for all $i$, the particles $\2_i$ occur to the right of $\1_i$.
\end{proof}

Note that the running example in \cref{fig:multiline} does not belong to $\Omega'^{\{1\}}$. The above proposition together with \eqref{eq:Omega_I_intersection} implies a characterization of any $\Omega_I$, from which the following lemma can then be directly checked:

\begin{lem} 
$\Omega^I$ is stable under the dynamics of \eqref{transp1bis}--\eqref{transq2bis}, where we naturally exclude the transitions  with $k\in I$ in \eqref{transq1bis},\eqref{transq2bis}.
\end{lem}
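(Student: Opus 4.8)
The plan is to prove that $\Omega^I$ is closed under the (restricted) dynamics by combining the explicit description of $\Omega^{\{i\}}$ from \cref{prop:ta_description1} with the decomposition \eqref{eq:Omega_I_intersection}. Since $\Omega^I = \cap_{i \in I} \Omega^{\{i\}}$, it suffices to show that for each $i \in I$, the set $\Omega^{\{i\}}$ is stable under all the transitions \eqref{transp1bis}--\eqref{transq2bis} except those of type \eqref{transq1bis} and \eqref{transq2bis} with $k = i$ (which are excluded since $q_i = 0$). By translational symmetry (\cref{rem:dynamics}(1)) it is enough to treat $i = 1$, so I would fix the characterization: $\omega \in \Omega^{\{1\}}$ iff every particle $\2_j$ lies to the right of $\1_j$ (equivalently, within the cyclic block starting at $\1_j$ and ending just before $\1_{j+1}$, reading rightward, i.e.\ $\2_j$ never appears in the span from $\1_{j+1}$ cyclically up to $\1_j$).

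The core of the argument is then a finite case check: for each of the transition types \eqref{transp1bis}, \eqref{transp2bis}, and \eqref{transq1bis}, \eqref{transq2bis} with $k \neq 1$, verify that if the source configuration has all $\2_j$'s to the right of $\1_j$, so does the target. For \eqref{transp1bis}, $\cdots \1_k | \2_i \cdots \to \cdots \2_i | \1_k \cdots$ with $i \neq k$: the particle $\2_i$ moves left past $\1_k$; since $i \neq k$, this does not move $\2_i$ past $\1_i$, and $\1_k$ moves right, which can only help (it moves $\1_k$ away from, not toward, any $\2_k$ sitting to its right, and no $\2_k$ was between them). For \eqref{transp2bis}, $\cdots \1_{k-1} C \1_k | \2_k \cdots \to \cdots \1_{k-1} \2_{k-1} C | \1_k \cdots$: here a new $\2_{k-1}$ is created immediately after $\1_{k-1}$, hence to its right — consistent; the block $C$ (which by the structure of $\Omega_{L,n}$ consists of $\2$'s between $\1_{k-1}$ and $\1_k$, and in $\Omega^{\{1\}}$ these must all be $\2_{k-1}$) shifts right past the new $\2_{k-1}$, staying to the right of $\1_{k-1}$; and $\1_k$ moves right past the consumed $\2_k$. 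For \eqref{transq1bis} and \eqref{transq2bis} with $k \neq 1$: these are the reverses of \eqref{transp1bis},\eqref{transp2bis} read at a different vertex, and the same bookkeeping applies — in particular for \eqref{transq2bis}, $\cdots \2_k | \1_k C \1_{k+1} \cdots \to \cdots \1_k | C \2_{k+1} \1_{k+1}\cdots$, the $\2_k$ that was to the left of... wait, this needs care: in $\Omega^{\{1\}}$ with $k \neq 1$, can $\2_k$ appear to the left of $\1_k$? No — so actually transition \eqref{transq2bis} with source requiring $\2_k | \1_k$ is never enabled from a state in $\Omega^{\{1\}}$ unless $k = 1$, and $k=1$ is excluded; similarly one checks \eqref{transq1bis} only moves $\2_i$ leftward past $\1_k$ with $i \neq k$, preserving the property. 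I would record this observation explicitly, as it streamlines the check.

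The main obstacle — really the only subtle point — is handling the cyclic/wrap-around bookkeeping correctly in transitions \eqref{transp2bis} and \eqref{transq2bis}, where the relevant indices $k-1$, $k+1$ are taken modulo $n$ and the ``block'' $C$ can straddle the base point of the ring. I would address this by working with the ring $\bZ/L\bZ$ intrinsically (no artificial cut point), phrasing the invariant purely in terms of the cyclic arc from $\1_j$ to $\1_{j+1}$, and noting that the constraint ``$\2_j$ to the right of $\1_j$'' combined with the defining structure of $\Omega_{L,n}$ forces the block $C$ between $\1_{k-1}$ and $\1_k$ to consist entirely of $\2_{k-1}$'s — so the content of $C$ is pinned down and the verification becomes mechanical. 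Once the $i=1$ case is done, \eqref{eq:Omega_I_intersection} gives the general statement immediately: a transition allowed in the restricted dynamics for $\Omega^I$ (i.e.\ with $k \notin I$) is in particular allowed for each individual $\Omega^{\{i\}}$, $i \in I$, hence preserves each of them and therefore their intersection $\Omega^I$.
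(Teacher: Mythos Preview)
Your overall strategy—reduce to a single index via \eqref{eq:Omega_I_intersection} and then perform a transition-by-transition check using the explicit description of $\Omega^{\{1\}}$ from \cref{prop:ta_description1}—is exactly what the paper does (it simply asserts the lemma ``can then be directly checked'' from that characterization). Your observation that \eqref{transq2bis} with $k\neq 1$ is never enabled from a state of $\Omega^{\{1\}}$ is correct and a nice shortcut.

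However, your stated characterization of $\Omega^{\{1\}}$ is too strong. \cref{prop:ta_description1} says each $\2_j$ must lie to the right of $\1_j$; in terms of the blocks $C_k$ this means $\2_j$ may sit in any $C_k$ with $k \ge j$, \emph{not} only in $C_j$ as your parenthetical ``within the cyclic block starting at $\1_j$ and ending just before $\1_{j+1}$'' asserts. Consequently your claim that the block $C$ in \eqref{transp2bis} consists entirely of $\2_{k-1}$'s is false: it may contain any $\2_j$ with $j\le k-1$ (the ``only $\2_{k-1}$'' picture arises only in the fully totally asymmetric case $I=\{1,\dots,n\}$). Fortunately this does not break the argument: those particles remain between $\1_{k-1}$ and $\1_k$ after the transition, hence remain to the right of their respective $\1_j$'s, and your case analysis goes through once you use the correct invariant. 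Two minor points: the reduction from general $i$ to $i=1$ uses the vertical shift symmetry of \cref{rem:dynamics}(2), not part~(1); and in \eqref{transq1bis} the particle $\2_i$ moves rightward, not leftward, though your conclusion there is still correct.
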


We then arrive at the following theorem that extends ~\cref{thm:ssrefined} to the case $I\neq \emptyset$:

\begin{thm}
$\Omega^I$ forms an irreducible Markov chain whose steady state probabilities $\hat\pi(\omega)$ are proportitional to $\wt(\omega)$ for $\omega\in \Omega^I$.
\end{thm}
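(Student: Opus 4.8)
The plan is to reduce the claim to the results already established for the irreducible case in \cref{sec:statdist}. There are two things to prove: (i) that the dynamics restricted to $\Omega^I$ is irreducible, and (ii) that $\wt$ gives the stationary measure. For (ii), observe that the balance-equation computation in the proof of \cref{thm:ssrefined} is entirely local: for each nonempty interval $C_k$ it matches the outgoing weight $(p_k + q_{k+1})\wt(A)$ against incoming contributions, using only ratios $\hat\pi(A_i)/\hat\pi(A)$. The key point is that the preceding \textbf{Lemma} guarantees $\Omega^I$ is stable under the restricted dynamics, so every transition into or out of a state of $\Omega^I$ stays within $\Omega^I$; and since $\wt$ is supported exactly on $\Omega^I$, setting $\hat\pi(\omega)=0$ for $\omega\notin\Omega^I$ is consistent. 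When $q_{k}=0$ for $k\in I$, the terms in the balance equation that carried a factor $q_k$ simply drop out on both sides: the outgoing term $(p_k+q_{k+1})\wt(A)$ loses $q_{k+1}$ precisely when $k+1\in I$, and correspondingly the incoming configuration $A_1$ or $A_4$ that would have contributed with rate $q_{k+1}$ no longer exists as the relevant $\2_i$ cannot sit where it would need to (by \cref{prop:ta_description1}). So I would say: the verification of \eqref{eq:balance} in the proof of \cref{thm:ssrefined} goes through verbatim after deleting every term containing a vanishing rate, because each such deleted term is paired with a transition that is absent in $\Omega^I$.

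For the irreducibility statement (i), I would adapt the strategy of the proof of \cref{prop:irred}. The target configuration there, $\tau_0 = (\1_1,\dots,\1_n,\2_n,\dots,\2_n)$, already lies in $\Omega^I$ for $I=\{1\}$, since all its $\2_n$'s are to the right of $\1_n$ (and, cyclically, $\1_n$ precedes them). The forward half of that argument — reaching a basic configuration by repeatedly applying \eqref{transp2bis} and \eqref{transp1bis} to decrease the excess, then rotating to $\tau_0$ — uses only forward transitions, which are all still present (no $p_k$ vanishes), and it never leaves $\Omega^I$: I need to check that decreasing the excess of a configuration in $\Omega^I$ keeps it in $\Omega^I$, which follows from the explicit form of the moves and \cref{prop:ta_description1}. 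The backward half needs more care: the reversed transitions \eqref{transp2bis_reversed} and \eqref{transq1bis_reversed} used in \cref{prop:irred} involve rates $q_k$, some of which now vanish, so that particular route from $\tau_0$ to an arbitrary $\tau\in\Omega^I$ is blocked. Instead I would argue directly that from $\tau_0$ one can reach any $\tau\in\Omega^I$ using forward moves only: given the constraint that each $\2_i$ lies to the right of $\1_i$, one should be able to distribute the $\2$'s one at a time into their target positions by the same excess-style bookkeeping run in reverse order of the blocks, using only \eqref{transp1bis} and \eqref{transp2bis} (whose rates are all positive).

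Concretely, the steps in order: first, state and use the \textbf{Lemma} to confirm $\Omega^I$ is closed; second, reduce to $|I|=1$ and $I=\{1\}$ via \eqref{eq:Omega_I_intersection} and \cref{rem:dynamics}(1); third, prove irreducibility of $\Omega^{\{1\}}$ by the two-sided reachability-of-$\tau_0$ argument sketched above, taking care that all moves used have positive rate and preserve membership in $\Omega^I$; fourth, invoke uniqueness of the stationary distribution for an irreducible finite chain; fifth, verify the balance equation \eqref{eq:balance} for $\wt$ on $\Omega^I$ by citing the local computation in the proof of \cref{thm:ssrefined} and noting that every term killed by a vanishing rate is matched by a transition that is absent in the restricted chain.

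The main obstacle I expect is the backward direction of irreducibility: in \cref{prop:irred} it was handled cheaply by reversing arrows and observing the reversed graph looks formally like the forward one, but that trick relied on all rates being nonzero. Here I genuinely need a forward-only path from $\tau_0$ to an arbitrary $\tau\in\Omega^I$, and making that explicit — exhibiting, for a general target respecting the ``$\2_i$ right of $\1_i$'' constraint, a sequence of moves of types \eqref{transp1bis}, \eqref{transp2bis} that builds it from $\tau_0$ — is the part that requires real combinatorial bookkeeping rather than a one-line appeal to symmetry. Everything else is either already in the excerpt or a routine check that deleting zero-rate terms does not disturb the balance identity.
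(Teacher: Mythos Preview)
Your approach is essentially identical to the paper's (very terse) proof: reduce to $I=\{1\}$ via \eqref{eq:Omega_I_intersection} and translational invariance, then observe that the arguments of \cref{prop:irred} and \cref{thm:ssrefined} restrict to $\Omega^I$. You are in fact more careful than the paper about the backward half of irreducibility---the paper simply asserts that the proof of \cref{prop:irred} ``restricts to this case'' without addressing the issue you raise---and your flagged concern is legitimate, though note that only \eqref{transq1bis_reversed} (not \eqref{transp2bis_reversed}, which carries rate $p_{k+1}$) is actually affected by a vanishing $q_k$.
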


\begin{proof}
To prove irreducibility, it is enough to pick $I=\{1\}$ thanks to \eqref{eq:Omega_I_intersection} and translational invariance. Irreducibility follows by inspecting the proof in \cref{prop:irred} and checking that it restricts to this case.
The steady state probabilities then follow immediately by verifying the balance equations as in the proof of \cref{thm:ssrefined}, which here also can be restricted. 
\end{proof}

 Now let us consider the correspondence with marked partitions in \cref{sub:partitions}. By restricting the correspondence to $\Omega'^{\{1\}}$ using \cref{prop:ta_description1}, one arrives at the subset of ordered partitions in which the marked element in each block is its smallest element, so we can erase the mark without losing any information. We obtain ordered partitions where the blocks are ordered according to the relative order of their minimum element. Given a standard set partition, there is obviously a unique way to order its blocks in this way and we finally obtain the following result:

\begin{prop}
\label{prop:ta_description2}
$\Omega_{L,n}'^{\{1\}}$ is in bijection with set partitions via the map from \cref{prop:bij_marked_partitions}.
\end{prop}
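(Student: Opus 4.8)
The plan is to unwind the definitions in \cref{sub:partitions} under the extra constraint coming from \cref{prop:ta_description1}. Recall that \cref{prop:bij_marked_partitions} already gives a bijection $A \mapsto \left((B_i,b_i)\right)_{i=1,\dots,L}$ between $\mathcal{A}_{L,n}$ and $\mathcal{P}_{L,n}$, and that this restricts to a bijection between $\mathcal{A}'_{L,n}$ and $\mathcal{P}'_{L,n}$ (the ordered set partitions with marked elements whose first marked element is $1$). Via the isomorphism of \cref{prop:isom}, the data $(B_i,b_i)$ translates into: $b_i$ is the position of $\1_i$ in the word, and $B_i\setminus\{b_i\}$ is the set of positions of the particles $\2_i$. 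So the task is purely to identify the image of $\Omega_{L,n}'^{\{1\}}$ under this correspondence.

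First I would observe that, by \cref{prop:ta_description1}, a configuration $\omega\in\Omega'_{L,n}$ lies in $\Omega_{L,n}'^{\{1\}}$ if and only if every particle $\2_i$ occurs strictly to the right of $\1_i$, i.e.\ every element of $B_i\setminus\{b_i\}$ is larger than $b_i$. Equivalently, $b_i = \min B_i$ for every $i$; that is, the marked element of each block is its minimum. In particular the mark carries no information and can be erased, leaving an ordered set partition $(B_1,\dots,B_n)$ of $\{1,\dots,L\}$ into $n$ nonempty blocks. (Note that here the ground set is $\{1,\dots,L\}$ and there are $n$ blocks, matching the phrasing ``set partitions with $n$ blocks on $L$ elements'' in the introduction; I should be careful to state the bijection with that convention rather than the $\{1,\dots,n\}$/$L$-blocks convention of \cref{config-defi}, since in $\Omega_{L,n}$ it is the positions along the ring that get partitioned.) Second, I would identify the constraint that the ordering of the blocks must satisfy: the third condition of \cref{config-defi}, once we normalize to $b_1 < \dots < b_n$ in the restricted case, says precisely that $\min B_1 < \min B_2 < \dots < \min B_n$. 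Hence the $B_i$ are listed in increasing order of their minima — but for a given (unordered) set partition there is exactly one such ordering, since the minima of distinct blocks are distinct. Therefore the data of the ordered tuple $(B_1,\dots,B_n)$ with $\min B_1<\dots<\min B_n$ is equivalent to the data of the underlying unordered set partition $\{B_1,\dots,B_n\}$.

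Putting these two observations together: the composite map $\Omega_{L,n}'^{\{1\}} \to \mathcal{P}'_{L,n} \to \{\text{set partitions of } \{1,\dots,L\} \text{ into } n \text{ blocks}\}$, sending $\omega$ to the set partition recording the positions of the particles of each index (position of $\1_i$ together with positions of all $\2_i$), is a bijection. Injectivity and surjectivity are immediate from the two equivalences above: erasing the mark loses nothing because the mark is forced to be the minimum, and forgetting the order loses nothing because the order is forced to be increasing-by-minima. The inverse map takes a set partition, orders its blocks by increasing minimum to get $(B_1,\dots,B_n)$, places $\1_i$ at position $\min B_i$ and $\2_i$ at every other position of $B_i$.

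There is no real obstacle here — the statement is essentially a bookkeeping consequence of \cref{prop:bij_marked_partitions} and \cref{prop:ta_description1}, and indeed the paragraph immediately preceding the proposition already sketches the argument. The only point requiring a little care is making the ground-set/block-count conventions explicit and consistent (in $\mathcal{A}_{L,n}$ the $n$ columns are partitioned among $L$ rows, but after passing to $\Omega_{L,n}$ and the totally asymmetric restriction it is cleaner and matches the introduction to regard the $L$ ring-positions as being partitioned into $n$ blocks), and checking that ``marked element $=$ minimum'' together with ``blocks ordered by minima'' are genuinely equivalent to membership in $\Omega_{L,n}'^{\{1\}}$ in both directions, which is exactly what \cref{prop:ta_description1} supplies.
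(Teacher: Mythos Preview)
Your proposal is correct and follows exactly the approach the paper takes: use \cref{prop:ta_description1} to see that the mark is forced to be the block minimum (hence can be erased), then use the restricted condition $b_1<\cdots<b_n$ to see that the block ordering is forced to be increasing-by-minima (hence can also be forgotten), yielding a bijection with unordered set partitions. Your extra care about the ground-set/block-count conventions is appropriate given the paper's somewhat inconsistent indexing, but the argument itself is identical to the paragraph preceding the proposition.
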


This explains the occurrence of set partitions in \cite[Theorem 3.7]{ayyer-2020}, which corresponds to the case $p_1>0$, $q_1=0$ and $p_i=q_i=1$ for $i>1$.

\begin{rem}
The generating function of the partition function in the case $q_1>0$ in \cite[Theorem 2.7]{ayyer-2020} can be explained by enumerating marked set partitions. Indeed with these special rates, the weight $\wt(\cdot)$ becomes simple to express and standard methods of enumerative combinatorics give the desired answer. We do not know how to obtain such an exponential generating function in the general case considered in this manuscript.
\end{rem}

Finally, let us describe the totally asymmetric case $I=\{1,\ldots,n\}$.
 Then $\Omega'^{I}$ consists of configurations $\omega$ such that only $\2_k$ can occur between $\1_k$ and $\1_{k+1}$. Note that such a configuration is uniquely determined by the positions of the $\1_j$'s, and its weight is the product over all remaining positions of
$p_1 \cdots p_{k-1} p_{k+1} \cdots p_n=p_1 \cdots p_n/p_k$ if the position is between $\1_k$ and $\1_{k+1}$.
Let the \emph{homogeneous symmetric polynomial} of degree $k$ be defined by
\[
h_k(x_1,\dots,x_j) = \sum_{1 \leq i_1 \leq i_2 \leq \cdots \leq i_k \leq j} 
x_{i_1} x_{i_2} \dots x_{i_k}.
\] 
From the above observation, we obtain the following.

\begin{prop}
\label{prop:Z_totally_asymmetric}
 If $q_i=0$ for $i=1,\ldots,n$,
\[
Z_{L,n}= (p_1\dots p_n)^{L-n}\;
h_{L-n} \left(\frac{1}{p_1},\dots,\frac{1}{p_n} \right).
\]
\end{prop}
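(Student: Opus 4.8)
The plan is to unpack the description given just before the statement and turn it into a direct enumeration. By Proposition~\ref{prop:Z_totally_asymmetric}'s preamble, when all $q_i=0$ the nonzero-weight restricted configurations $\omega\in\Omega'^{I}_{L,n}$ with $I=\{1,\ldots,n\}$ are exactly those in which the block $C$ between $\1_k$ and $\1_{k+1}$ contains only copies of $\2_k$; such an $\omega$ is determined by the positions of the $n$ particles $\1_1,\ldots,\1_n$ among the $L$ sites (with $\1_1$ at position $1$). Equivalently, $\omega$ is encoded by the tuple $(c_1,\ldots,c_n)$ of nonnegative integers with $c_1+\cdots+c_n=L-n$, where $c_k$ is the number of $\2_k$'s sitting between $\1_k$ and $\1_{k+1}$.

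First I would compute $\wt(\omega)$ in terms of this tuple. Each $\2_k$ lying in $C_k$ has, by \eqref{wt-2} with $i=k$, weight $w_\2(k,k)=p_1\cdots p_{k-1}p_{k+1}\cdots p_n = (p_1\cdots p_n)/p_k$; note this is the only value of $i$ that occurs since all the $\2$'s in $C_k$ are on row $k$. Hence
\begin{equation*}
\wt(\omega)=\prod_{k=1}^n\left(\frac{p_1\cdots p_n}{p_k}\right)^{c_k}
=(p_1\cdots p_n)^{c_1+\cdots+c_n}\prod_{k=1}^n p_k^{-c_k}
=(p_1\cdots p_n)^{L-n}\prod_{k=1}^n\left(\frac{1}{p_k}\right)^{c_k}.
\end{equation*}
Consistency with the specialization $W_\2(k)=\sum_j w_\2(j,k)\to w_\2(k,k)=(p_1\cdots p_n)/p_k$ in \cref{thm:pf} is a useful cross-check but not logically needed here.

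Next I would sum over all configurations. Since distinct tuples $(c_1,\ldots,c_n)$ give distinct restricted configurations in $\Omega'^{I}$ and conversely,
\begin{equation*}
Z_{L,n}=\sum_{\substack{c_1,\ldots,c_n\geq 0\\ c_1+\cdots+c_n=L-n}}\wt(\omega)
=(p_1\cdots p_n)^{L-n}\sum_{\substack{c_1,\ldots,c_n\geq 0\\ c_1+\cdots+c_n=L-n}}\prod_{k=1}^n\left(\frac{1}{p_k}\right)^{c_k}.
\end{equation*}
The remaining sum is, by the very definition of the complete homogeneous symmetric polynomial, exactly $h_{L-n}(1/p_1,\ldots,1/p_n)$, which gives the claimed identity. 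I do not expect a genuine obstacle: the only points requiring care are (i) confirming, via \cref{prop:ta_description2} or directly, that the weight-nonzero restricted configurations are precisely those parametrized by the compositions $(c_1,\ldots,c_n)$, and (ii) correctly reading off $w_\2(k,k)$ from \eqref{wt-2} (the $i=k$ case of the first branch, where the ranges $q_{i+1}\cdots q_k$ and $p_1\cdots p_{i-1}$ telescope appropriately). Everything else is the bookkeeping above.
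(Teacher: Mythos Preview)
Your proposal is correct and follows essentially the same route as the paper: the paper records exactly the observations you list (only $\2_k$'s occur between $\1_k$ and $\1_{k+1}$, each contributing weight $p_1\cdots p_n/p_k$), then states the proposition as an immediate consequence. Your write-up simply spells out the final summation over compositions $(c_1,\ldots,c_n)$ and the identification with $h_{L-n}$ a bit more explicitly than the paper does.
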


\begin{rem}
For the expert, we note that \cref{prop:Z_totally_asymmetric} can also be rewritten in terms of Schur polynomials as
\[
Z_{L,n} = s_{\langle (L-n)^{n-1} \rangle}(p_1,\dots,p_n).
\]
The interested reader can figure out the weight-preserving bijection between $\Omega'^{I}$ and rectangular semistandard Young tableaux that interprets this equality.
\end{rem}

\section{Densities and currents on the torus}
\label{sec:denscurr}

Let $\tau$ (resp $\eta$) denote the occupation variable for $\1$ (resp. $\2$). That is to say, $\tau_{i,j} = 1$ (resp. $\eta_{i,j} = 1$) in a configuration if and only if the site $(i,j)$ is occupied by a $\1$ (resp. $\2$), and otherwise $\tau_{i,j} = 0$ (resp. $\eta_{i,j} = 0$).
We denote expectations in the stationary distribution $\hat{\pi}$ on $\mathcal{A}_{L,n}$ by $\aver{\cdot}_{L,n}$. When $L$ and $n$ are clear from context, we will suppress the subscripts.
Throughout this section, we will assume that all $p_i, q_i > 0$.

\subsection{Densities}

By horizontal translation invariance of the exclusion process on $\mathcal{A}_{L,n}$ in \cref{rem:dynamics}(1), the following is easy to prove.

\begin{prop}
\label{prop:dens1}
The density of $\1$'s is given by
\[
\aver{ \tau_{i,j} } = \frac{1}{L},
\]
for $1 \leq i \leq n, 1 \leq j \leq L$.
The density of $\2$'s satisfy
\[
\aver{ \eta_{i,j} } = \aver{ \eta_{i,j'} },
\]
for $1 \leq i \leq n, 1 \leq j < j' \leq L$.
\end{prop}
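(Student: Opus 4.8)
The plan is to derive both statements purely from the horizontal translation invariance of the stationary distribution $\hat\pi$ established right after \cref{prop:irred} (a consequence of \cref{rem:dynamics}(1)), together with the defining constraints on configurations in \cref{config-defi}.

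For the density of $\1$'s, first I would note that by translation invariance the quantity $\aver{\tau_{i,j}}$ does not depend on $j$; write it as $\rho_i$. Then I would sum over $j$: since each row contains exactly one $\1$ (the first bullet of \cref{config-defi}), we have $\sum_{j=1}^L \tau_{i,j} = 1$ deterministically in every configuration, hence $\sum_{j=1}^L \aver{\tau_{i,j}} = 1$. Combining this with $\aver{\tau_{i,j}} = \rho_i$ for all $j$ gives $L\rho_i = 1$, i.e. $\aver{\tau_{i,j}} = 1/L$ for every $i$ and $j$, as claimed. (One should be mildly careful that in this section $L$ plays the role of the horizontal size, matching the convention $\mathbb{Z}/L\mathbb{Z}$ in the horizontal direction; the indices in the statement are $1\le i\le n$, $1\le j\le L$.)

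For the density of $\2$'s, the statement $\aver{\eta_{i,j}} = \aver{\eta_{i,j'}}$ for all $j,j'$ is simply the assertion that $\aver{\eta_{i,j}}$ is independent of $j$, which again follows directly from horizontal translation invariance of $\hat\pi$: the horizontal shift map on $\mathcal{A}_{L,n}$ sending a configuration $A$ to its shift carries $\hat\pi$ to itself, and under this map the occupation variable $\eta_{i,j}$ at site $(i,j)$ is carried to $\eta_{i,j+1}$, so their expectations agree; iterating gives equality for all pairs $j,j'$.

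There is essentially no obstacle here: the only thing to be spelled out carefully is that translation invariance of the \emph{measure} $\hat\pi$ — not merely of the dynamics — is what is being used, and this was already recorded in the text as a consequence of uniqueness of the stationary distribution (\cref{prop:irred}) together with \cref{rem:dynamics}(1). So the proof is a two-line deduction: invoke translation invariance for the $j$-independence in both parts, and use the ``one $\1$ per row'' constraint to pin down the constant $1/L$ for the $\1$-density.
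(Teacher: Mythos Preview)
Your proposal is correct and follows essentially the same approach as the paper, which simply states that the proposition is easy to prove from the horizontal translation invariance of \cref{rem:dynamics}(1). You have correctly spelled out the standard details: translation invariance of $\hat\pi$ gives $j$-independence of both $\aver{\tau_{i,j}}$ and $\aver{\eta_{i,j}}$, and the deterministic row constraint $\sum_j \tau_{i,j}=1$ then fixes the constant $1/L$.
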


The exclusion process is not vertically translation-invariant, but it is if we replace $p_i, q_i$ by $p_{i+1}, q_{i+1,1}$ and down-shift configurations cyclically (see \cref{rem:dynamics}). Using this property, we can show:

\begin{prop}
\label{prop:dens2}
The density of $\2$'s satisfy
\[
\aver{ \eta_{i,1} } \Big\rvert_{
\substack{p_j \to p_{j+1} \\ q_j \to q_{j+1}} \forall j} = \aver{ \eta_{i+1,1} },
\]
for $1 \leq i \leq n$.
\end{prop}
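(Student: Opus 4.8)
The plan is to exploit the vertical quasi-symmetry recorded in \cref{rem:dynamics}(2): shifting the rates $p_j \mapsto p_{j+1}$, $q_j \mapsto q_{j+1}$ and cyclically down-shifting every configuration is an isomorphism of Markov chains on $\mathcal{A}_{L,n}$. Since by \cref{prop:irred} the stationary distribution is unique, this isomorphism must carry $\hat\pi$ (computed with rates $p_j,q_j$) to $\hat\pi$ (computed with rates $p_{j+1},q_{j+1}$). Concretely, if $\sigma$ denotes the map on configurations that sends $A=(A_{i,j})$ to $A'=(A_{i-1,j})$ (row indices mod $n$), then $\hat\pi_{p,q}(A) = \hat\pi_{p',q'}(\sigma(A))$ for every $A$, where $p'_j = p_{j+1}$, $q'_j = q_{j+1}$.

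From here the statement is a one-line translation. First I would observe that $\eta_{i+1,1}(A) = \eta_{i,1}(\sigma(A))$, directly from the definition of $\sigma$ as a cyclic row-shift. Then
\begin{align*}
\aver{\eta_{i+1,1}}_{p,q}
&= \sum_{A \in \mathcal{A}_{L,n}} \hat\pi_{p,q}(A)\, \eta_{i+1,1}(A)
= \sum_{A} \hat\pi_{p',q'}(\sigma(A))\, \eta_{i,1}(\sigma(A)) \\
&= \sum_{B} \hat\pi_{p',q'}(B)\, \eta_{i,1}(B)
= \aver{\eta_{i,1}}_{p',q'}
= \aver{\eta_{i,1}}_{p,q}\Big\rvert_{\substack{p_j\to p_{j+1}\\ q_j \to q_{j+1}}\forall j},
\end{align*}
where in the third equality we reindexed the sum by $B=\sigma(A)$, using that $\sigma$ is a bijection of $\mathcal{A}_{L,n}$ onto itself. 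This is exactly the claimed identity.

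The only point requiring a modicum of care is confirming that $\sigma$ really is an isomorphism of the dynamics with the shifted rates, i.e.\ making \cref{rem:dynamics}(2) precise: one checks that each of the four transition types \cref{it:transp1,it:transp2,it:transq1,it:transq2} out of $A$ with rate $p_k$ or $q_k$ corresponds under $\sigma$ to a transition of the same type out of $\sigma(A)$ with rate $p'_{k-1}=p_k$ or $q'_{k-1}=q_k$. This is immediate since the transition rules reference rows only through their labels, and relabelling $k \mapsto k-1$ everywhere is precisely what the down-shift does. I do not expect any genuine obstacle here; the main content is simply invoking uniqueness of the stationary measure, and the rest is bookkeeping. (Alternatively, and even more simply, one can read the identity straight off the explicit formula \eqref{wt-omega}: the weight $\wt(A)$ is visibly obtained from $\wt(\sigma(A))$ by the index shift $p_j\leftrightarrow p_{j+1}$, $q_j\leftrightarrow q_{j+1}$, since $w_\2(i,k)$ depends on the rates only through the cyclic pattern determined by the relative positions of $i$ and $k$, and these shift in lockstep with the row relabelling.)
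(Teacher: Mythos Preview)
Your approach is exactly the paper's: the paper does not give a detailed argument at all but simply invokes \cref{rem:dynamics}(2), and you have spelled out precisely that symmetry argument via the bijection~$\sigma$ and uniqueness of the stationary measure. One small bookkeeping slip to fix: with your definition $\sigma(A)_{i,j}=A_{i-1,j}$ (a down-shift), one gets $\eta_{i,1}(\sigma(A))=\eta_{i-1,1}(A)$ and the matching rate shift is $p'_j=p_{j-1}$, not $p'_j=p_{j+1}$; to make your formulas consistent with $p'_j=p_{j+1}$ and $\eta_{i+1,1}(A)=\eta_{i,1}(\sigma(A))$, take $\sigma$ to be the up-shift $\sigma(A)_{i,j}=A_{i+1,j}$ instead. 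Once that is corrected, the computation goes through verbatim.
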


By \cref{prop:dens1} and \cref{prop:dens2}, it suffices to determine $\aver{ \eta_{1,1} } $ in order to compute the densities $\aver{\eta_{i,j}}$ for all $i,j$.
Recall the formula for the weight of a $\2$ in \eqref{wt-2} and $W_\2$ in \eqref{W-def}.

\begin{thm}
\label{thm:dens2}
The density of $\2$'s in position $(1,k)$ is given by
\[
\aver{ \eta_{1,k}} = 
\sum_{i=1}^n w_\2(1,i) \sum_{j=1}^{L-n} W_\2(i)^{j-1} 
\frac{Z_{L-j,n}}{L Z_{L,n}}.
\]
\end{thm}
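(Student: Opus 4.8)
## Proof Proposal for Theorem (density of $\2$'s at position $(1,k)$)

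The plan is to compute $\aver{\eta_{1,k}}$ directly from the weighted enumeration provided by \cref{thm:ssrefined}, by partitioning the configurations in $\mathcal{A}_{L,n}$ with a $\2$ at position $(1,k)$ according to the combinatorial structure ``visible'' near that $\2$. First I would use horizontal translation invariance (\cref{prop:dens1}) to reduce to a sum over restricted configurations, writing $\aver{\eta_{1,k}} = \frac{1}{L Z_{L,n}} \sum \wt(A)$ where the sum is over $A$ with $A_{1,1}=\1$ and $A_{1,k}=\2$, or more symmetrically, fix the position $k$ inside some block $C_i$ and sum. The key observation is that if the $\2$ at position $(1,k)$ lies in the interval $C_i$ (i.e.\ between the $\1$'s of rows $i$ and $i+1$), then by \eqref{wt-2} it contributes the factor $w_\2(1,i)$ to $\wt(A)$, and this factor \emph{only} depends on $i$, not on the detailed location of $k$ within $C_i$ nor on the rest of the configuration.

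The main step is then a ``peeling'' or transfer-matrix-style decomposition. Having fixed that the $\2$ at $(1,k)$ sits in block $C_i$, I would further stratify by $j$, the position of this $\2$ counted from the right end of the block $C_i$ (equivalently, the number of columns strictly between our $\2$ and the $\1$ of row $i+1$, inclusive of... — one must pin down the convention carefully). The $j-1$ columns of $C_i$ lying to the right of our $\2$ each hold a $\2$ whose row-index ranges freely over $\{1,\dots,n\}$, and summing each such $\2$'s weight $w_\2(\cdot,i)$ over its row gives exactly $W_\2(i)$; hence these columns collectively contribute $W_\2(i)^{j-1}$. Everything else — the columns of $C_i$ to the left of our $\2$, together with all other blocks $C_1,\dots,\widehat{C_i},\dots,C_n$ and the cyclic placement of the $\1$'s — forms, after deleting the $j$ rightmost columns of $C_i$ together with our $\2$, a genuine configuration on an $(L-j)\times n$ torus; the sum of $\wt$ over all such is $Z_{L-j,n}$ (here one uses that $Z$ counts restricted configurations and the translation-invariance bookkeeping contributes the $1/L$ already extracted). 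Multiplying the three contributions $w_\2(1,i)$, $W_\2(i)^{j-1}$, $Z_{L-j,n}$ and summing over $i \in \{1,\dots,n\}$ and $j \in \{1,\dots,L-n\}$, then dividing by $L Z_{L,n}$, yields the claimed formula. The range of $j$ is $1$ to $L-n$ because $|C_i| \le L-n$ and $Z_{L-j,n} = 0$ once $L-j < n$.

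The step I expect to be the main obstacle is making the bijection ``delete the $j$ rightmost columns of block $C_i$ (including our distinguished $\2$)'' precise as a weight-preserving correspondence onto $\mathcal{A}_{L-j,n}$ (or its restricted version), and verifying that the weight factorizes exactly as $w_\2(1,i)\cdot W_\2(i)^{j-1}\cdot Z_{L-j,n}$ with no cross terms. One must check: (a) deleting columns does not disturb the weight $w_\2(i',k')$ of any surviving $\2$, which holds because $w_\2(i',k')$ depends only on the row $i'$ and on \emph{which} block $C_{k'}$ it sits in, and block membership is preserved under deleting whole columns between consecutive $\1$'s; (b) the cyclic-increasing condition on the $\1$-columns is preserved; and (c) the count is exact, i.e.\ every restricted configuration on the smaller torus arises from a unique configuration upstairs for each choice of the $j-1$ free row-labels. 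Once this bijective bookkeeping is nailed down, the rest is the geometric-series summation already implicit in \cref{thm:pf}, and indeed one could alternatively phrase the whole argument via the generating function $\prod_k (1 - W_\2(k)x)^{-1}$, extracting $\aver{\eta_{1,k}}$ as a coefficient, which may be the cleanest write-up.
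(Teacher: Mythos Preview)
Your proposal is correct and follows essentially the same approach as the paper's proof: both stratify the configurations carrying a $\2$ at $(1,k)$ by the block index $i$ (yielding the factor $w_\2(1,i)$), then by a distance parameter inside $C_i$ (yielding $W_\2(i)^{j-1}$ from the freely-labeled intermediate columns), and identify the remaining weighted sum with $Z_{L-j,n}$ via deletion of a contiguous run of $\2$-columns. The only cosmetic difference is that the paper fixes the $\2$ at column $L$ and measures $j$ as the distance to the nearest $\1$ on the \emph{left} (the $\1$ in row $i$), whereas you measure from the \emph{right} end of $C_i$ (toward $\1_{i+1}$); these are mirror images of the same argument, and your explicit checklist (a)--(c) for the bijection is in fact slightly more careful than the paper's own write-up.
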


\begin{proof}
By translation invariance, we can choose any $k$ between $1$ and $L$. So, we let $k=L$. Suppose the nearest $\1$ to its left is in position 
$(i,j)$. Then, by definition, $(1,L) \in C_i$ and we get a contribution of $w_\2(1,i)$ from this $\2$. Since there are $L - n$ columns containing $\2$'s, $j \geq n$ and there are $L-1-j$ columns between the $\1$ in position $(i,j)$ and the $\2$ in position $(1,L)$. By construction, there are no $\1$'s in any of these columns. Therefore, we get a contribution of $W_\2(i)$ from each of these columns. By 
ignoring all the columns after $j$, we obtain a restricted configuration in $\mathcal{A}_{j,n}$, where the normalization is that position $(i,j)$ contains a $\1$. Since the first $j-1$ columns of the configuration can be chosen independent of the columns $j+1$ to $L-1$, we obtain the desired result.
\end{proof}

\begin{eg}
For $L=4$ and $n=2$, we compute the weights of states for which there is a $\2$ at $(1,4)$ from \cref{eg:ss-42} to obtain
\[
2p_2^2 + 2p_1 q_2 + 2p_2 q_1 + 2p_2 q_2 + 2q_2^2 + p_1 p_2 + q_1 q_2,
\]
where we have also considered the rotation of restricted configurations therein.
From the formula in \cref{thm:dens2}, the numerator is
\[
(p_2 + q_2) Z_{3,2} + \left(p_2 (p_2 + q_1) + q_2 (p_1 + q_2) \right) Z_{2,2}.
\]
Plugging in $Z_{3,2} = p_1 + p_2 + q_1 + q_2$ and $Z_{2,2} = 1$, one easily sees that these two expressions are the same.
\end{eg}

\subsection{Currents}

We will compute five kinds of currents. Two of these are analogues of computations in one dimension, namely the currents of $\1$'s and $\2$'s along the horizontal edge $(i,j)$ -- $(i,j+1)$, denoted $J_\1(i,j)$ and $J_\2(i,j)$ respectively. 
The total horizontal current for the $\1$ in row $i$ is then the sum over all $j$ of $J_\1(i,j)$ and will be denoted $J_\1(i)$.
The next two currents are interesting for particles of type $\2$'s since they perform nonlocal motion under transitions \cref{it:transp2} and \cref{it:transq2}.
For any column $j$, we will calculate the motion of $\2$'s passing 
from the left of (and including) column $j$ to the right of it and vice versa. This will be the cumulative horizontal current across edges $(i,j)$ -- $(i,j+1)$ summed over all $i$, denoted $J_\2^{\text{h}}(j)$.
Similarly, for any row $k$, the motion of $\2$'s passing below (and including row $i+1$) to above it and vice versa gives the cumulative vertical current across all vertical edges from row $i$ to row $i+1$, denoted $J_\2^{\text{v}}(i)$.
See \cref{fig:current-types} for an illustration of these currents.

\begin{center}
\begin{figure}[!ht]
\includegraphics[width=0.8\textwidth]{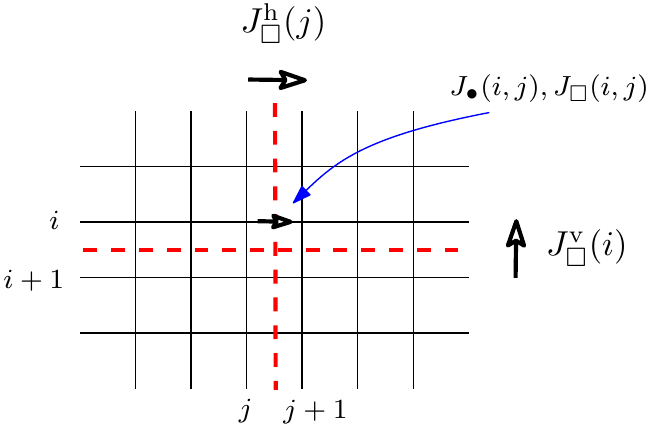}
\caption{An illustration of the types of currents being considered here.}
\label{fig:current-types}
\end{figure}
\end{center}

The following property of the weights will come in useful in the computation of currents.

\begin{lem}
\label{lem:2wts}
The weights associated to $\2$'s satisfy
\[
p_k w_\2(i,k) - q_k w_\2(i,k-1) = 
\begin{cases}
p_1 \cdots p_n - q_1 \cdots q_n & \text{if }i = k, \\
0 & \text{if }i \neq k.
\end{cases}
\]
\end{lem}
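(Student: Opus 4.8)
The plan is to verify the identity directly from the closed-form expression \eqref{wt-2}, by comparing the two products $p_k w_\2(i,k)$ and $q_k w_\2(i,k-1)$ monomial by monomial. The key observation is that $w_\2(i,k)$ and $w_\2(i,k-1)$ differ only in how they treat the single index $k$: passing from $w_\2(i,k)$ to $w_\2(i,k-1)$ replaces one factor of $q_k$ (when $i<k$) or one factor of $p_k$ (when $i \geq k$) appropriately, while everything else stays put. Multiplying $w_\2(i,k)$ by $p_k$ and $w_\2(i,k-1)$ by $q_k$ should make the two resulting monomials agree except for the role of the index $k$, so that their difference telescopes.

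Concretely, I would split into the cases $i<k$, $i=k$, and $i>k$. For $i<k$: from the first line of \eqref{wt-2}, $w_\2(i,k) = p_1\cdots p_{i-1}\, q_{i+1}\cdots q_k\, p_{k+1}\cdots p_n$, so $p_k w_\2(i,k) = p_1\cdots p_{i-1}\, q_{i+1}\cdots q_{k-1}\, (p_k q_k)\, p_{k+1}\cdots p_n$, while $w_\2(i,k-1) = p_1\cdots p_{i-1}\, q_{i+1}\cdots q_{k-1}\, p_k\, p_{k+1}\cdots p_n$ (here the product $q_{i+1}\cdots q_{k-1}$ stops at $k-1$ and the $p$'s start at $k$), hence $q_k w_\2(i,k-1)$ equals the same thing. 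So the difference is $0$. The case $i>k$ is symmetric, using the second line of \eqref{wt-2}: $w_\2(i,k)$ contains $q_1\cdots q_k$ and $w_\2(i,k-1)$ contains $q_1\cdots q_{k-1}\, p_k$, so multiplying the first by $p_k$ and the second by $q_k$ again gives equal monomials. One should double-check the boundary index ranges (e.g. $i=k-1$ or $i=k+1$, and $k=1$ where $w_\2(i,0)$ must be read cyclically as $w_\2(i,n)$) but these do not cause genuine trouble.

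The only case with a nonzero answer is $i=k$, where $w_\2(k,k)$ comes from the first line of \eqref{wt-2} (as $p_1\cdots p_{k-1} p_{k+1}\cdots p_n = p_1\cdots p_n / p_k$, the "$q$" part being empty) while $w_\2(k,k-1)$ comes from the second line (as $q_1\cdots q_{k-1}\, q_{k+1}\cdots q_n = q_1\cdots q_n / q_k$, the "$p$" part being empty). Thus $p_k w_\2(k,k) = p_1\cdots p_n$ and $q_k w_\2(k,k-1) = q_1\cdots q_n$, and the difference is $p_1\cdots p_n - q_1\cdots q_n$, as claimed.

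Since each case is a routine bookkeeping check of index ranges in a product of $p$'s and $q$'s, there is no real obstacle here; the only thing to be careful about is the cyclic convention for the index $k$ (so that the statement makes sense for $k=1$, where $k-1$ means $n$) and the degenerate subcases where one of the two sub-products in \eqref{wt-2} is empty — exactly the subcases that produce the $i=k$ contribution. I would therefore present the proof as a short case analysis, displaying the two monomials side by side in each of the three cases.
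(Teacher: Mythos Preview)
Your proposal is correct and follows essentially the same route as the paper's own proof: a direct three-case check ($i<k$, $i>k$, $i=k$) using the explicit monomial form in \eqref{wt-2}, showing the two products coincide in the first two cases and yield $p_1\cdots p_n - q_1\cdots q_n$ in the third. The paper presents it slightly more tersely (without the auxiliary remarks on cyclic indexing or degenerate subcases), but the substance is identical.
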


\begin{proof}
From \eqref{wt-2}, if $i \leq k-1$, the left hand side is
\[
p_k \big( p_1 \cdots p_{i-1} q_{i+1} \cdots q_k p_{k+1} \cdots p_n \big)
- q_k \big( p_1 \cdots p_{i-1} q_{i+1} \cdots q_{k-1} p_{k} \cdots p_n \big),
\]
which is $0$. Similarly, if $i > k$, the left hand side is
\[
p_k \big( q_1 \cdots q_{k} p_{k+1} \dots p_{i-1} q_{i+1} \dots q_n \big)
- q_k \big( q_1 \cdots q_{k-1} p_{k} \dots p_{i-1} q_{i+1} \dots q_n \big),
\]
which is again $0$. If $i=k$, we get
\[
p_k \big( p_1 \cdots p_{k-1} p_{k+1} \cdots p_n \big)
- q_k \big( q_1 \cdots q_{k-1} q_{k+1} \cdots q_n \big),
\]
as desired.
\end{proof}

We now consider the currents of $\1$'s. Since these only travel horizontally, we can only talk about horizontal currents for these. 
We denote by $J_\1(i,j)$ the current for the particle of type $\1$ between sites $(i,j)$ and $(i,j+1)$. 
More precisely, this is the number of $\1$'s going from $(i,j)$ to $(i,j+1)$ minus the number of $\1$'s going from $(i,j+1)$ to $(i,j)$ per unit time, in the large time limit. By particle conservation, this is independent of $j$. In terms of the stationary distribution, 
it is given by
\[
J_\1(i,j) = p_i \left\langle \tau_{i,j} \sum_{k=1}^n \eta_{k,j+1} \right\rangle
- q_i \left\langle \tau_{i,j+1} \sum_{k=1}^n \eta_{k,j} \right\rangle.
\]

\begin{thm}[{\cite[Equation (5)]{evans-1996}}]
\label{thm:curr1}
For $1 \leq i \leq n$ and $1 \leq j \leq L$, we have
\[
J_\1(i,j) = (p_1 \cdots p_n - q_1 \cdots q_n) 
\frac{Z_{L-1,n}}{L Z_{L,n}}.
\]
\end{thm}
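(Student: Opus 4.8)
The plan is to compute the current $J_\1(i,j)$ directly from its defining expression by expanding the two stationary expectations over configurations, using $\hat\pi(A) \propto \wt(A)$ from \cref{thm:ssrefined}. By horizontal translation invariance (\cref{prop:dens1}, \cref{rem:dynamics}(1)) we may fix $j$ — say take the horizontal edge between the last column $L$ and the first column $1$. The first term $p_i\langle \tau_{i,L}\sum_k \eta_{k,1}\rangle$ counts (weighted) configurations with a $\1$ in row $i$, column $L$, and a $\2$ in column $1$; since there is a particle in every column and a $\1$ in row $i$ column $L$, the $\2$ in column $1$ automatically lies in some $C_m$, and the forward rate $p_i$ applies to the transition of this $\1$ across the wrap-around edge. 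Symmetrically, the second term with rate $q_i$ counts configurations with $\1$ in row $i$, column $1$, and a $\2$ in column $L$.

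The key step is to reorganize the sum $J_\1(i,L) = \frac{1}{LZ_{L,n}}\big(\text{forward weights} - \text{backward weights}\big)$ by pairing. Fix the relative order of the remaining $\1$'s and the contents of columns $2,\dots,L-1$; this leaves a "gap" that the $\1_i$ either enters from the left (contributing to the $q_i$ term, with a $\2$ in column $L$) or exits to the right (contributing to the $p_i$ term, with a $\2$ in column $1$). The difference of weights for such a matched pair is governed precisely by \cref{lem:2wts}: if the $\2$ adjacent to $\1_i$ across the edge sits in row $i$ itself, we get $p_1\cdots p_n - q_1\cdots q_n$ times the weight of the rest, and otherwise the contributions cancel. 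After this cancellation, what survives is $(p_1\cdots p_n - q_1\cdots q_n)$ times the sum of $\wt$ over restricted configurations on the remaining $L-1$ columns — but one must be careful that the marking/normalization is consistent, which is exactly where the factor $Z_{L-1,n}$ (rather than some other $Z$) comes from. Concretely, deleting the column occupied by $\1_i$ at the edge produces a configuration counted by $Z_{L-1,n}$.

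I would organize the computation as: (1) rewrite $J_\1(i,L)$ as a single sum over $\mathcal{A}_{L,n}$-configurations of signed weights; (2) for each such configuration, identify the unique configuration it is paired with under moving $\1_i$ across the wrap-around edge (this uses that a $\1$ reaches a given site in at most one way from each side, as noted in the proof of \cref{thm:ssrefined}, plus the observation that a forward transition across this edge requires a $\2$ in column $1$ and a backward one a $\2$ in column $L$); (3) apply \cref{lem:2wts} to the paired difference, getting a nonzero contribution $p_1\cdots p_n - q_1\cdots q_n$ exactly when the relevant $\2$ is in row $i$; (4) recognize the residual sum as $Z_{L-1,n}$ after deleting column $L$. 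The main obstacle will be step (2)–(3): handling the nonlocal transitions \cref{it:transp2} and \cref{it:transq2} (the case $k'=k$, when the $\2$ crossing the edge is in row $i$) so that the pairing is genuinely a bijection and that \cref{lem:2wts} applies uniformly, including the edge cases where $C_i$ or the neighboring interval is empty or where the $\1_i$ jumps over a long block $C$. Once the bookkeeping for these nonlocal moves is pinned down, the algebra collapses immediately via \cref{lem:2wts}, and the stated formula follows; independence of $j$ is then automatic by particle conservation (or re-derived by the same argument on any edge).
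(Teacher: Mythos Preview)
Your core idea---reduce to \cref{lem:2wts} and recognize the residual sum as $Z_{L-1,n}$---is exactly the paper's. But the pairing you propose, ``move $\1_i$ across the edge via the actual Markov transition,'' is not a bijection, and this is precisely why the nonlocal moves cause you trouble. Concretely, take $n=2$, $L=4$, $i=1$: both $(\2_2,\1_2,\2_2,\1_1)$ and $(\2_1,\1_2,\2_2,\1_1)$ are forward-term configurations with $\1_1$ in column $4$ and a $\2$ in column $1$; the forward transition of $\1_1$ (of type \eqref{transp1bis} in the first case, type \eqref{transp2bis} in the second) sends both to $(\1_1,\1_2,\2_2,\2_2)$. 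So the map is not injective. Moreover, in the second case the paired weight difference is not of the form $p_i\, w_\2(k,i)-q_i\, w_\2(k,i-1)$ for a common $k$ (the $\2$ that ends up in column $L$ is not in the same row as the one that was in column $1$), so \cref{lem:2wts} does not apply to that pair.

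The fix is to abandon the dynamical pairing altogether: the current $J_\1(i,j)$ is a stationary expectation and never references which transition type fires, only that \emph{some} $\2$ occupies the adjacent column. The paper simply factors each term separately---a configuration with $\1_i$ at column $j$ and a $\2$ at $(k,j{+}1)$ has weight $w_\2(k,i)$ times the weight of the $(L{-}1)$-column configuration obtained by deleting column $j{+}1$, and likewise $w_\2(k,i{-}1)$ for the backward term after deleting column $j$---then subtracts and sums over $k$ via \cref{lem:2wts}. If you prefer a bijection, the naive column swap $j\leftrightarrow j{+}1$ works: it fixes the row $k$ of the $\2$, so \cref{lem:2wts} applies termwise, the $k\neq i$ pairs cancel, and the $k=i$ pairs contribute $(p_1\cdots p_n-q_1\cdots q_n)$ times the weight of the rest. (Minor slip in your step (4): to recover $Z_{L-1,n}$ you delete the column containing the $\2$, not the one containing $\1_i$.)
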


\begin{rem}
We note that the formula for the current in \cite[Equation (5)]{evans-1996} may look superficially different to that in \cref{thm:curr1}.
In particular, it does not have the prefactor $p_1 \cdots p_n - q_1 \cdots q_n$. However, it is actually the same formula, the difference stemming from a different normalization for $Z_{L,n}$ in \cite{evans-1996}.
\end{rem}

We give an alternate proof of \cref{thm:curr1} directly using the exclusion process on the torus.

\begin{proof}
Particle $\1$ moves from site $(i,j)$ to $(i,j+1)$ with rate $p_i$ if there is no $\1$ in the $j+1$'th column, or equivalently, if there is a $\2$ in the $j+1$'th column. Similarly, the $\1$ moves from site $(i,j+1)$ to $(i,j)$ with rate $q_i$ if there is a $\2$ in the $j$'th column. 
Therefore, the current between sites $(i,j)$ and $(i,j+1)$ 
in the stationary distribution depends only on the quantity
\[
\sum_{j=1 }^n \big( p_i w_\2(j,i) - q_i w_\2(j,i-1) \big).
\]
By \cref{lem:2wts}, this is equal to $p_1 \cdots p_n - q_1 \cdots q_n$, which is independent of the configuration.
The weight of the rest of the configuration is the same as if these two columns ($j$ and $j+1$) were deleted, and instead a single column with a $\1$ at position $(i,j)$ was added. Therefore, summing over all configurations gives $Z_{L-1,n}$, completing the proof.
\end{proof}

\begin{rem}
It is instructive to compute this formula in the special case $p_i = p$ and $q_i = q$ for all $i$. From \cref{thm:curr1} and \cref{prop:identical_particles}, the horizontal current of $\1$'s is easily calculated to be
\[
J_\1(1,1) = (p^n-q^n) \frac{L-n}{[n]_{p,q} L(L-1)} = (p-q) \frac{L-n}{L(L-1)}.
\]
Since the current in the one-dimensional ASEP defined in \cref{sec:onedim} is the same, we can calculate it there as well.
Note that in this case the uniform distribution on $\Psi_{L,n}$ is clearly stationary. Then, by a direct calculation, we get
\[
(p-q)\frac{(L-2)!/(L-n-1)!}{L!/(L-n)!},
\]
which gives the same result as above.
\end{rem}

\begin{eg}
We compute the current $J_\1(1,1)$ for $L = 4$ and $n = 2$ using the weights in \cref{eg:ss-42}. The numerator turns out to be
\[
p_1 \left( (p_2 + q_1)^2 + (p_1+q_2) (p_2+q_1) \right)
- q_1 \left( (p_1+q_2) (p_2+q_1) + (p_1+q_2)^2 \right) \!,
\]
which simplifies to 
\[
(p_1 p_2 - q_1 q_2) (p_1 + p_2 + q_1 + q_2),
\]
as expected from \cref{thm:curr1}.
\end{eg}

The total horizontal current $J_\1(i)$ can now be computed directly from 
\cref{thm:curr1}.
\begin{cor}
\label{cor:curr1}
For $1 \leq i \leq n$, we have
\[
J_\1(i) = (p_1 \cdots p_n - q_1 \cdots q_n) 
\frac{Z_{L-1,n}}{Z_{L,n}}.
\]
\end{cor}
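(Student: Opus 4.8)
The plan is to obtain $J_\1(i)$ as the sum of the local currents $J_\1(i,j)$ over all $j$, using \cref{thm:curr1}. By particle conservation, $J_\1(i,j)$ is independent of $j$, so this sum is simply $L$ copies of the common value. Concretely, $J_\1(i) = \sum_{j=1}^L J_\1(i,j) = L \cdot J_\1(i,1)$, and substituting the formula from \cref{thm:curr1} gives
\[
J_\1(i) = L \cdot (p_1 \cdots p_n - q_1 \cdots q_n) \frac{Z_{L-1,n}}{L Z_{L,n}} = (p_1 \cdots p_n - q_1 \cdots q_n) \frac{Z_{L-1,n}}{Z_{L,n}},
\]
which is exactly the claimed identity.

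The only point requiring a sentence of justification is the interpretation of "total horizontal current for the $\1$ in row $i$" as $\sum_{j=1}^L J_\1(i,j)$: this is stated in the paragraph preceding \cref{fig:current-types} ("The total horizontal current for the $\1$ in row $i$ is then the sum over all $j$ of $J_\1(i,j)$ and will be denoted $J_\1(i)$"), so it is a definition rather than something to be proved. The factor of $L$ from summing over the $L$ columns then cancels the $L$ in the denominator of \cref{thm:curr1}.

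There is essentially no obstacle here — the corollary is a one-line consequence of \cref{thm:curr1} together with the $j$-independence of $J_\1(i,j)$ (which itself was already noted in the statement of \cref{thm:curr1} as following "by particle conservation"). The proof I would write is just the displayed computation above preceded by the remark that $J_\1(i,j)$ does not depend on $j$.
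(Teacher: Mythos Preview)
Your proof is correct and matches the paper's approach: the paper simply states that $J_\1(i)$ ``can now be computed directly from \cref{thm:curr1}'' and gives no further argument, so your one-line computation (summing the $j$-independent value $J_\1(i,j)$ over the $L$ columns to cancel the factor of $L$) is exactly the intended justification.
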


We then deduce the horizontal current of $\2$'s crossing columns $j$ and $j+1$, denoted by $J_\2^{\text{h}}(j)$.

\begin{cor}
\label{cor:curr2-h}
For any $j \in [L]$, 
\[
J_\2^{\text{h}}(j) = -n (p_1 \cdots p_n - q_1 \cdots q_n) 
\frac{Z_{L-1,n}}{L Z_{L,n}}.
\]
\end{cor}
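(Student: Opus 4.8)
The plan is to relate the horizontal current of $\2$'s across columns $j$ and $j+1$ to the already-computed current of $\1$'s via a conservation argument across a vertical cut. The key observation is that each column of the torus contains exactly one particle (either a $\1$ or a $\2$), so the total number of particles (of both types together) passing from the block of columns $\{1,\dots,j\}$ to the block $\{j+1,\dots,L\}$ per unit time equals zero in the stationary state: both blocks have a fixed number of columns and hence a fixed total particle count, so there is no net flux of particles across the cut. Therefore the net horizontal flux of $\2$'s across the cut is exactly the negative of the net horizontal flux of $\1$'s across the same cut.

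First I would make precise the notion of horizontal current across the vertical cut between columns $j$ and $j+1$, summed over all rows, for each species. For $\1$'s this cumulative current is $\sum_{i=1}^n J_\1(i,j)$, which by \cref{thm:curr1} equals $n(p_1\cdots p_n - q_1\cdots q_n)\frac{Z_{L-1,n}}{L Z_{L,n}}$ since the per-row current $J_\1(i,j)$ does not depend on $i$ or $j$. Next I would argue that in the stationary distribution the total particle current across the cut vanishes: the generic forward transition of type \cref{it:transp1} moves a $\1$ right one step and a $\2$ left one step (when these lie in different rows), so it contributes $+1$ and $-1$ to the respective fluxes and $0$ net; the transition of type \cref{it:transp2} moves a $\1$ right past a block $C$ and deposits a $\2$, but one checks that across any fixed vertical cut the number of particles crossing rightward equals the number crossing leftward, because the column occupancy pattern is merely shifted. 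More cleanly: since each of the $L$ columns holds exactly one particle and the dynamics only ever permutes the contents of columns (cyclically shifting a contiguous run of columns, or swapping two adjacent columns), the multiset of ``number of particles in columns $1,\dots,j$'' is invariant, forcing zero net particle flux across the cut at stationarity.

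Then the identity $J_\2^{\text{h}}(j) = -\sum_{i=1}^n J_\1(i,j)$ follows, and substituting the value from \cref{thm:curr1} gives exactly the claimed formula $J_\2^{\text{h}}(j) = -n(p_1\cdots p_n - q_1\cdots q_n)\frac{Z_{L-1,n}}{L Z_{L,n}}$. The independence of $j$ is inherited from \cref{thm:curr1}. The main obstacle I anticipate is justifying carefully that transitions of type \cref{it:transp2} and \cref{it:transq2} — the nonlocal ones that shift a whole block of columns and relabel a $\2$ — do not produce a net particle flux across an arbitrary vertical cut; one must track how many columns of the shifted block lie on each side of the cut and verify the rightward and leftward counts match. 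Once that bookkeeping is done the rest is immediate. An alternative, perhaps cleaner, route that avoids this case analysis is to write $J_\2^{\text{h}}(j)$ directly in terms of stationary expectations of occupation variables and weights $w_\2(i,k)$, then invoke \cref{lem:2wts} exactly as in the proof of \cref{thm:curr1}, picking up the overall factor $-n$ from summing the single-species $\1$-contribution over the $n$ rows with the opposite sign coming from $\2$'s moving against the $\1$'s; I would present whichever of these two is shorter.
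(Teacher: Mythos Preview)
Your proposal is correct and follows essentially the same conservation idea as the paper: show that the cumulative horizontal $\2$-current across the cut equals the negative of the total $\1$-current, then sum \cref{thm:curr1} over the $n$ rows. The one difference worth noting is that the paper reaches the identity $J_\2^{\text{h}}(j)=-\sum_i J_\1(i,j)$ via the lumping map $\Pi$ of \cref{prop:lump}: under projection to $\Psi_{L,n}$ every transition (including your worrisome types \cref{it:transp2} and \cref{it:transq2}) becomes a local nearest-neighbour swap $\1_k\,\2\leftrightarrow\2\,\1_k$, so the opposite-current statement is immediate and the bookkeeping you anticipate for the nonlocal moves is entirely bypassed.
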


\begin{proof}
From the fact that the two-dimensional ASEP projects onto the one-dimensional ASEP in \cref{prop:lump}, $J_\2^{\text{h}}(j)$ is the same as the current of $\2$'s in the one-dimensional ASEP between sites $j$ and $j+1$. Clearly, this is the opposite of the total current of $\1$'s. Since our particles are distinguishable in the two-dimensional ASEP, we must sum over $J_\1(i,j)$ for all $i$ using \cref{thm:curr1}. This then proves the result.
\end{proof}

In terms of the stationary distribution, the current of $\2$'s between rows $i$ and $i+1$ is given by
\[
J_\2^{\text{v}}(i) = \sum_{j=1}^L \big( p_i \aver{\eta_{i,j} \tau_{i,j-1} }
- q_i \aver{ \eta_{i,j} \tau_{i,j+1} } \big).
\]
We will define the {\em upward current} $J_\2^{\text{v}+}(i)$ from row $i$ to $i+1$ and the {\em downward current} $J_\2^{\text{v}-}(i)$ from row $i$ to $i-1$ by keeping track of one-sided motions, namely
\begin{equation}
\label{curr2+-}
J_\2^{\text{v}+}(i) = \sum_{j=1}^L p_i \aver{\eta_{i,j} \tau_{i,j-1} }, \quad
J_\2^{\text{v}-}(i) = \sum_{j=1}^L q_i \aver{ \eta_{i,j} \tau_{i,j+1}}.
\end{equation}

\begin{thm}
\label{thm:curr2-v}
We have
\[
J_\2^{\text{v}+}(i) = p_1 \cdots p_n \frac{Z_{L-1,n}}{Z_{L,n}} 
\quad \text{and} \quad
J_\2^{\text{v}-}(i) = q_1 \cdots q_n \frac{Z_{L-1,n}}{Z_{L,n}}. 
\]
In particular, both are independent of $i$.
\end{thm}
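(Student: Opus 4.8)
The plan is to evaluate $J_\2^{\text{v}+}(i)$ and $J_\2^{\text{v}-}(i)$ one column at a time, in the same spirit as the proof of \cref{thm:curr1}: for each fixed column we isolate the single $\2$ responsible for the transition, read off its weight from \eqref{wt-2}, and strip it away by a column-deletion bijection, the remainder being enumerated by the restricted partition function $Z_{L-1,n}$.

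More precisely, fix $i$ and a column $j$. Since $\hat\pi(A)=\wt(A)/(L Z_{L,n})$, the summand $p_i\aver{\eta_{i,j}\tau_{i,j-1}}$ equals $p_i/(L Z_{L,n})$ times the total weight of configurations $A\in\mathcal{A}_{L,n}$ with $A_{i,j-1}=\1$ and $A_{i,j}=\2$. In such an $A$ the $\1$ of row $i$ lies at column $j-1=b_i$, so the $\2$ at $(i,j)$ is the leftmost element of $C_i$; by \eqref{wt-2} its weight is $w_\2(i,i)=p_1\cdots p_{i-1}p_{i+1}\cdots p_n$, hence $p_i\, w_\2(i,i)=p_1\cdots p_n$. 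I would then delete column $j$ — which holds just this one $\2$ — to obtain a configuration in $\mathcal{A}_{L-1,n}$ in which the $\1$ of row $i$ occupies a fixed column (determined by $j$); inserting a fresh column carrying a $\2$ in row $i$ immediately after the $\1$ of row $i$ is the inverse map. This is a bijection precisely because $j$ is fixed: summing over $j$ before deleting would overcount, since on the torus a column just before position $1$ and a column just after position $L$ coincide. Because deleting a column containing no $\1$ leaves every surviving $\2$ between the same pair of consecutive $\1$'s, the weight factors as $\wt(A)=w_\2(i,i)\,\wt(A')$. Finally, horizontal translation invariance shows that the total weight of configurations in $\mathcal{A}_{L-1,n}$ with the $\1$ of row $i$ in any prescribed column is $Z_{L-1,n}$ (the $L-1$ columns split the total mass $(L-1)Z_{L-1,n}$ evenly). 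This gives $p_i\aver{\eta_{i,j}\tau_{i,j-1}}=p_1\cdots p_n\,Z_{L-1,n}/(L Z_{L,n})$, independent of $j$, and summing over all $L$ columns yields $J_\2^{\text{v}+}(i)=p_1\cdots p_n\,Z_{L-1,n}/Z_{L,n}$.

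The formula for $J_\2^{\text{v}-}(i)$ follows by the identical argument with every $p$ and $q$ swapped and the orientation reversed, as permitted by \cref{rem:dynamics}(3): for fixed $i,j$ the relevant configurations now have $A_{i,j}=\2$ and $A_{i,j+1}=\1$, so the $\2$ at $(i,j)$ is the rightmost element of $C_{i-1}$, with weight $w_\2(i,i-1)=q_1\cdots q_{i-1}q_{i+1}\cdots q_n$ (indices mod $n$), giving $q_i\, w_\2(i,i-1)=q_1\cdots q_n$; deleting column $j$ is again a weight-factoring bijection onto $\mathcal{A}_{L-1,n}$ with the $\1$ of row $i$ in a prescribed column, and the same count gives $J_\2^{\text{v}-}(i)=q_1\cdots q_n\,Z_{L-1,n}/Z_{L,n}$. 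Both expressions are manifestly independent of $i$, and $J_\2^{\text{v}+}(i)-J_\2^{\text{v}-}(i)$ consistently recovers the net vertical current behind the Scott Russell phenomenon.

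The only real subtlety — hence the step I would be most careful about — is the column-deletion bijection on the torus: one must delete for a fixed column $j$ (so the image lands in a specific translation class of $\mathcal{A}_{L-1,n}$) and only afterwards sum over $j$, rather than trying to delete ``the column right after the $\1$ of row $i$'' globally, which is genuinely not injective because of the wrap-around. Correctly identifying which interval $C_k$ the transition-carrying $\2$ belongs to, so that \eqref{wt-2} applies verbatim, is the other point to get right; everything else is immediate from \cref{thm:ssrefined} and horizontal translation invariance.
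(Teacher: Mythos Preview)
Your proof is correct and follows essentially the same route as the paper: identify that the transition-carrying $\2$ at $(i,j)$ lies in $C_i$ (resp.\ $C_{i-1}$) with weight $w_\2(i,i)$ (resp.\ $w_\2(i,i-1)$), strip off that column, and recognize the remaining sum as $Z_{L-1,n}$. You are more explicit than the paper about the normalization $\hat\pi(A)=\wt(A)/(LZ_{L,n})$ and about fixing $j$ before deleting and then summing, but the argument is the same.
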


\begin{proof}
A particle $\2$ at site $(i,j)$ can only move upwards if there is a $\1$ at site $(i,j-1)$, and if so, this happens with rate $p_i$. In that case, $(i,j) \in C_i$ and $w_\2(i,i) = p_1 \cdots p_{i-1} p_{i+1} \dots p_n$ by \eqref{wt-2}, which is independent of the configuration. Arguing as in the proof of \cref{thm:curr1}, the weight of the rest of the configuration is the same as if columns $j-1$ and $j$ are replaced by a single column with a $\1$ at position $(k,j-1)$. 
Summing over all such configurations gives $Z_{L-1,n}$ and proves the formula for $J_\2^{\text{v}+}$. The argument for $J_\2^{\text{v}-}$ is entirely analogous.
\end{proof}

\begin{eg}
We compute these currents for $L = 4$ and $n = 2$ using the weights in \cref{eg:ss-42}:
Starting from \eqref{curr2+-}, the contributions to $J_\2^{\text{v}+}(1)$ come from configurations (b), (h), (i) and (j) and sum to
\[
p_2 \left( p_1 q_2 + p_1^2 + p_1 p_2 + p_1 q_1 \right).
\]
Similarly, the contributions to $J_\2^{\text{v}-}(1)$
come from configurations (a), (b), (c) and (d) and give
\[
q_1 \left( q_2^2 + p_1 q_2 + p_2 q_2 + q_1 q_2 \right).
\]
We also have to consider all $4$ rotations in the current computation and then this matches with \cref{thm:curr2-v}.
\end{eg}

From \cref{thm:curr2-v}, we immediately obtain the vertical current of $\2$'s.

\begin{cor}
\label{cor:curr2-v}
The vertical current of $\2$'s is the same as the horizontal current of $\1$'s, i.e.
\[
J_\2^{\text{v}}(i) = J_\1(i) = (p_1 \cdots p_n - q_1 \cdots q_n) 
\frac{Z_{L-1,n}}{Z_{L,n}}.
\]
\end{cor}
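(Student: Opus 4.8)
The plan is to read off Corollary~\ref{cor:curr2-v} as an immediate consequence of Theorem~\ref{thm:curr2-v} and Corollary~\ref{cor:curr1}, the genuine content having already been established there. First I would note that, by the very definitions of the one-sided currents in \eqref{curr2+-}, the net vertical current of $\2$'s across the edges separating rows $i$ and $i+1$ decomposes as
\[
J_\2^{\text{v}}(i) = J_\2^{\text{v}+}(i) - J_\2^{\text{v}-}(i),
\]
since a $\2$ sitting at $(i,j)$ changes rows only via transitions of type \cref{it:transp2} and \cref{it:transq2}: it moves up (contributing with rate $p_i$) precisely when there is a $\1$ at $(i,j-1)$, and down (contributing with rate $q_i$) precisely when there is a $\1$ at $(i,j+1)$, and these possibilities are mutually exclusive. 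This is exactly the difference appearing in the displayed formula for $J_\2^{\text{v}}(i)$ preceding \eqref{curr2+-}.

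Next I would substitute the closed forms from Theorem~\ref{thm:curr2-v}, namely $J_\2^{\text{v}+}(i) = p_1 \cdots p_n\, Z_{L-1,n}/Z_{L,n}$ and $J_\2^{\text{v}-}(i) = q_1 \cdots q_n\, Z_{L-1,n}/Z_{L,n}$, obtaining
\[
J_\2^{\text{v}}(i) = (p_1 \cdots p_n - q_1 \cdots q_n)\frac{Z_{L-1,n}}{Z_{L,n}}.
\]
Comparing with Corollary~\ref{cor:curr1}, which gives precisely this expression for $J_\1(i)$, yields $J_\2^{\text{v}}(i) = J_\1(i)$; in particular both are independent of the row index $i$.

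There is no real obstacle in this last deduction: all the work sits in Theorem~\ref{thm:curr2-v} (the weight $w_\2(i,i) = p_1\cdots p_{i-1}p_{i+1}\cdots p_n$ being configuration-independent, together with the column-collapsing bijection relating restricted configurations on $\mathcal{A}_{L,n}$ to those on $\mathcal{A}_{L-1,n}$) and in Corollary~\ref{cor:curr1} (summing Theorem~\ref{thm:curr1} over columns and using the cancellation of Lemma~\ref{lem:2wts}). The point worth stressing is conceptual rather than computational: the equality $J_\2^{\text{v}}(i) = J_\1(i)$ — the Scott Russell phenomenon — is not enforced by any local conservation law, and the proof makes transparent that the coincidence is an artefact of the common prefactor $p_1\cdots p_n - q_1\cdots q_n$ emerging identically from two a priori unrelated current computations, one horizontal and one vertical.
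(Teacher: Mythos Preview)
Your proposal is correct and follows exactly the paper's approach: the corollary is stated as an immediate consequence of Theorem~\ref{thm:curr2-v}, obtained by subtracting the two one-sided currents and recognizing the result as the expression for $J_\1(i)$ from Corollary~\ref{cor:curr1}. The additional commentary you provide is accurate and helpful, but the mathematical content is identical to the paper's one-line deduction.
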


The equality of the horizontal current of the $\1$'s in \cref{thm:curr1} and the vertical current of the $\2$'s in \cref{cor:curr2-v} is a manifestly two-dimensional phenomenon. We call this the \emph{Scott Russell phenomenon}, for the linkage named after him shown in \cref{fig:linkage} which translates linear motion in one direction to that in a perpendicular direction. 
We have proved the equivalence of these currents combinatorially for the stationary distribution. It would be interesting to understand this out of equilibrium as well.\smallskip

For the sake of completeness, we compute the current of $\2$'s across a horizontal edge.
For the fixed horizontal edge between sites $(i,j)$ and $(i,j+1)$, $J_\2(i)$ is the current of $\2$'s along that edge, just as we defined the current for particles of type $\1$. 
In terms of the stationary distribution, this is given by
\[
J_\2^{\text{h}}(i) = \left\langle \eta_{i,j} \sum_{\substack{k=1 \\ k \neq i}}^n q_k \tau_{k,j+1} \right\rangle
- \left\langle \eta_{i,j+1} \sum_{\substack{k=1 \\ k \neq i}}^n p_k \tau_{k,j} \right\rangle.
\]

\begin{thm}
\label{thm:curr2-h}
Across a horizontal edge at row $i$, the current of $\2$'s across that edge is given by
\[
J_\2^{\text{h}}(i) = (p_1 \cdots p_n - q_1 \cdots q_n) \frac{\aver{\eta_{i,1}}_{L-1,n} (L-1)Z_{L-1,n}}{L Z_{L,n}},
\]
where the density of $\2$'s on the right hand side is calculated in $\mathcal{A}_{L-1,n}$.
\end{thm}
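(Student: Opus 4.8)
The plan is to mimic the combinatorial bookkeeping used in the proof of \cref{thm:curr1} and \cref{thm:curr2-v}, but now tracking the $\2$'s that sit on row $i$ and cross the fixed horizontal edge between columns $j$ and $j+1$. By horizontal translation invariance I may fix $j$ arbitrarily; so set $j=L$ (edge between column $L$ and column $1$). A $\2$ at $(i,L)$ contributes to the rightward part of $J_\2^{\text{h}}(i)$ at rate $q_k$ precisely when there is a $\1_k$ (with $k\neq i$) at column $1$; symmetrically a $\2$ at $(i,1)$ contributes to the leftward part at rate $p_k$ when there is a $\1_k$ ($k\neq i$) at column $L$. The crucial point, as in \cref{thm:curr1}, is that the two adjacent columns involved in the transition carry a combined weight that is configuration-independent: if the $\2$ is on row $i$ and the neighbouring $\1$ is $\1_k$, the two-column weight is $w_\2(i,k)$, and the net contribution summing forward minus backward over the value of $k\neq i$ telescopes via \cref{lem:2wts} to a multiple of $p_1\cdots p_n - q_1\cdots q_n$.

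Concretely, I would write $J_\2^{\text{h}}(i)$ as a sum over configurations in $\mathcal{A}_{L,n}$ normalized so that column $L$ (or column $1$) contains the relevant $\2$ on row $i$, and collapse the pair of columns $\{L,1\}$ into a single column. Unlike in \cref{thm:curr1}, where the collapsed column was forced to contain a $\1$, here the collapsed column must contain a $\2$ on row $i$ — so the sum over the remaining $L-1$ columns is exactly the partition-function-weighted count of configurations in $\mathcal{A}_{L-1,n}$ in which position $(i,\cdot)$ of one designated column holds a $\2$. That count is $\aver{\eta_{i,1}}_{L-1,n}\,(L-1)\,Z_{L-1,n}$: the factor $(L-1)Z_{L-1,n}$ is the full partition function on the size-$(L-1)$ torus, and multiplying by the density $\aver{\eta_{i,1}}_{L-1,n}$ of $\2$'s on row $i$ selects the fraction of that weight supported on configurations with a $\2$ at the marked column on row $i$. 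Dividing by the full partition function $LZ_{L,n}$ of the original system gives the claimed formula.

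The one subtlety — and the step I expect to be the main obstacle — is the appearance of the prefactor $p_1\cdots p_n - q_1\cdots q_n$ rather than, say, $p_1\cdots p_n$ alone, as happened for the one-sided vertical currents in \cref{thm:curr2-v}. Here the horizontal current of $\2$'s is intrinsically a difference of a forward and a backward flux, and I must check that after fixing the $\2$ on row $i$ in the collapsed column, the sum $\sum_{k\neq i}\bigl(q_k\,[\text{weight with }\1_k\text{ to the right}] - p_k\,[\text{weight with }\1_k\text{ to the left}]\bigr)$ really does factor as $(p_1\cdots p_n - q_1\cdots q_n)$ times a configuration-independent constant, so that it pulls out of the sum cleanly. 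This is where \cref{lem:2wts} does the work: one reindexes the $k\neq i$ terms so that the $\2$ weight $w_\2(i,k)$ paired with rate $p_k$ (jump from the left) lines up against $w_\2(i,k-1)$ paired with $q_k$, and the $i=k$ exclusion is exactly what removes the only term of \cref{lem:2wts} that is not zero, leaving the single nonzero contribution $p_1\cdots p_n - q_1\cdots q_n$. Once that telescoping is verified, the rest is the routine column-collapse argument already used twice in this section, and identifying the remaining weighted count with $\aver{\eta_{i,1}}_{L-1,n}(L-1)Z_{L-1,n}$ via \cref{thm:dens2}.
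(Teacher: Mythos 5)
There is a genuine gap: the transitions you track are precisely the ones that contribute nothing to this current. You account only for the local swaps of the $\2$ on row $i$ with an adjacent $\1_k$, $k\neq i$ (transitions \cref{it:transp1} and \cref{it:transq1}). For these the relevant signed sum is
\[
\sum_{\substack{k=1\\ k\neq i}}^{n}\bigl(q_k\,w_\2(i,k-1)-p_k\,w_\2(i,k)\bigr),
\]
and by \cref{lem:2wts} every summand with $k\neq i$ is \emph{individually} zero, so the whole sum vanishes. Your sentence that excluding $k=i$ ``removes the only term of \cref{lem:2wts} that is not zero, leaving the single nonzero contribution $p_1\cdots p_n-q_1\cdots q_n$'' is self-contradictory: removing the only nonzero term leaves $0$, not $p_1\cdots p_n-q_1\cdots q_n$. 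As written, your mechanism yields $J_\2^{\text{h}}(i)=0$.

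The missing idea is that the entire current comes from the nonlocal transitions \cref{it:transp2} and \cref{it:transq2}. If the $\2$ at $(i,j)$ sits in the block between $\1_{k-1}$ (at column $a$) and $\1_k$ (at column $b$), then a forward move of $\1_k$ onto a $\2_k$ at column $b+1$ drags the whole block, hence this $\2$, one step to the right at rate $p_k$; symmetrically, a backward move of $\1_{k-1}$ onto a $\2_{k-1}$ at column $a-1$ drags it left at rate $q_{k-1}$. The paper pairs these two events by an explicit bijection (shifting the $\2$'s in columns $a+1,\dots,b-1$), so that the only weight that differs between the paired configurations gives $p_k w_\2(k,k)-q_{k-1}w_\2(k-1,k-2)=p_1\cdots p_n-q_1\cdots q_n$; only then does a column-deletion argument apply, where the deleted column is the one carrying the extra $\2_k$ (resp.\ $\2_{k-1}$), and the surviving configuration in $\mathcal{A}_{L-1,n}$ still has the $\2$ at $(i,j)$ --- which is exactly what the factor $\aver{\eta_{i,1}}_{L-1,n}(L-1)Z_{L-1,n}$ counts. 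Your final counting step is in the right spirit, but it must be attached to this block-shift mechanism, not to the local swaps.
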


\begin{proof}
There are two types of ways a $\2$ at position $(i,j)$ can cross a horizontal edge. The first is if there is a $\1$ at $(k,j-1)$ or $(k,j+1)$, $k \neq i$ and a transition of type \cref{it:transp1} or \cref{it:transq1} takes place. The second is if this $\2$ is between $\1$'s at rows $k-1$ and $k$ and a transition of type \cref{it:transp2} or \cref{it:transq2} takes place. We will first show that the net current from the first type of transitions is zero.

For the first type, the $\2$ at position $(i,j)$ can move to $(i,j+1)$ is if there is a $\1$ in column $j+1$ which is not in row $j$ and which moves backward. If this $\1$ belongs to the $k$'th row, then that $\2$ belongs to $C_{k-1}$, and this transition happens with rate $q_k$. Similarly, the backward current happens with rate $p_k$ if there is a $\1$ in the $j$'th column and row $i \neq k$, in which case that $\2$ belongs to $C_k$.
Both these transitions are clearly unaffected by the configurations in columns other than $j$ and $j+1$. Therefore, using \eqref{wt-2}, we need to compute
\[
\sum_{\substack{ k=1 \\ k \neq i}}^n \big( q_k w_\2(i,k-1) -
 p_k w_\2(i,k) \big).
\]
By \cref{lem:2wts}, this is zero.

For the second type, assume that the $\2$ at position $(i,j)$ is between $\1$'s at positions $(k-1,a)$ and $(k,b)$. This $\2$ moves horizontally to the right with rate $p_k$ if there is also a $\2$ at position $(k,b+1)$. Similarly, it moves horizontally to the left with rate $q_{k-1}$ if there is also a $\2$ at position $(k-1,a-1)$. We can now make a bijection between  configurations contributing to the rightward and leftward movement of the $\2$ at $(i,j)$ as shown in \cref{fig:bijection-curr2}.
The positions of the $\2$'s in columns $a+1,\dots,b-1$ in the configuration on the left move right by one step as we go to the configuration on the right; all other positions outside this view are unchanged. 

\begin{center}
\begin{figure}[h!]
\includegraphics[scale=0.65]{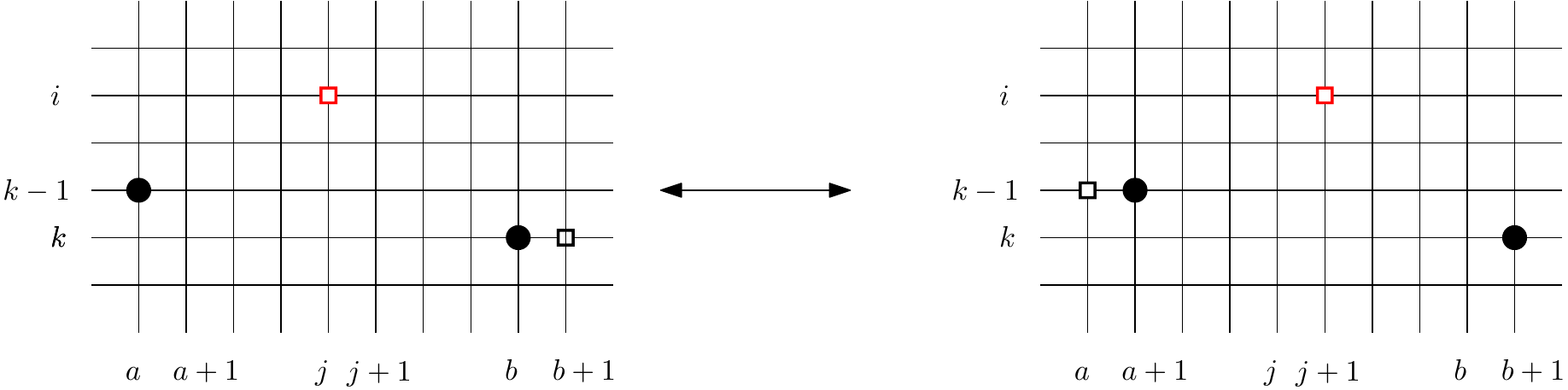}
\caption{A bijection between contributions to the right and left current of the $\2$ at $(i,j)$.}
\label{fig:bijection-curr2}
\end{figure}
\end{center}

We now compute the difference in the forward and backward contributions in this pair of configurations. The only weight that changes is that of the $\2$ at $(k,b+1)$ on the left and $(k-1,a)$ on the right of \cref{fig:bijection-curr2}.
Leaving aside the other weights for the moment, we get
\[
p_k w_\2(k,k) - q_{k-1} w_\2(k-1,k-2) = p_1 \cdots p_n - q_1 \cdots q_n.
\]
We now complete the proof using ideas similar to that of \cref{thm:dens2}.
We can delete the column $b+1$ on the left or $a$ on the right to obtain the same configuration in $\mathcal{A}_{L-1,n}$ where a $\2$ is at position $(i,j)$. Summing over all such configurations gives us 
$\aver{\eta_{i,1}}_{L-1,n} (L-1)Z_{L-1,n}$, proving the formula.
\end{proof}

\section*{Acknowledgements}

This project was initiated at the occasion of the S\'eminaire de combinatoire de Lyon \`a l'ENS, which is supported by Labex MILYON/ANR-10-LABX-0070.
The first author (AA) was partially supported by the UGC Centre for Advanced Studies and by Department of Science and Technology grant EMR/2016/006624. The second author (PN) was supported by the French ANR grant COMBIN\'E (19-CE48-0011).

\bibliographystyle{alpha}
\bibliography{ssep}

\end{document}